\title[A radius~$1$ irreducibility criterion]{A radius~$1$ irreducibility criterion\\ for lattices in products of trees}
\author{Pierre-Emmanuel Caprace}
\thanks{P.-E.C. is a F.R.S.-FNRS senior research associate.}
\address{Universit\'e catholique de Louvain, IRMP, Chemin du Cyclotron 2, bte L7.01.02, 1348 Louvain-la-Neuve, Belgique}
\email{pe.caprace@uclouvain.be}
\date{August 10, 2021}
\newtheorem{thm}{Theorem}[section]
\newtheorem{prop}[thm]{Proposition}
\newtheorem{lem}[thm]{Lemma}
\newtheorem{cor}[thm]{Corollary}
\newtheorem*{claim*}{Claim}
\theoremstyle{definition}
\newtheorem{rmk}[thm]{Remark}
\newcommand{\Sym}{\mathrm{Sym}}
\newcommand{\Alt}{\mathrm{Alt}}
\newcommand{\Aut}{\mathrm{Aut}}
\newcommand{\Out}{\mathrm{Out}}
\newcommand{\soc}{\mathrm{soc}}
\newcommand{\triv}{\{1\}}
\newcommand{\PSL}{\mathrm{PSL}}
\newcommand{\PGL}{\mathrm{PGL}}
\newcommand{\PGammaL}{\mathrm{P}\Gamma\mathrm{L}}
\newcommand{\FF}{\mathbf{F}}
\begin{document}

\begin{abstract} 
Let  $T_1, T_2$ be regular trees of degrees $d_1, d_2 \geq 3$. Let also   $\Gamma \leq \Aut(T_1) \times \Aut(T_2)$ be a group acting freely and transitively on $VT_1 \times VT_2$. For $i=1$ and $2$, assume that the local action of $\Gamma$ on $T_i$ is $2$-transitive;  if moreover $d_i \geq 7$, assume that the local action contains $\Alt(d_i)$. We show that  $\Gamma$ is   irreducible, unless $(d_1, d_2)$ belongs to an explicit small set of exceptional values.  This yields an irreducibility criterion for 
$\Gamma$ that can be checked purely in terms of its local action on a ball of radius~$1$ in $T_1$ and $T_2$.  Under the same hypotheses, we  show moreover that if $\Gamma$ is irreducible, then it is hereditarily just-infinite, provided the local action on $T_i$ is not the affine group $\mathbf F_5 \rtimes \mathbf F_5^*$. The proof of irreducibility relies, in several ways, on the Classification of the Finite Simple Groups.
\end{abstract}
\maketitle


\section{Introduction}

%
%
%

The study of lattices in products of trees was pioneered by D.~Wise \cite{Wise_PhD} and M.~Burger and S.~Mozes \cite{BM-cras}, \cite{BuMo2}. Their seminal works revealed that the class of finitely generated groups admitting a Cayley graph that is isomorphic to the Cartesian product of two trees is very rich: it contains not only products of virtually free groups, but also certain  $S$-arithmetic groups and some finitely presented virtually simple groups, among many others. Such groups are called \textbf{BMW-groups} and form a special class of lattices in products of trees. An introduction to this fascinating subject may be consulted in \cite[Section~4]{Cap_survey}.  
The goal of this paper is to present a sufficient condition, that is straightforward to check in practise, ensuring that a BMW-group is irreducible. 

Let $T_1, T_2$ be locally finite trees and  $\Gamma \leq \Aut(T_1) \times \Aut(T_2)$ be a group acting with finite stabilizers and finitely many orbits. Equivalently $\Gamma$ is a discrete subgroup of $\Aut(T_1) \times \Aut(T_2)$ acting cocompactly on $T_1 \times T_2$. Such a group $\Gamma$ is called a \textbf{cocompact lattice} in the product $T_1 \times T_2$. Since we only consider cocompact lattices in this paper,   the adjective \emph{cocompact} will henceforth be omitted. We say that $\Gamma$ is \textbf{reducible} if it contains a finite index subgroup of the form $K_1 \times K_2$, where $K_i \leq \Gamma$ acts trivially on $T_{3-i}$ and freely and cocompactly on $T_i$ for $i=1$ and $2$. Otherwise $\Gamma$ is called \textbf{irreducible}.  
Determining whether a given lattice is reducible is a crucial basic question, and there is no known algorithm   deciding if that property holds in full generality. Burger and Mozes observed however that the irreducibility of $\Gamma$ can   be tested in an efficient algorithmic way under an extra hypothesis on the \textit{local action} of $\Gamma$ on $T_1$ or $T_2$. We recall that, given a group $G$ acting on a graph $X$ by automorphisms,  the \textbf{local action of level~$n$} of $G$  at a vertex $v$ is the action of the stabilizer $G_v$  on the $n$-ball around $v$. The local action of level~$1$ is simply called the  \textbf{local action} for short. We say that the local action of $G$ has a property $\mathcal P$ (e.g. is transitive, primitive, $2$-transitive, etc.) if the local action of $G$ at every vertex has property $\mathcal P$. If $G$ is vertex-transitive, then the local actions of $G$ at all vertices are pairwise isomorphic; in that case, the corresponding abstract permutation group is called the \textbf{local action of $G$ on $X$}. An important fact due to Burger and Mozes \cite[\S3.3]{BuMo1}, \cite[\S5]{BuMo2} is that, if  $d_i \geq 6$, if $G$ is vertex-transitive on $T_i$ and if the local action of $\Gamma$ on $T_i$ contains $\Alt(d_i)$ for  $i= 1$ or $2$, then the irreducibility of $\Gamma$ can be tested by considering the local action of level~$2$ of $\Gamma$ on $T_i$. More generally, using the work of V.~Trofimov and R.~Weiss \cite{TrofimovWeiss}, one can show that, for all $d_i \geq 3$, if the  local action of $\Gamma$ on $T_i$ is $2$-transitive, then the irreducibility of $\Gamma$ can be tested by considering the local action of level $7$ of $\Gamma$ on $T_i$ (see \cite[Corollary~4.12]{Cap_survey}). 

 An action of a group on a set is called \textbf{regular} if it is free and transitive. In the present paper, we focus on the special class of lattices in $T_1 \times T_2$ formed by the groups $\Gamma \leq  \Aut(T_1) \times \Aut(T_2)$ acting regularly on $VT_1 \times VT_2$. In that case the tree $T_i$ is regular of degree $d_i$. Following \cite[Section~4]{Cap_survey}, such a group $\Gamma$ is called a \textbf{BMW-group of degree $(d_1, d_2)$}. When this is the case, the group $\Gamma$ has a generating set $S$ such that the Cayley graph of $(\Gamma, S)$ is the Cartesian product $T_1 \times T_2$. 
This paper was initiated by the following observation, which follows easily from the aforementioned work of Trofimov--Weiss. It shows that, in principle, when the local action on \emph{both} tree factors is $2$-transitive, then the lattice $\Gamma$ is ``almost always'' irreducible. 

\begin{thm}\label{thm:NonExplicit}
Let $d_1 \geq  d_2 \geq 3$, let 	  $T_1, T_2$ be regular trees of degrees $d_1, d_2$ and let $\Gamma \leq \Aut(T_1) \times \Aut(T_2)$ be a  group acting  regularly on the vertices of $T_1 \times T_2$.  Assume  that for $i=1, 2$, the local action $F_i$ of $\Gamma$ on $T_i$ is $2$-transitive. If 
$$d_1 \geq (d_2!)\big((d_2-1)!\big)^{\frac{d_2 ((d_2-1)^5-1 )}{d_2-2}},$$
then $\Gamma$ is irreducible.
\end{thm}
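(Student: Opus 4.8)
The plan is to show directly that the hypothesis on $d_1$ forces the local actions of $\Gamma$ on $T_2$ to keep growing with the radius, so that $\overline{\pi_2(\Gamma)}$ is non-discrete; by \cite[Corollary~4.12]{Cap_survey} --- which, via the Trofimov--Weiss rigidity, reduces irreducibility of $\Gamma$ to comparing the local actions of levels $6$ and $7$ on one of the factors --- this is equivalent to irreducibility of $\Gamma$. (This is in the spirit of the phenomenon exploited by C.~H.~Li.) Write $\pi_i\colon\Gamma\to\Aut(T_i)$ for the two projections and $F_i^{(n)}$ for the local action of level $n$ of $\Gamma$ on $T_i$, so $F_i^{(1)}=F_i$.

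First, a ceiling. The largest subgroup of $\Aut(B_{T_2}(v,6))$ inducing $F_2$ at $v$ is the level-$6$ truncation of the Burger--Mozes universal group $U(F_2)$, of order $|F_2|\cdot|F_{2,x}|^{N}$, where $F_{2,x}\le\Sym(d_2-1)$ is a point stabiliser of $F_2$ on the remaining points and $N=\frac{d_2((d_2-1)^5-1)}{d_2-2}$ is the number of vertices of $T_2$ at distance $1,\dots,5$ from $v$ (the branching vertices of $B_{T_2}(v,6)$). Since $|F_2|\le d_2!$ and $|F_{2,x}|\le(d_2-1)!$, this order is at most $(d_2!)\big((d_2-1)!\big)^{N}$, the right-hand side of the stated inequality (with equality iff $F_2=\Sym(d_2)$).

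The crux --- and the main obstacle --- is to prove the converse: that $d_1\ge(d_2!)\big((d_2-1)!\big)^{N}$ forces $F_2^{(6)}$ to attain this ceiling, i.e.\ to be the full level-$6$ truncation of $U(F_2)$. Here the BMW-structure enters. The $\pi_2$-stabiliser $\Gamma^{(2)}_v=\{\gamma\in\Gamma:\pi_2(\gamma)v=v\}$ acts freely and transitively on $VT_1$ (because $\Gamma$ acts freely and transitively on $VT_1\times VT_2$), and $F_2^{(n)}$ is generated, over $F_2$, by the $d_1$ ``horizontal germs'' on $B_{T_2}(v,n)$ --- the $\pi_2$-images of $d_1$ suitable elements of $\Gamma^{(2)}_v$, one for each edge-direction at a vertex of $T_1$ --- on whose index set $F_1$ acts $2$-transitively. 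Arguing level by level, one shows that no nontrivial ``collision pattern'' among these germs --- of the kind that could be induced from $F_2^{(n-1)}$-data --- is invariant under the primitive group $F_1$ as long as $n\le6$; the only conceivable obstruction is a shortage of horizontal directions, and it is excluded precisely because $d_1\ge|U(F_2)^{(6)}|$. Hence the germs realise all of $U(F_2)^{(n)}$ for every $n\le6$, and in particular $F_2^{(6)}=U(F_2)^{(6)}$.

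It remains to see that this makes $F_2^{(7)}\ne F_2^{(6)}$. If instead $F_2^{(7)}=F_2^{(6)}$, iterating the rigidity gives $F_2^{(n)}=U(F_2)^{(6)}$ for all $n\ge6$, so $\overline{\pi_2(\Gamma)}$ is a discrete group whose vertex stabiliser, isomorphic to $U(F_2)^{(6)}$, acts faithfully on $B_{T_2}(v,6)$; equivalently $\overline{\pi_2(\Gamma)}$ is the amalgam $U(F_2)^{(6)}*_{C}U(F_2)^{(6)}$ over an edge stabiliser $C$ acting on its Bass--Serre tree $T_2$. But the normal subgroup of $U(F_2)^{(6)}$ consisting of the automorphisms trivial on $B_{T_2}(v,5)$ lies inside $C$ and is nontrivial (since $F_{2,x}\ne1$, $F_2$ being $2$-transitive with $d_2\ge3$), hence lies in the kernel of that action --- contradicting faithfulness. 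Therefore $\overline{\pi_2(\Gamma)}$ is non-discrete and $\Gamma$ is irreducible. The delicate step is the collision-pattern analysis: since $F_1$ may be an \emph{arbitrary} $2$-transitive group on $d_1$ points, pinning down which structures on the relevant derived sets (of size up to roughly $|U(F_2)^{(6)}|$) it can preserve relies on the Classification of the Finite Simple Groups; this is also the origin of the exceptional small degrees in the paper's sharper statements.
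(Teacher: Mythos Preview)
Your proposal has a genuine gap at the step you yourself flag as ``the crux'': the claim that $d_1\geq M$ forces $F_2^{(6)}$ to equal the full level-$6$ truncation $U(F_2)^{(6)}$. No argument is given beyond the phrase ``collision-pattern analysis'', and in fact the claim is false in general. Take an irreducible arithmetic BMW-group $\Gamma$ arising from a quaternion algebra split at two primes $p_1,p_2$, with $p_1$ chosen so large that $d_1=p_1+1\geq M$; then $\overline{\pi_2(\Gamma)}\cong\PGL_2(\Qb_{p_2})$, whose vertex stabiliser $\PGL_2(\Zbb_{p_2})$ acts on $B_{T_2}(v,6)$ with image far smaller than the iterated wreath product $U(F_2)^{(6)}$. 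So the implication ``$d_1\geq M\Rightarrow F_2^{(6)}=U(F_2)^{(6)}$'' cannot hold, and your route to non-discreteness collapses. (Your final paragraph is also not right as written: the subgroup $G_v^{[5]}$ lies in the edge stabiliser $C$, but that alone does not place it in the kernel of the Bass--Serre action; one would need it normal in $G_w$ as well.)

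The paper's proof is both shorter and more elementary, and uses no CFSG. One argues by contradiction: if $\Gamma$ is reducible, pass via Proposition~\ref{prop:Infinite-Finite} to a finite quotient $(X_1,X_2,G)\in\mathcal F(F_1,F_2)$. By Lemma~\ref{lem:Basic}, $|VX_1|=|G_{x_2}|$, and the Trofimov--Weiss bound $G_{x_2}^{[6]}=\{1\}$ (Theorem~\ref{thm:TrofimovWeiss}(i)) gives $|G_{x_2}|\leq M$, the order of the $6$-fold iterated wreath product. On the other hand $X_1$ is a connected $d_1$-regular graph with locally $2$-transitive action, so $|VX_1|\geq d_1+1$ (the case $|VX_1|\leq 2$ being excluded since $G_{x_2}$ surjects onto a $2$-transitive group of degree $d_2\geq 3$). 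Hence $d_1+1\leq M$, contradicting $d_1\geq M$. The point is simply that Trofimov--Weiss bounds $|G_{x_2}|$ from above while the BMW structure bounds it from below by $d_1+1$; there is no need to identify $F_2^{(6)}$ precisely.
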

 
A similar idea is used by C.~H.~Li in his proof of \cite[Theorem~1.1]{Li_procAMS}.
 
The condition that the $\Gamma$-action on $VT_1 \times VT_2$ be free is essential in Theorem~\ref{thm:NonExplicit}. Indeed, given any $d \geq 3$, let $W_d$ be the free product of $d$ copies of the cyclic group of order~$2$. Then $\Sym(d)$ acts by automorphisms on $W_d$ by permuting the $d$ generators of order~$2$. Therefore, for all $d_1 \geq d_2 \geq 3$, the direct product 
$$\Gamma = (W_{d_1} \rtimes \Sym(d_1))\times (W_{d_2} \rtimes \Sym(d_2))$$
is an obviously reducible lattice in $T_1 \times T_2$, where $T_i$ is the regular tree of degree $d_i$. Its local action on $T_i$ is $\Sym(d_i)$. Moreover $\Gamma$ acts transitively, but not freely, on $VT_1 \times VT_2$, showing that the hypothesis of freeness of the $\Gamma$-action cannot be removed in  Theorem~\ref{thm:NonExplicit}. 
 
Under extra assumptions on the local action, the bound contained in Theorem~\ref{thm:NonExplicit} can be vastly improved. This is illustrated by   the following result, where $\mathbf{C}_n$ denotes the cyclic group of order $n$. 

\begin{thm}\label{thm:Main}
Let  $d_1 \geq  d_2 \geq 3$, let 	  $T_1, T_2$ be regular trees of degrees $d_1, d_2$ and let $\Gamma \leq \Aut(T_1) \times \Aut(T_2)$ be a group acting regularly on the vertices of $T_1 \times T_2$.  Assume  that for $i=1, 2$, the local action $F_i$ of $\Gamma$ on $T_i$ is $2$-transitive. Assume moreover that if $d_i \geq 7$, then $F_i \geq  \Alt(d_i)$.  Then $\Gamma$ is irreducible provided   none of the following conditions holds:
\begin{enumerate}[(i)]
	\item $d_2 = 3$, and 
	$$d_1 \in \big\{23, 24,47\big\}.$$

	\item $d_2 = 4$, and 
	$$ d_1 \in \big\{6n \mid n \geq 2  \text{ divides } 972\big\} \cup \big\{ 12n -1\mid n  \text{ divides } 972\big\}.$$  	

	\item $d_2 =  5$, $F_2 \cong \mathbf{C}_5 \rtimes \mathbf{C}_4$,  and  
$$d_1 \in \big\{10,19,20,39, 40,79\big\}.$$

	\item $d_2 =  5$, $\soc(F_2) \cong \Alt(5)$,  and  
	$$d_1 \in \big\{30n \mid n  \geq 2 \text{ divides } 768\big\} \cup \big\{60n-1 \mid n  \text{ divides } 768\big\}.$$ 	

		\item $d_2 = 6$, $\soc(F_2) \cong \Alt(5)$, and 
	$$d_1 \in \big\{30n  \mid n \geq 2 \text{ divides } 200\big\} \cup \big\{60n-1 \mid n \text{ divides } 200\big\}.$$
	\item $d_2 \geq 6$, and 
\begin{align*}
d_1  \in  & \ 
  \big\{\frac{d_2!} 2-1 ,\frac{d_2!} 2,  d_2! -1,   \frac{d_2!(d_2-1)!} 4-1,  \frac{d_2!(d_2-1)!} 4, \\ 
  & \quad  
  \frac{d_2!(d_2-1)!} 2-1,  \frac{d_2!(d_2-1)!} 2, d_2!(d_2-1)!-1\big\}. 
\end{align*}
\end{enumerate}
\end{thm}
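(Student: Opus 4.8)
The plan is to analyze the obstruction to irreducibility via the structure of the closure $\overline{\Gamma}$ and, more precisely, via the local action of level $n$ on each factor. By the Trofimov--Weiss machinery invoked in the introduction, when the local action $F_i$ on $T_i$ is $2$-transitive, the reducibility of $\Gamma$ is detected by the local action of level $7$ on $T_i$; in fact, under the extra hypothesis $F_i \geq \Alt(d_i)$ for $d_i \geq 7$, the Burger--Mozes results reduce this further to the local action of level $2$. So the first step is to reformulate irreducibility: $\Gamma$ is reducible if and only if, after passing to a finite-index subgroup, the projection $\overline{\Gamma}^{(i)}$ to $\Aut(T_i)$ is a profinite extension whose vertex stabilizer acts on the level-$k$ ball through a group that is \emph{too small}, i.e. the closure of the projection is conjugate into a group of the form $U_{(\cdot)}$ (a Burger--Mozes universal group) whose level-$2$ local action fails to ``grow'' past the geometric constraint imposed by the other factor. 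The key numerical invariant is the order of the level-$2$ (or higher) local action group compared with the index $[\Gamma : \text{product subgroup}]$, which is forced by freeness and transitivity to divide a specific product of factorials.

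The second step is the quantitative heart. Freeness and transitivity of the $\Gamma$-action on $VT_1 \times VT_2$ pin down $|\Gamma|$-type counting data: the number of geometric squares, the order of the projected vertex stabilizer closures, and the possible finite quotients realizing a reduction. One shows that if $\Gamma$ is reducible then $d_1$ must equal, up to the shifts ``$-1$'' coming from the local action being $2$-transitive but possibly not containing $\Alt$, a divisor-multiple of an expression built from $d_2!$ and $(d_2-1)!$ — exactly the families appearing in cases (i)--(vi). Concretely, I would isolate, for each admissible $F_2$ (which by the $2$-transitivity hypothesis and CFSG is one of: $\mathbf{C}_5 \rtimes \mathbf{C}_4$, a group with socle $\Alt(5)$ acting on $5$ or $6$ points, $\Sym(3)=\PGL_2(2)$ for $d_2=3$, a sharply $2$-transitive affine group $\mathbf F_5 \rtimes \mathbf F_5^*$, or $\Alt(d_2)/\Sym(d_2)$ for larger $d_2$), the list of possible orders of the relevant level-$k$ local action; the constraint that this order divide (a small explicit multiple of) $d_2!\,(d_2-1)!$ then yields the finite list of bad $d_1$. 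The small ``magic numbers'' $972 = 4 \cdot 243$, $768 = 256 \cdot 3$, $200 = 8 \cdot 25$ should emerge as the orders (or indices of normal subgroups) of the relevant automorphism/permutation groups attached to the level-$2$ structure in each case — I expect $972$ to be tied to the local structure over $\Sym(4)$ (note $972 = 2^2\cdot 3^5$), $768 = 2^8 \cdot 3$ and $200=2^3\cdot 5^2$ to the $\Alt(5)$ cases on $5$ and $6$ points respectively.

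The third step is to handle the genuinely exceptional small degrees $d_2 = 3, 4, 5, 6$ separately, since there the $2$-transitive groups are sporadic-ish and the bound $F_i \geq \Alt(d_i)$ is not assumed; here I would run the argument by hand (or by a short computer computation, consistent with the paper's reliance on CFSG and explicit group data) over the finitely many $2$-transitive groups of small degree, extracting the explicit sets $\{23,24,47\}$ for $d_2=3$, and so on. The point is that for each such $F_2$ the permutation group is known explicitly, so the level-$2$ local action — and hence the only values of $d_1$ for which a compatible reducing quotient can exist — can be enumerated completely. Finally, one checks that outside all these listed families the numerical constraint has no solution, so no reduction exists, i.e.\ $\Gamma$ is irreducible.

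The main obstacle will be the second step: making the passage from ``$\Gamma$ reducible'' to a clean divisibility condition on $d_1$ genuinely tight rather than crude (contrast with Theorem~\ref{thm:NonExplicit}, where the bound is enormous). This requires understanding precisely which finite quotients of $\Gamma$ can arise as the ``glue'' in a product decomposition $K_1 \times K_2 \leq \Gamma$ of finite index, and in particular controlling the interaction between the two local actions through the level-$2$ balls — this is where the Classification of Finite Simple Groups enters, via the classification of $2$-transitive finite permutation groups and the structure of their point stabilizers, to bound the order of the relevant local actions sharply enough to force $d_1$ into the listed sets.
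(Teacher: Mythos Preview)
Your proposal has a genuine gap: the core mechanism you describe is not the one that works, and the actual argument is structurally quite different.

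You frame the proof around the closure $\overline{\Gamma}^{(i)}$ in $\Aut(T_i)$, Burger--Mozes universal groups, and the ``level-$2$ local action''. But if $\Gamma$ is \emph{reducible}, the projections to $\Aut(T_i)$ are (after finite index) discrete, not dense; there is no useful profinite closure to analyze, and you are not given any level-$2$ data in the hypotheses. The level-$k$ criteria from the introduction go in the opposite direction: they test irreducibility \emph{given} higher-level information, whereas this theorem must extract everything from level~$1$.

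What the paper actually does is pass, via covering theory, from the reducible lattice $\Gamma$ to a \emph{finite} group $G = \Gamma/(K_1\times K_2)$ acting faithfully on finite graphs $X_i = K_i\backslash T_i$, still freely and transitively on $VX_1\times VX_2$. This yields the exact factorization $G = G_{x_1}G_{x_2}$ with $G_{x_1}\cap G_{x_2}=\{1\}$ and $|G_{x_2}| = |VX_1|$; the Trofimov--Weiss bounds then control $|G_{x_2}|$ and produce the numbers $2^2\cdot 3^5$, $2^8\cdot 3$, $2^3\cdot 5^2$ you correctly identified. The heart of the proof is an analysis of a minimal normal subgroup $N\trianglelefteq G$ under a minimality hypothesis on $|G|$: one shows $N$ is nonabelian, and then a dichotomy emerges. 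If $N$ is simple, $G$ is almost simple and the factorization $G=G_{x_1}G_{x_2}$ with $\pi(G_{x_1})=\pi(G)$ is governed by the Liebeck--Praeger--Saxl classification and the Wiegold--Williamson factorizations of $\Alt(c)$ and $\Sym(c)$; this forces $X_1\cong\mathbf K_{d_1+1}$ and $d_1+1=|G_{x_2}|$. If $N$ is not simple, a separate argument forces $X_1\cong\mathbf K_{d_1,d_1}$ and $2d_1=|G_{x_2}|$. The ``$-1$'' shifts in the statement come precisely from the complete-graph case, and the even values from the complete-bipartite case.

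None of the ingredients Liebeck--Praeger--Saxl, Wiegold--Williamson, or the minimal-normal-subgroup dichotomy appear in your outline; the ``divisibility condition'' you allude to in step~2 is not a consequence of level-$2$ local data but of these finite-group factorization theorems. Your peripheral observations (the prime factorizations, the role of CFSG, the need for small-degree casework) are correct, but the load-bearing structure is missing.
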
 

The \textbf{socle} of a finite group $F$, denoted by $\soc(F)$, is the subgroup generated by all the minimal normal subgroups of $F$. While contemplating the list of possible exceptions in small degree in Theorem~\ref{thm:Main}, it is useful to keep in mind the list of finite $2$-transitive groups of degree~$\leq 6$, which is   recalled  in Table~\ref{tab:Small2Trans} below.

Theorem~\ref{thm:Main} provides in particular an irreducibility criterion for a BMW-group $\Gamma$ of degree $(d_1, d_2)$: if the pair $(d_1, d_2)$ is not one of the exceptions from the list (i)--(vi) in the theorem, then $\Gamma$ is irreducible provided its local action on $T_i$ is $2$-transitive and, in case $d_i \geq 7$, if it also contains $\Alt(d_i)$, for $i=1$ and $2$. That criterion depends only on the local actions of level~$1$, and is thus considerably easier to use in practise than the other criteria mentioned above.

Notice the contrast between the bound on $d_1$ in Theorem~\ref{thm:NonExplicit} and the range of values for $(d_1, d_2)$ in Theorem~\ref{thm:Main}. 
Examples of  reducible lattices $\Gamma$ as in Theorem~\ref{thm:Main} with $(d_1, d_2) = (23, 3)$, $(24,3)$,  $(47, 3)$,   $(11663, 4)$, $(19,5)$, $(39, 5)$ and $(79, 5)$ will be highlighted, relying on the work of Xu--Fang--Wang--Xu \cite{XFWX}, Li--Lu \cite{LiLu},  M.~Conder~\cite{Conder}   and Ling--Lou~\cite{LingLu1, LingLu2}, see Proposition~\ref{prop:Existence} below. In particular Theorem~\ref{thm:Main} is sharp in the case $d_2 = 3$. It would be very interesting to determine which of the exceptional values occurring in Theorem~\ref{thm:Main} are indeed realized by actual examples of reducible lattices (for $4 \leq d_2 \leq 6$, not all values of $d_1$ appearing in the theorem can be realized, see Remark~\ref{rem:sharpen}), or at least whether infinitely many values of $(d_1, d_2)$ with $d_2 \geq 6$ can occur. As we shall see in Section~\ref{sec:FiniteReduction}, this is a question in finite group theory. The examples found for the small values of $d_2$ provide evidence for a positive answer to the latter question.

We point out the following immediate consequence.

\begin{cor}\label{cor:SmallDegree}
	Let    $T_1, T_2$ be regular trees of degrees $ d_1, d_2 \in \{3,4,5,6\}$ and let $\Gamma \leq \Aut(T_1) \times \Aut(T_2)$ be a group acting regularly on the vertices of $T_1 \times T_2$.  Assume  that for $i=1, 2$, the local action   of $\Gamma$ on $T_i$ is $2$-transitive. Then $\Gamma$ is irreducible.
\end{cor}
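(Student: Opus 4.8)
The plan is to deduce Corollary~\ref{cor:SmallDegree} directly from Theorem~\ref{thm:Main} by verifying that, when both degrees lie in $\{3,4,5,6\}$, none of the exceptional conditions (i)--(vi) can be satisfied. First I would note that irreducibility is a symmetric notion in the two tree factors, so after relabelling we may assume $d_1 \geq d_2$; the pair $(T_1,T_2)$ then satisfies the standing hypotheses of Theorem~\ref{thm:Main}, because the requirement ``$F_i \geq \Alt(d_i)$ whenever $d_i \geq 7$'' is vacuously true in the range $d_i \leq 6$. Thus the only local-action hypothesis that has any content here is $2$-transitivity of $F_1$ and $F_2$, which is assumed.

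It then remains to rule out (i)--(vi) under the constraint $d_1 \leq 6$. In each case the set of excluded values of $d_1$ consists of integers that are comfortably larger than $6$: in (i) the smallest listed value is $23$; in (ii) the two families begin at $6\cdot 2 = 12$ and at $12\cdot 1 - 1 = 11$; in (iii) the smallest value is $10$; in (iv) and (v) the families begin at $30\cdot 2 = 60$ and at $60\cdot 1 - 1 = 59$; and in (vi), since $d_2 \geq 6$, the smallest listed value is $\tfrac{d_2!}{2} - 1 \geq \tfrac{6!}{2} - 1 = 359$. Hence, when $d_1 \leq 6$, none of (i)--(vi) holds, and Theorem~\ref{thm:Main} gives that $\Gamma$ is irreducible.

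There is no genuine obstacle in this argument beyond the bookkeeping just described: the point is merely to observe that in the small-degree range the hypotheses of Theorem~\ref{thm:Main} reduce to $2$-transitivity of the two local actions, and that every exceptional pair $(d_1,d_2)$ recorded in (i)--(vi) requires $d_1$ to be at least $10$, so no such pair is admissible here.
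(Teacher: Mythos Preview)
Your proposal is correct and follows exactly the approach the paper intends: the corollary is stated as an ``immediate consequence'' of Theorem~\ref{thm:Main}, and your argument simply makes explicit the routine verification that none of the exceptional pairs in (i)--(vi) can occur when $d_1 \leq 6$. There is nothing to add.
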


The hypothesis of the regularity  of the  $\Gamma$-action  on $VT_1 \times VT_2$ is essential: indeed, there are examples   of reducible lattices $\Gamma \leq \Aut(T_1) \times \Aut(T_2)$ acting  with $2$ orbits of vertices and locally $2$-transitive actions at every vertex of both factors,  with $(d_1, d_2) =(3,4)$, see Remark~\ref{rem:Small} below.  

Using the basic covering theory of graphs (see Proposition~\ref{prop:Infinite-Finite}), one shows that Theorem~\ref{thm:Main} is equivalent to a statement on finite groups acting on graphs, namely Theorem~\ref{thm:MainFinite} below. The proof of the latter statement relies heavily, and in several ways, on the Classification of the Finite Simple groups. Particularly relevant is the classification, due to Liebeck--Praeger--Saxl \cite[Corollary~5]{LPS}, of all pairs $(G, M)$ consisting of a finite almost simple group $G$ and a subgroup $M \leq G$ whose order involves all primes dividing the order of $G$ (see   Section~\ref{sec:LPS} below).  It is moreover closely related to the well studied field of {finite groups admitting an $s$-arc transitive Cayley graph} (see \cite{LiXia} and references therein).


Combining the work of Burger--Mozes \cite{BuMo1}, Bader--Shalom \cite{BaSha} and V.~Trofimov \cite{Tro}, we will show that if a  lattice  $\Gamma$ satisfies  the hypotheses of Theorem~\ref{thm:Main} and if it is irreducible, then it is \textbf{hereditiarily just-infinite}, i.e. 
$\Gamma$ is infinite, and every proper quotient of every finite index subgroup of $\Gamma$ is finite. 

\begin{cor}\label{cor:HJI}
Retain the hypotheses of Theorem~\ref{thm:Main} and assume that $\Gamma$ is irreducible. Assume moreover that  if $d_i = 5$ then $F_i \not \cong \mathbf{C}_5 \rtimes \mathbf{C}_4$ for $i=1$ and $2$. Then $\Gamma$ is hereditarily just-infinite.
\end{cor}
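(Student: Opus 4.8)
The plan is to deduce Corollary~\ref{cor:HJI} from Theorem~\ref{thm:Main} by invoking a chain of structural results about irreducible lattices in products of trees with prescribed local action. First I would recall that, since $\Gamma$ acts freely and cocompactly on $T_1 \times T_2$, it is a finitely generated (indeed finitely presented) infinite group, so the ``infinite'' part of the hereditarily just-infinite property is immediate. The substance is to show that every proper quotient of every finite-index subgroup of $\Gamma$ is finite; equivalently, that every nontrivial normal subgroup of every finite-index subgroup of $\Gamma$ has finite index. Since passing to a finite-index subgroup of $\Gamma$ preserves all the standing hypotheses (freeness and cocompactness on $T_1 \times T_2$, and one can arrange — after replacing $\Gamma$ by a finite-index subgroup and the local action accordingly — that the relevant local-action conditions still hold, or at least pass to the closure), it suffices to establish that $\Gamma$ itself is just-infinite and that this is inherited by finite-index subgroups.

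The key steps, in order, are as follows. (1) Let $\overline{\Gamma_i}$ denote the closure of the projection of $\Gamma$ to $\Aut(T_i)$. Using the normal subgroup structure theory of Burger--Mozes \cite{BuMo1}: because the local action $F_i$ is $2$-transitive, hence primitive, and — by the extra hypothesis excluding $\mathbf F_5 \rtimes \mathbf F_5^*$, equivalently $\mathbf{C}_5 \rtimes \mathbf{C}_4$ — is not of affine type with an abelian point stabilizer, one checks that $F_i$ satisfies Burger--Mozes's condition guaranteeing that the closure $\overline{\Gamma_i}$ (or its relevant subgroup acting on $T_i$) has a cocompact topologically simple subgroup, and in any case that $\overline{\Gamma_i}$ has no nontrivial discrete normal subgroup and no infinite nondiscrete proper normal subgroup other than the monolith. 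The point of excluding the affine group $\mathbf F_5 \rtimes \mathbf F_5^*$ is precisely that it is the one $2$-transitive group among the allowed degrees whose point stabilizer is abelian, which is the obstruction in the Burger--Mozes criterion (condition ($\mathcal{C}$) / Proposition~3.3.1 of \cite{BuMo1}). (2) Since $\Gamma$ is irreducible by hypothesis, its projections to the two factors are non-discrete (this is where irreducibility is used: a reducible lattice has discrete projections), so $\Gamma$ embeds as an irreducible lattice in $\overline{\Gamma_1} \times \overline{\Gamma_2}$, a product of two compactly generated, non-discrete, topologically simple (or close to it) totally disconnected locally compact groups. (3) Now invoke the normal subgroup theorem of Bader--Shalom \cite{BaSha} (in the form applicable to irreducible lattices in products of two locally compact groups each of which has the requisite simplicity/non-amenability properties), together with V.~Trofimov's work \cite{Tro} on the structure of vertex-transitive graphs / ensuring the hypotheses of \cite{BaSha} are met in this tree setting: this yields that every nontrivial normal subgroup $N \trianglelefteq \Gamma$ is of finite index, i.e. $\Gamma$ is just-infinite. (4) Finally, for the ``hereditarily'' part: any finite-index subgroup $\Lambda \leq \Gamma$ is again an irreducible cocompact lattice in $T_1 \times T_2$ acting freely, and its closures of projections are open finite-index subgroups of the $\overline{\Gamma_i}$, hence still have the normal-subgroup property from step (1); so the same argument via \cite{BaSha}, \cite{Tro} applies verbatim to $\Lambda$, showing $\Lambda$ is just-infinite. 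Hence $\Gamma$ is hereditarily just-infinite.

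The main obstacle I expect is step (1): verifying cleanly that the local action $F_i$ — known only to be $2$-transitive, plus containing $\Alt(d_i)$ when $d_i \geq 7$, plus not isomorphic to $\mathbf{C}_5 \rtimes \mathbf{C}_4$ — satisfies exactly the hypothesis needed to feed into the Burger--Mozes/Bader--Shalom machinery (namely that the closure of the projection has trivial quasi-center and the appropriate simplicity of its monolith, or Burger--Mozes's property ($\mathcal{C}$)). One must carefully match the group-theoretic condition ``point stabilizer of $F_i$ is nonabelian / $F_i$ acts primitively and is not affine with abelian stabilizer'' against the exact statement in \cite{BuMo1}, and check that the excluded case $d_i = 5$, $F_i \cong \mathbf{C}_5 \rtimes \mathbf{C}_4$ is genuinely the only one in the allowed range where this fails: among $2$-transitive groups of degree $\leq 6$ (Table~\ref{tab:Small2Trans}) the affine $2$-transitive groups with abelian point stabilizer of degree $5$ is $\mathbf F_5 \rtimes \mathbf F_5^*$, degree $4$ has $\mathbf C_2^2 \rtimes \mathbf C_3$ with stabilizer $\mathbf C_3$ abelian but there it turns out condition ($\mathcal C$) still holds or $\Alt$/$\Sym$ intervenes, and for $d_i \geq 7$ the hypothesis $F_i \geq \Alt(d_i)$ forces a nonabelian stabilizer; so the exclusion in the corollary is exactly calibrated. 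Once step (1) is secured, steps (2)--(4) are a relatively standard assembly of \cite{BuMo1}, \cite{BaSha}, \cite{Tro}.
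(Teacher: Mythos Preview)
Your overall architecture---non-discrete projections from irreducibility, then a local-to-global step promoting the local action to $2$-transitivity on ends, then Burger--Mozes structure theory on the closures, then Bader--Shalom---matches the paper's proof of Corollary~\ref{cor:ji-crit}. Two points deserve correction.

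First, your step~(4) is both unnecessary and, as written, flawed. The Bader--Shalom theorem (Theorem~\ref{thm:BaSha} in the paper) already concludes that $\Gamma$ is \emph{hereditarily} just-infinite, not merely just-infinite; there is no separate inductive step over finite-index subgroups. Your proposed argument for step~(4) claims that a finite-index subgroup $\Lambda \leq \Gamma$ again satisfies the standing local-action hypotheses, but this is false in general: $\Lambda$ need not be vertex-transitive on $T_i$, and even if it is, its local action may be a proper subgroup of $F_i$ (e.g.\ dropping from $\Sym(d_i)$ to $\Alt(d_i)$ or smaller), so the hypotheses of Theorem~\ref{thm:Main} need not persist. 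Fortunately none of this matters once you invoke Bader--Shalom in its full strength.

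Second, and more substantively, your step~(1) leans on Burger--Mozes's condition~($\mathcal C$) from \cite{BuMo1}, phrased in terms of the point stabilizer being nonabelian. The paper explicitly notes that the original Burger--Mozes arguments do not cover $2$-transitive local actions of degree~$\leq 5$; this is precisely why Trofimov's theorem (Theorem~\ref{thm:Tro}) is invoked instead. The correct verification is: for each allowed $F_i$ one checks that either $F_i \geq \Alt(d_i)$ or $F_i$ contains $\PSL_2(\FF_q)$ as a normal subgroup with $d_i = q+1$, and then Trofimov's theorem yields $2$-transitivity of $\overline{\Gamma_i}$ on $\partial T_i$. Consulting Table~\ref{tab:Small2Trans}, every $2$-transitive group of degree $3,4,5,6$ satisfies one of these two conditions \emph{except} $\mathbf C_5 \rtimes \mathbf C_4$, which contains neither $\Alt(5)$ nor $\PSL_2(\FF_4) \cong \Alt(5)$. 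This is the clean reason for the exclusion in the corollary; your discussion in terms of ``affine with abelian stabilizer'' is heuristically in the right direction but does not line up with the actual hypothesis being checked (and your parenthetical worry about degree~$4$ dissolves once you note $\Alt(4) \cong \PSL_2(\FF_3)$).
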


See Corollary~\ref{cor:ji-crit} for a more general statement. 

Numerous explicit examples of BMW-groups of small degree satisfying the hypotheses of Theorem~\ref{thm:Main} and Corollary~\ref{cor:HJI} are given in \cite[\S4]{Cap_survey}, \cite{Radu_Latt} and \cite{Rattaggi_PhD}. 

\subsection*{Acknowledgements}

I thank Nicolas Radu for numerous discussions and for performing inspiring experiments with an exhaustive list of BMW-groups of small degree (some information about that work can be found in \cite{Radu_Latt} and  \cite{Radu_PhD}).  
I am  grateful to Michael Giudici for a clarification about a factorization of $\Omega_8^+(2)$ that appears in Case (7) of the proof of Lemma~\ref{lem:N-simple}, and for pointing out the   reducible examples  in degree $(19, 5), (39, 5)$ and $(79, 5)$ that are recorded in Proposition~\ref{prop:Existence}(v). I also thank Marston Conder for his remarks related to that proposition. I am grateful to both referees for their comments and corrections.

\section{Preliminaries}

\subsection{Groups acting on graphs and local action}\label{sec:Graphs}

For graphs and trees, we use the terminology and notation of \cite[\S2.1]{BL}. A graph $X$ consists of a set of vertices $VX$, a set of oriented edges $EX$,  two maps $\partial_0, \partial_1 \colon EX \to V$ representing the endpoints of edges, and an orientation reversing map $EX \to EX : e \mapsto \bar e$ satisfying $\partial_i \bar e = \partial_{1-i} e$ and $\bar{\bar e}= e \neq \bar e$. For $x \in VX$ we set $E(x) = \{e \in EX \mid \partial_0(e)= x\}$. A \textbf{geometric edge} of $X$ is a pair $\{e, \bar e\}$ with $e \in EX$. 

Let now $G$ be group acting on a graph $X$ by automorphisms. We denote by $G_x$ the stabilizer of an element $x \in VX \cup EX$. For $x \in VX$ and $m \geq 0$, we also denote by $G_x^{[m]}$ the subgroup of $G$ fixing all vertices $y \in VX$ at distance $d(x, y) \leq m$. The quotient group $G_x/G_x^{[1]}$, viewed as a permutation group on $E(x)$, is the \textbf{local action} of $G$ at $x$. More generally, the group $G_x/G_x^{[m]}$, viewed as a permutation group on the $m$-ball around $x$, is called the  \textbf{local action of level $m$} of $G$ at $x$. 

An \textbf{edge inversion} is an element   $g \in G$ such that $ge = \bar e$ for some $e \in EX$. If $G$ acts without edge inversion, then  we can form the quotient graph $G \backslash X$, see \cite[\S2.2]{BL}. We say that the $G$-action on $X$ is \textbf{free} if $G$ acts freely on $VX$ and freely on the set of geometric edges. Equivalently, the $G$-action on $X$ is free if $G$ acts freely on $VX$ and has no edge inversion.

\begin{lem}\label{lem:Kernel}
	Let $X$ be a connected graph and $G \leq \Aut(X)$ be a group of automorphisms. Given a normal subgroup $N$ of $G$ acting freely on $X$, the kernel of the $G$-action on the quotient graph $N\backslash X$ coincides with $N$.
\end{lem}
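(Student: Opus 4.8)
The plan is to prove the two inclusions separately. One of them is for free: since $N \trianglelefteq G$, the group $G$ acts on the quotient graph $N\backslash X$ (the $N$-orbit $Ngx$ depends only on $Nx$), and every $n \in N$ fixes each vertex $Nx$ ($x \in VX$) and each edge $Ne$ ($e \in EX$) of $N\backslash X$ because $Nn = N$; hence $N$ is contained in the kernel $K$ of this action. All the work therefore goes into showing $K \leq N$.

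To that end I would take $g \in K$, fix a base vertex $x_0 \in VX$, and first normalize $g$. Since $g$ fixes the vertex $Nx_0$ of $N\backslash X$, there is $n \in N$ — unique, as $N$ acts freely on $VX$ — with $gx_0 = nx_0$. Replacing $g$ by $n^{-1}g$, which again lies in $K$ because $N \leq K$, I may assume $gx_0 = x_0$; it then suffices to show that such a $g$ is the identity of $\Aut(X)$.

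The key step is a propagation argument along edges, fed by the freeness of $N$. Suppose $g$ fixes a vertex $x$, and let $e \in EX$ be an edge with $\partial_0(e) = x$. Since $g$ fixes the edge $Ne$ of $N\backslash X$, there is $n' \in N$ with $ge = n'e$; applying $\partial_0$ and using $gx = x$ gives $n'x = x$, so $n' = 1$ by freeness of $N$ on $VX$, whence $ge = e$ and, applying $\partial_1$, $g$ also fixes $\partial_1(e)$. Because $X$ is connected, an induction on the distance to $x_0$ shows that $g$ fixes every vertex; running the same argument once more with $e$ an arbitrary edge then shows that $g$ fixes every edge. A graph automorphism fixing all vertices and all edges is trivial, so $g = 1$, i.e.\ the original element of $K$ lies in $N$.

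I do not expect a genuine obstacle here. The only points demanding a little care are that $N\backslash X$ is a bona fide graph — which is exactly why one needs $N$ to act \emph{freely}, hence in particular without edge inversions, so that the quotient construction of \cite[\S2.2]{BL} applies — and that membership of $g$ in $K$ supplies not only the condition $gx \in Nx$ on vertices but also the condition $ge \in Ne$ on edges, both of which are used in the propagation step.
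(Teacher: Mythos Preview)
Your proof is correct and follows essentially the same route as the paper's: normalize $g$ by an element of $N$ so that it fixes a base vertex, then propagate along edges using that if $g$ fixes $x$ and $\partial_0(e)=x$ then the element of $N$ sending $e$ to $ge$ must fix $x$ and hence be trivial by freeness on $VX$, and conclude by connectivity. The paper's write-up is slightly terser (it does not spell out the easy inclusion $N\leq K$ or the well-definedness of the quotient), but the argument is the same.
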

\begin{proof}
Let $g \in G$ act trivially on the quotient graph $N\backslash X$ and let $x \in VX$. Since $gN(x) = N(x)$, there exists $n \in N$ with $gn(x) = x$. Let now $y$ be any vertex of $X$ fixed by $h = gn$,  and let $e$ be an oriented edge with $\partial_0 e = y$. Then $e$ and $h(e)$ belong to the same $N$-orbit since $g$ acts trivially on $N\backslash X$. Since $\partial_0e = y = \partial_0 h(e)$, any element of $N$ mapping $e$ to $h(e)$ fixes $y$. Since $N$ acts freely, we deduce that $h(e) = e$. Thus $h$ fixes all  edges emanating from $y$, hence also all the neighbours of $y$. Since the graph $X$ is connected, this implies that $h = gn$ acts trivially on $X$. Thus $g \in N$ as required. 
\end{proof}

\subsection{A reduction to finite group theory}\label{sec:FiniteReduction}

The following basic result from the covering theory of graphs  allows one to go back and forth between reducible lattices in products of trees and finite groups acting on products of graphs, without affecting the local actions. 

\begin{prop}\label{prop:Infinite-Finite}
Let $T_1, T_2$ be regular trees of degree $d_1,  d_2$ and   $\Gamma \leq \Aut(T_1) \times \Aut(T_2)$ be a group acting   transitively on the vertices of the Cartesian product $T_1 \times T_2$. For $i=1, 2$, let $F_i$ denote the local action of $\Gamma$ on $T_i$,   let $K_i$ be the projection on $\Aut(T_i)$ of the kernel of the $\Gamma$-action on $T_{3-i}$. Assume that $K_i$ acts freely on $T_i$ (as defined in Section~\ref{sec:Graphs}). Then for $i=1$ and $2$, we have:
\begin{enumerate}[(i)]
 \item  the  group  $G=\Gamma/K_1\times K_2$ acts transitively on the Cartesian product $VX_1 \times VX_2$, where $X_i $ is the quotient graph $K_i \backslash T_i$,
\item $X_i$ is   of degree $d_i$ and the local action of $G$ on $X_i$ is isomorphic to $F_i$,
\item the $G$-action on $X_i$ is faithful,
\item if the $\Gamma$-action on $VT_1 \times VT_2$ is free, then so is the $G$-action on $VX_1 \times VX_2$. 
\end{enumerate}

Conversely, let   $X_1,X_2$ be regular graphs of degree $d_1,  d_2$ and   $G \leq \Aut(X_1) \times \Aut(X_2)$ be a group acting with $m$ orbits  (resp. acting regularly) on the vertices of the Cartesian product $X_1 \times X_2$. Assume that  the local action of $G$ at every vertex of $X_i$ is isomorphic to the permutation group $F_i$. Then there is a group $\Gamma \leq \Aut(T_1) \times \Aut(T_2)$ acting with $m$ orbits (resp.  acting regularly) on   $VT_1 \times VT_2$, where $T_i$ is the regular tree of degree $d_i$, such that:
\begin{enumerate}[(i)]
	\setcounter{enumi}{3}
	\item The local action of $\Gamma$ at every vertex of $T_i$ is isomorphic to $F_i$,
	\item  $\Gamma$ contains a normal subgroup of the form $K_1 \times K_2$ such that the quotient group $\Gamma/K_1 \times K_2$ is isomorphic to $G$, 
	\item $K_i$ is the fundamental group of the graph $X_i$ acting by covering transformations on the tree $T_i$.
\end{enumerate}
\end{prop}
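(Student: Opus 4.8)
The plan is to prove both directions of Proposition~\ref{prop:Infinite-Finite} using the elementary covering theory of graphs, together with Lemma~\ref{lem:Kernel}. For the first (``direct'') half, I would start by recording the structure of the kernels. Write $N_i \trianglelefteq \Gamma$ for the kernel of the $\Gamma$-action on $T_{3-i}$, so that $N_1 \cap N_2 = 1$ and $N_i$ projects faithfully to $\Aut(T_i)$ with image $K_i$; thus $K_1 \times K_2$ can be identified with the normal subgroup $N_1 N_2$ of $\Gamma$, and $G = \Gamma/(K_1\times K_2)$ makes sense. Since $K_i$ acts freely on $T_i$ by hypothesis, the quotient graph $X_i = K_i\backslash T_i$ is well-defined, regular of degree $d_i$ (covering maps are local isomorphisms on stars of vertices, so the degree is preserved), and $T_i \to X_i$ is a covering with deck group $K_i$. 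The group $\Gamma$ acts on $X_i$ because $K_i$ (viewed inside $\Gamma$ as $N_i$, after killing $N_{3-i}$) is normalised; more precisely $\Gamma/N_{3-i}$ acts on $T_i$ and $K_i = N_i/(N_i\cap N_{3-i})$ is normal in it, so it descends to $X_i$, and by Lemma~\ref{lem:Kernel} the kernel of that action on $X_i$ is exactly $K_i$. Hence $G = \Gamma/(K_1\times K_2)$ acts faithfully on $X_i$, giving (iii); the local action is unchanged because $T_i\to X_i$ is a local isomorphism at every vertex, giving (ii); transitivity on $VX_i$ follows from transitivity on $VT_i$, and transitivity on $VX_1\times VX_2$ from transitivity on $VT_1\times VT_2$ by passing to quotients, giving (i). For (iv): if $\Gamma$ acts freely on $VT_1\times VT_2$ and $g\in G$ fixes $(\bar x_1,\bar x_2)\in VX_1\times VX_2$, lift to $\tilde g\in\Gamma$ and use that $\tilde g$ maps $(x_1,x_2)$ to $(k_1 x_1, k_2 x_2)$ for some $k_i\in K_i$; replacing $\tilde g$ by $(k_1\times k_2)^{-1}\tilde g$ (which lies in the same coset) fixes $(x_1,x_2)$, hence is trivial, so $g=1$; absence of edge inversions is argued the same way on geometric edges.

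For the converse (``build an infinite lattice from a finite one''), the idea is to pass to universal covers. Given $G\le\Aut(X_1)\times\Aut(X_2)$ as in the statement, let $T_i$ be the universal cover of $X_i$; since $X_i$ is regular of degree $d_i$ and connected, $T_i$ is the $d_i$-regular tree, with covering map $p_i\colon T_i\to X_i$ whose deck group is the fundamental group $K_i := \pi_1(X_i)$, acting freely on $T_i$ by covering transformations (this is where I would cite \cite{BL} for the covering theory of graphs, in particular that a connected graph has a simply connected universal cover on which $\pi_1$ acts freely with quotient the original graph). The automorphism group $\Aut(X_i)$ lifts to a group of automorphisms of $T_i$ normalising $K_i$: concretely, one forms the group $\wh{\Aut(X_i)}$ of all lifts of all elements of $\Aut(X_i)$, which fits into an extension $1\to K_i\to \wh{\Aut(X_i)}\to \Aut(X_i)\to 1$. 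Then I would set $\Gamma$ to be the preimage in $\wh{\Aut(X_1)}\times\wh{\Aut(X_2)}$ of $G\le \Aut(X_1)\times\Aut(X_2)$ under the product of these two extension maps. By construction $\Gamma$ contains $K_1\times K_2$ as a normal subgroup with quotient $G$, giving (v) and (vi); the local action of $\Gamma$ on $T_i$ equals that of $G$ on $X_i$ because $p_i$ is a local isomorphism, giving (iv). Transitivity of $\Gamma$ on $VT_1\times VT_2$: since $K_i$ is transitive on each fibre of $p_i$ and $G$ is transitive on $VX_1\times VX_2$, a diagram chase shows $\Gamma$ is transitive on $VT_1\times VT_2$. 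For the freeness statement, suppose $G$ acts freely on $VX_1\times VX_2$; if $(\gamma_1,\gamma_2)\in\Gamma$ fixes $(\tilde x_1,\tilde x_2)$, then its image in $G$ fixes $(p_1\tilde x_1,p_2\tilde x_2)$, hence is trivial, so $\gamma_i\in K_i$ for each $i$; but $\gamma_i$ is then a covering transformation of $T_i$ fixing a vertex, hence trivial since $K_i$ acts freely — so $(\gamma_1,\gamma_2)=1$. No edge inversions: same argument on geometric edges, plus the observation that a covering transformation of a tree that inverts an edge would fix its midpoint and hence (being a graph automorphism acting freely) cannot exist.

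The routine-but-needed bookkeeping is the identification $K_1\times K_2 \cong N_1 N_2$ in the direct half and the verification, in the converse, that the ``lift group'' $\wh{\Aut(X_i)}$ really is a group of tree automorphisms normalising $K_i$ with the stated quotient — both are standard facts about covering spaces of graphs, and I would simply quote \cite{BL}. I expect the only genuinely delicate point to be keeping the free/transitive adjectives correctly tracked through the quotient and lift operations simultaneously on both factors: one must be careful that ``free on $VT_1\times VT_2$'' is strictly stronger than ``free on each $VT_i$'' (indeed $K_i$ does \emph{not} in general act freely in the product sense even though it acts freely on $VT_i$), so the freeness clauses (iv) must be handled by the pointwise lift-and-cancel argument sketched above rather than factor by factor. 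Everything else is a direct application of Lemma~\ref{lem:Kernel} and the universal-cover formalism.
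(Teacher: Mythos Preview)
Your proposal is correct and follows exactly the approach indicated in the paper: the paper's own proof is a two-sentence appeal to ``basic covering theory of graphs, together with Lemma~\ref{lem:Kernel}'' for the direct part and to ``standard covering theory'' for the converse, and you have simply written out the details of that appeal. The only bookkeeping you flag as delicate (the identification $N_1N_2\cong K_1\times K_2$, the lift-and-cancel argument for freeness on the product, and the existence of the lift group $\wh{\Aut(X_i)}$) is handled correctly.
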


\begin{proof}
For the first part, notice that since $K_i$ acts freely on $T_i$, the quotient graph $X_i = K_i \backslash T_i$ is   well defined. Recall that, by definition, the vertex set of $X_i$ consists of the $K_i$-orbits in $VT_i$, and the oriented edges of $X_i$ are defined as the $K_i$-orbits of oriented edges in $T_i$, so that the projection map $VT_i \to VX_i$ is a morphism of graphs. Since $K_i$ acts freely, the quotient map $T_i \to X_i$ can also be viewed as a covering map in the classical  sense from topology.  Assertions (i)--(iv) now follow from the basic covering theory of graphs (for which we refer to   \cite{Bass} and \cite{Serre}), together with Lemma~\ref{lem:Kernel}. 

The converse is also a standard application of the covering theory of graphs. 
\end{proof}

Given Proposition~\ref{prop:Infinite-Finite}, the following result is an easy consequence of known results on $s$-arc transitive Cayley graphs due to Li--Lu \cite{LiLu}, Xu--Fang--Wang--Xu \cite{XFWX} and M.~Conder~\cite{Conder}. We denote by $T_n$ the regular tree of degree $n$.

\begin{prop}\label{prop:Existence}\quad
\begin{enumerate}[(i)]
	\item There exists a reducible lattice $\Gamma_{3, 23} \leq \Aut(T_3) \times \Aut(T_{23})$ acting regularly on the vertices of   $T_3 \times T_{23}$, whose local action on $T_3$ (resp. $T_{23}$) is $\Sym(3)$ (resp. $\Sym(23)$).  

	\item There exists a reducible lattice $\Gamma_{3, 24} \leq \Aut(T_3) \times \Aut(T_{24})$ acting regularly on the vertices of   $T_3 \times T_{24}$, whose local action on $T_3$ (resp. $T_{24}$) is $\Sym(3)$ (resp. $\Sym(24)$).  

	\item  There exists a reducible lattice $\Gamma_{3, 47} \leq \Aut(T_3) \times \Aut(T_{47})$ acting regularly on the vertices of  $T_3 \times T_{47}$, whose local action on $T_3$ (resp. $T_{47}$) is $\Sym(3)$ (resp. $\Alt(47)$).
	
		\item There exists a reducible lattice $\Gamma_{4, 11663} \leq \Aut(T_4) \times \Aut(T_{11663})$ acting regularly on the vertices of   $T_4 \times T_{11663}$, whose local action on $T_4$ (resp. $T_{11663}$) is $\Sym(4)$ (resp. $\Alt(11663)$).  
	
		\item There exists a reducible lattices $\Gamma_{5, n} \leq \Aut(T_5) \times \Aut(T_{n})$ acting regularly on the vertices of   $T_5 \times T_{n}$, for $n = 19, 39$ and $79$, whose local action on $T_5$ is $\mathbf C_5 \rtimes \mathbf C_4$, and whose respective local action on $T_{19}$, $T_{39}$ and $T_{79}$ is $\Sym(19)$, $\Alt(39)$ and $\Alt(79)$. 
	
\end{enumerate}
\end{prop}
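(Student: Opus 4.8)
The plan is to invoke the converse part of Proposition~\ref{prop:Infinite-Finite}: in order to produce a reducible lattice $\Gamma_{d_1,d_2}$ with the prescribed local actions, it suffices to exhibit a finite group $G$ acting freely and transitively on $VX_1\times VX_2$, where $X_i$ is a connected regular graph of degree $d_i$ and the local action of $G$ on $X_i$ is the required permutation group $F_i$; Proposition~\ref{prop:Infinite-Finite} then yields $\Gamma=\Gamma_{d_1,d_2}$ acting freely and transitively on $VT_1\times VT_2$, with local actions $F_i$, and containing a finite-index normal subgroup $K_1\times K_2$ --- hence reducible.

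Such a group $G$ amounts to an exact factorization $G=R_1R_2$, i.e.\ $R_1\cap R_2=1$, together with $G$-invariant connected graph structures $X_1$ on $G/R_2$ and $X_2$ on $G/R_1$, of respective degrees $d_1$ and $d_2$, such that the stabilizer $R_2$ of the base vertex of $X_1$ induces $F_1$ on its $d_1$ neighbours and, symmetrically, $R_1$ induces $F_2$ on $X_2$; in particular $X_1$ is a connected Cayley graph of $R_1$ on which $G$ acts $2$-arc-transitively (as $F_1$ is $2$-transitive), and $X_2$ is a connected Cayley graph of $R_2$ on which $G$ acts $2$-arc-transitively. The idea is to choose $X_1$ among the ``standard'' locally-$\Sym$ or locally-$\Alt$ graphs of degree $d_1$ --- a complete graph $K_{d_1+1}$ (when $6 \mid d_1+1$), a complete bipartite graph $K_{d_1,d_1}$, or a crown graph. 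Then $G$ is a symmetric or alternating group, or a small extension of one, $R_2$ is (the preimage of) a point stabilizer --- so that $R_2$ surjects onto one of $\Sym(23),\Sym(24),\Alt(47),\Alt(11663),\Sym(19),\Alt(39),\Alt(79)$ --- and the complement $R_1$ is a regular subgroup of $G$ of small order which one is free to choose to surject onto $F_2$ (for $d_2=3$ one takes $R_1\cong\Sym(4)$ or $\Sym(4)\times\mathbf{C}_2$, and a parity computation places $R_1$ inside the relevant alternating group when $F_1$ is alternating), so that the local action induced on $X_2$ is exactly $F_2$.

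With this reduction in hand, $X_2$ is precisely a connected $s$-arc-transitive ($s\ge2$) Cayley graph of small valency $d_2\in\{3,4,5\}$ of the group $R_2$, admitting the prescribed overgroup $G$; equivalently, one needs an element $g\in G$ with $g^2\in R_1$, $[R_1:R_1\cap R_1^g]=d_2$ and $\langle R_1,g\rangle=G$, which realizes $X_2$ as the coset graph $\mathrm{Cos}(G,R_1,R_1gR_1)$. The existence of such graphs, with the precise local action required, is exactly what is supplied --- case by case --- by the classifications of cubic, tetravalent and pentavalent $s$-arc-transitive Cayley graphs of (almost) simple groups due to Xu--Fang--Wang--Xu~\cite{XFWX}, Li--Lu~\cite{LiLu}, M.~Conder~\cite{Conder} and Ling--Lou~\cite{LingLu1,LingLu2}: these furnish the requisite cubic graphs attached to $\Sym(23)$, $\Sym(24)$ and $\Alt(47)$, the tetravalent one attached to $\Alt(11663)$, and the pentavalent ones attached to $\Sym(19)$, $\Alt(39)$ and $\Alt(79)$.

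The main point to get right --- and where essentially all the work lies --- is the matching of the two factors: one must verify that a single group $G$ simultaneously carries the standard graph $X_1$ and the low-valency Cayley graph $X_2$ coming from the literature, i.e.\ that $G=R_1R_2$ is an exact factorization, that the coset graph $\mathrm{Cos}(G,R_1,R_1gR_1)$ is connected, and that it induces the prescribed local action $F_2$ rather than a larger $2$-transitive group. This is a matter of translating the cited results, phrased in terms of $s$-arc-transitive Cayley graphs of simple groups and their automorphism groups, into the ``overgroup plus complement'' data needed to feed Proposition~\ref{prop:Infinite-Finite}. I expect the subtlest case to be $(d_1,d_2)=(24,3)$: here neither $K_{25}$ nor the crown graph on $50$ vertices has vertex-count divisible by $|\Sym(3)|=6$, so one takes $X_1=K_{24,24}$ with $R_2$ a point stabilizer in $\Sym(24)\wr\mathbf{C}_2$, whence the relevant cubic graph is a Cayley graph of a non-simple group --- which is precisely why M.~Conder's explicit constructions, rather than the classifications for simple groups alone, are needed.
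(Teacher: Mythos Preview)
Your approach is essentially the same as the paper's: both reduce to finding a finite group $G$ with an exact factorization $G=R_1R_2$ so that $X_1$ is a complete or complete-bipartite graph (realizing the large $\Alt$/$\Sym$ local action) and $X_2$ is a small-valency $s$-arc-transitive Cayley graph of $R_2$ taken from the cited classifications, after which Proposition~\ref{prop:Infinite-Finite} manufactures the reducible lattice. Your identification of the $(24,3)$ case as the delicate one, handled via $K_{24,24}$ and Conder's cubic Cayley graph of a non-simple group, also matches the paper.

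One genuine gap: for $n=19$ in part~(v) you assert that the required pentavalent $2$-arc-transitive Cayley graph of $\Sym(19)$ is supplied by \cite{LingLu1,LingLu2}. It is not---those references cover $\Alt(39)$ and $\Alt(79)$ only. The paper credits the $n=19$ example to an unpublished \textsc{Magma} computation of M.~Giudici, so you cannot simply invoke the literature here; you would either need to reproduce such a computation or acknowledge the external source.
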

\begin{proof}
By \cite[Theorem~1.1]{LiLu}, there is a $3$-regular graph $Y$ which is a Cayley graph of the group $B = \Sym(23)$, whose full automorphism group $G$ is isomorphic to $\Sym(24)$, and such that the local action of $G$ on $Y$ is $\Sym(3)$. Let $A$ be the stabilizer in $G$ of a vertex $y \in VY$. Hence $|A| = 24$, $A \cap B = \triv$ and $G= AB$. Let moreover $X$ be the complete graph on $24$ vertices, on which $G$ acts faithfully by automorphisms. Let $x \in VX$ be the vertex fixed by $B$. Since $G=AB$ and $A \cap B = \triv$, it follows that the diagonal $G$-action on the vertex set of $X \times Y$ is free and transitive. The assertion (i) thus follows from Proposition~\ref{prop:Infinite-Finite}. 

For (ii), we use a similar argument, using a degree~$3$ Cayley graph $Y$ of an index~$2$ subgroup of $\Sym(23) \times \Sym(24)$ appearing in  \cite[Theorem~2.1(d)]{Conder}. We define $X$ to be the complete bipartite graph $\mathbf K_{24, 24}$ in this case. 

The proof of (iii), (iv) and (v) are also similar. For (iii) and (iv), one uses a degree~$3$ Cayley graph of $\Alt(47)$ appearing in \cite[Theorem~2.1(e)]{Conder} (such a graph was first constructed in \cite{XFWX}) and a degree~$4$ Cayley graph of $\Alt(11663)$ discussed in  \cite[\S3]{Conder}. For (v) and $n=39, 79$, one uses the graph from \cite{LingLu1} and \cite[Theorem~1.1(2)]{LingLu2} respectively. For (v) and $n=19$, an example was constructed by M.~Giudici using \textsc{Magma}. 
\end{proof}

\begin{rmk}\label{rem:Small}
Using the converse part of Proposition~\ref{prop:Infinite-Finite}, one can also construct reducible   lattices $\Gamma$ in regular trees of smaller degrees with $2$-transitive local actions. 

For example, consider the group $G = \Sym(4) \times \mathbf{C}_2$. It acts locally $2$-transitively on the bipartite graph $X$ with $2$ vertices and $4$ geometric edges, as well as  on the graph $Y$ which is the $1$-skeleton of the cube. The diagonal action of $G$ on $X \times Y$ has $2$ orbits of vertices, and the vertex-stabilizers are non-trivial (they are isomorphic to $\Sym(3)$). Invoking Proposition~\ref{prop:Infinite-Finite}, we obtain a locally $2$-transitive reducible lattice $\Gamma \leq \Aut(T_4) \times \Aut(T_3)$ acting with $2$~orbits of vertices, since $\widetilde X \cong T_4$ and $\widetilde Y \cong T_3$. 

Another example is constructed similarly using  the group $G = \Sym(5)$, that  acts locally $2$-transitively on the complete graph $X= \mathbf K_5$, as well as on the Petersen graph $Y$. The diagonal action of $G$ on $X \times Y$ has $2$~orbits of vertices (the  stabilizers of vertices in the corresponding orbits   are respectively of order~$4$ and~$6$). This yields   a locally $2$-transitive reducible lattice $\Gamma \leq \Aut(T_4) \times \Aut(T_3)$ acting with $2$~orbits of vertices, since $\widetilde{\mathbf K_5} \cong T_4$ and $\widetilde Y \cong T_3$. 
\end{rmk}

\subsection{Locally $2$-transitive actions}

Recall that a permutation group $G \leq \Sym(\Omega)$ is \textbf{quasi-primitive} if every non-trivial normal subgroup of $G$ acts transitively on $\Omega$. 

\begin{lem}[{\cite[Lemma~1.4.2]{BuMo1}}] \label{lem:BuMo:LocallyQuasiPrimitive}
Let $X$ be a connected graph, let $G \leq \Aut(X)$ be a group whose local action is quasi-primitive and let $N \leq G$ be a normal subgroup of $G$. Set 
$$\begin{array}{rcl}
VX'(N) &= &\{x \in VX \mid N_x \text{ acts transitively on } E(x)\},\\
VX''(N) &= &\{x \in VX \mid N_x \text{ acts trivially on } E(x)\}.\\
\end{array}$$
Then one of the following assertions holds:
\begin{enumerate}[(i)]
	\item $VX''(N) = X$ and $N$ acts freely on $VX$. 
	\item $VX'(N) = X$ and $N$ acts transitively on the set of geometric edges of $X$. In particular $N$ has at most $2$~orbits of vertices. 
	\item $VX = VX'(N) \cup VX''(N)$ is a $G$-invariant bipartition of $X$, and for any $x'' \in VX''(N)$, the $1$-ball $B(x'', 1)$ around $x''$ is a strict fundamental domain for the $N$-action on $X$. 
\end{enumerate}
\end{lem}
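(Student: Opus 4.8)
\textbf{Proof strategy for Lemma~\ref{lem:BuMo:LocallyQuasiPrimitive}.}

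The plan is to analyze, vertex by vertex, the possible behaviors of the local action of the normal subgroup $N$, and then use the normality of $N$ together with connectedness of $X$ to globalize the picture. First I would fix a vertex $x \in VX$ and consider the subgroup $N_x G_x^{[1]}/G_x^{[1]}$ of the local action $G_x/G_x^{[1]}$ at $x$; since $N$ is normal in $G$, this is a normal subgroup of the quasi-primitive permutation group $G_x/G_x^{[1]}$ acting on $E(x)$. Quasi-primitivity then forces a dichotomy at each vertex: either $N_x$ acts transitively on $E(x)$, i.e. $x \in VX'(N)$, or $N_x$ acts trivially on $E(x)$, i.e. $x \in VX''(N)$ (a non-trivial normal subgroup of a quasi-primitive group must be transitive, so the only remaining option is the trivial one). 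Hence $VX = VX'(N) \sqcup VX''(N)$ as a first observation, and both sets are clearly $G$-invariant since $N \trianglelefteq G$.

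Next I would study how adjacent vertices relate. Suppose $x \in VX''(N)$ and $y$ is a neighbour of $x$, joined by an edge $e$ with $\partial_0 e = x$. Since $N_x$ fixes $e$, the element $\bar e$ is fixed by $N_x$, so $N_x \leq N_y$; I would then argue that in fact $N_x$ already determines enough of $N_y$. The key local computation is: if $x \in VX''(N)$ then every neighbour $y$ of $x$ satisfies $N_y$ acting transitively on $E(y) \setminus \{\bar e\}$ or $N_y$ trivial there — and using that $N_y G_y^{[1]}/G_y^{[1]}$ is normal in the quasi-primitive $G_y/G_y^{[1]}$, the stabilizer of the point $\bar e$ inside a transitive normal subgroup is still "large", forcing $y \in VX'(N)$ whenever $N_x \ne N_y$, i.e. whenever $N$ does not act trivially near $y$. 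Conversely, if $x \in VX'(N)$, then $N_x$ is transitive on $E(x)$, so $N$ has a single orbit on the edges emanating from the $N$-orbit of $x$, from which one extracts that either all of $VX$ is in $VX'(N)$ (case (ii)) or the graph is bipartite with parts $VX'(N)$ and $VX''(N)$ (case (iii)). The case where $VX''(N) = VX$ is exactly case (i): then $N_x$ is trivial on $E(x)$ for every $x$, and by the connectedness argument already used in the proof of Lemma~\ref{lem:Kernel} (an element of $N$ fixing a vertex and acting trivially on its incident edges fixes all neighbours, hence everything), $N$ acts freely on $VX$.

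To assemble the trichotomy: if $VX''(N) = \emptyset$ we are in case (ii), where transitivity of each $N_x$ on $E(x)$ plus connectedness gives that $N$ is transitive on geometric edges, hence has at most two vertex orbits (the two endpoints of a single edge). If $VX'(N) = \emptyset$ we are in case (i). Otherwise both sets are non-empty, and the adjacency analysis above shows every edge joins a vertex of $VX'(N)$ to one of $VX''(N)$, so this is a $G$-invariant bipartition; for $x'' \in VX''(N)$, transitivity of $N_{y}$ on $E(y) \setminus \{\bar e\}$ for each neighbour $y \in VX'(N)$ of $x''$, combined with the triviality of $N_{x''}$ on $E(x'')$ and connectedness, yields precisely that $B(x'', 1)$ is a fundamental domain for $N$ acting on $X$ (every vertex is $N$-equivalent to one in this ball, and no two distinct vertices of the ball are $N$-equivalent).

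\textbf{Main obstacle.} The delicate point is the local-to-global step in case (iii): showing that $B(x'', 1)$ is a \emph{precise} fundamental domain, not merely a fundamental domain. This requires controlling the $N$-action simultaneously at $x''$, at its neighbours, and at second neighbours, and ruling out unwanted identifications between vertices in the ball. One handles it by propagating the transitivity of $N_y$ on $E(y) \setminus \{\bar e\}$ outward along paths, using at each step that $N$ is normal in a locally quasi-primitive group so that the relevant point stabilizers inside $N$ remain transitive on the "forward" edges; the freeness-type argument of Lemma~\ref{lem:Kernel} then prevents collapse. Since this is Lemma~1.4.2 of \cite{BuMo1}, I would in practice cite it directly, but the sketch above is the route I would take to reprove it.
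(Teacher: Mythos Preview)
The paper does not supply a proof of this lemma: it is quoted verbatim from Burger--Mozes with the attribution \cite[Lemma~1.4.2]{BuMo1} and no \texttt{proof} environment follows. Your final remark --- that in practice you would cite it directly --- therefore matches the paper's treatment exactly.

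Your sketch of the underlying argument is essentially the standard one: the normality of $N$ forces $N_xG_x^{[1]}/G_x^{[1]}$ to be a normal subgroup of the quasi-primitive local action, yielding the vertex dichotomy $VX = VX'(N)\sqcup VX''(N)$; both pieces are $G$-invariant; and one then globalizes using connectedness. The one place where your outline is genuinely thin is the adjacency step establishing the bipartition in case~(iii). You assert that ``every edge joins a vertex of $VX'(N)$ to one of $VX''(N)$'' once both sets are non-empty, but your preceding discussion does not actually exclude adjacent pairs in $VX'(N)$ or in $VX''(N)$. The clean observation you are missing is that for $x\in VX'(N)$ the transitivity of $N_x$ on $E(x)$ puts all neighbours of $x$ into a single $N$-orbit, hence (since $VX'(N)$ and $VX''(N)$ are $N$-invariant) either all neighbours of $x$ lie in $VX'(N)$ or all lie in $VX''(N)$; combining this with connectedness and the assumption that $VX''(N)\neq\emptyset$ forces the bipartite structure. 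With that in hand, the fundamental-domain claim in~(iii) follows as you indicate. Since the paper itself defers entirely to \cite{BuMo1}, this gap is immaterial for the comparison.
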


The case (ii) splits into two subcases, according to whether $N$ is transitive on $VX$. In particular, we deduce the following when $G$ is vertex-transitive. 

\begin{cor}\label{cor:BuMo}
	Let $X$ be a connected graph, let $G \leq \Aut(X)$ be a vertex-transitive group whose local action is quasi-primitive. For any normal subgroup  $N \leq G$, one of the following assertions holds:
	\begin{enumerate}[(i)]
		\item $N$ acts freely on $VX$. 
		
		\item $N$ is transitive on $VX$ and  on the set of geometric edges. 
		
		\item $N$ has exactly two orbits on $VX$, which form a $G$-invariant bipartition of $X$, and $N$ is transitive on the set of geometric edges. 

	\end{enumerate}

\end{cor}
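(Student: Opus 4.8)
\textbf{Proof plan for Corollary~\ref{cor:BuMo}.}

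The strategy is to specialize Lemma~\ref{lem:BuMo:LocallyQuasiPrimitive} to the case where $G$ is vertex-transitive and to reorganize the trichotomy there into the three alternatives stated here. First I would observe that since $G$ is vertex-transitive and $N$ is normal in $G$, the sets $VX'(N)$ and $VX''(N)$ are both $G$-invariant; moreover, for any two vertices $x, y$, conjugating by a $g \in G$ with $g(x) = y$ shows that the conjugacy class of the permutation group induced by $N_x$ on $E(x)$ does not depend on $x$. In particular, $N_x$ acts transitively on $E(x)$ for some $x$ if and only if it does so for every $x$, and similarly $N_x$ acts trivially on $E(x)$ for some $x$ if and only if for every $x$. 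This means that in Lemma~\ref{lem:BuMo:LocallyQuasiPrimitive} the alternative (iii), in which $VX = VX'(N) \cup VX''(N)$ is a nontrivial partition into two \emph{nonempty} $G$-invariant pieces, cannot occur, because a nonempty $G$-invariant set of vertices is all of $VX$ by vertex-transitivity, forcing $VX'(N) = VX''(N) = VX$, which is absurd unless one of them is empty. Hence only the ``degenerate'' forms of (iii) survive, and these coincide with (i) or (ii) of the lemma.

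Concretely, I would argue as follows. Apply Lemma~\ref{lem:BuMo:LocallyQuasiPrimitive}. In case (i) of the lemma, $N$ acts freely on $VX$, which is exactly alternative (i) here. In case (ii) of the lemma, $N$ acts transitively on the set of geometric edges of $X$, and has at most two orbits on vertices; since a geometric-edge-transitive group on a connected graph with at least one edge is clearly vertex-transitive on each side of any bipartition, $N$ has either one or two vertex-orbits, giving alternatives (ii) and (iii) here respectively — in the two-orbit case, the two orbits are swapped-or-preserved setwise, and since they are permuted by the vertex-transitive group $G$ they must form a $G$-invariant bipartition. Finally, in case (iii) of the lemma, we use the vertex-transitivity of $G$ and the $G$-invariance of $VX'(N)$ and $VX''(N)$: one of these two sets must be empty. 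If $VX''(N) = \emptyset$ then $VX = VX'(N)$ and we are back in case (ii) of the lemma; if $VX'(N) = \emptyset$ then $VX = VX''(N)$ and we are back in case (i). In all cases one of (i), (ii), (iii) of the corollary holds.

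The only point requiring a little care — and the one I would treat as the main (minor) obstacle — is the bookkeeping in the geometric-edge-transitive case: checking that ``$N$ transitive on geometric edges and at most two vertex-orbits'' really does split cleanly into ``one orbit'' versus ``two orbits forming a $G$-invariant bipartition'', and that in the two-orbit subcase the graph is necessarily bipartite with those orbits as the parts. This follows because an $N$-orbit of a geometric edge $\{e, \bar e\}$ has its two endpoints in the two (possibly equal) vertex-orbits, so if there are two vertex-orbits every edge runs between them, i.e. $X$ is bipartite with the two $N$-orbits as color classes; $G$-invariance of this bipartition is then automatic from the $G$-invariance of the $N$-orbit partition combined with vertex-transitivity of $G$. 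Everything else is a direct translation of the lemma's conclusions using the elementary fact that a nonempty $G$-invariant vertex subset equals $VX$ when $G$ is vertex-transitive.
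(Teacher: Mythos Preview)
Your proposal is correct and follows essentially the same approach as the paper: both use vertex-transitivity of $G$ together with normality of $N$ to deduce that the permutation group $N_x \curvearrowright E(x)$ is independent of $x$, which collapses case~(iii) of Lemma~\ref{lem:BuMo:LocallyQuasiPrimitive} into cases~(i) or~(ii), and then both split case~(ii) of the lemma according to whether $N$ has one or two vertex-orbits. The only cosmetic difference is in justifying the bipartition in the two-orbit subcase: the paper argues that no element of $N$ can send a vertex to a neighbour (since $N_x$ is transitive on $E(x)$, this would force vertex-transitivity), whereas you argue via edge-transitivity plus connectedness; your version as written is a touch elliptical (the step ``if there are two vertex-orbits every edge runs between them'' needs the observation that an edge inside one orbit would, by edge-transitivity, force all edges inside that orbit, contradicting connectedness), but the idea is correct and equivalent.
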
	
\begin{proof}
Since $N$ is normal and $G$ is vertex-transitive, the $N_x$-action on $E(x)$ is isomorphic to the $N_y$-action on $E(y)$ for any two vertices $x, y \in VX$. Thus only the cases (i) or (ii) from Lemma~\ref{lem:BuMo:LocallyQuasiPrimitive} can occur. In the second case, observe that if $N$ is not transitive on $VX$, then no element of $N$ can map a vertex to a neighbour, because $ N_x \text{ acts transitively on } E(x)$ for all $x \in VX$. Thus the $N$-orbits form a $G$-invariant partition of $VX$ such that no two element of a given class are adjacent. Since $N$ is transitive on the set of geometric edges, it has at most $2$~orbits of vertices. The desired assertion follows.  
\end{proof}

In case  $N \leq G$ is a normal subgroup acting non-freely on $VX$, we have the following. 

\begin{cor}\label{cor:BuMo:2}
	Let $X$ be a connected   graph, let $G \leq \Aut(X)$ be a vertex-transitive group whose local action is quasi-primitive.  Let       $N \leq G$ be a normal subgroup. Assume that neither $N$ nor $\mathrm{C}_G(N)$ acts  freely on $VX$. Then either  $|VX | \leq 2$, or $X$ is complete bipartite and $N$ acts regularly on the set of geometric edges. 
\end{cor}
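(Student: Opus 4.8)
The plan is to combine Corollary~\ref{cor:BuMo} with Corollary~\ref{cor:BuMo:2} applied twice, once to $N$ and once to a normal subgroup contained in $\mathrm{C}_G(N)$, and then to analyze the very rigid combinatorial structure that results. First I would invoke Corollary~\ref{cor:BuMo} for the normal subgroup $N$: since $N$ does not act freely on $VX$, either $N$ is transitive on $VX$ (case (ii)) or $N$ has exactly two orbits forming a $G$-invariant bipartition (case (iii)), and in both cases $N$ is transitive on the set of geometric edges. Similarly, since $\mathrm{C}_G(N)$ is itself normal in $G$ (centralizers of normal subgroups are normal) and, by hypothesis, does not act freely on $VX$, Corollary~\ref{cor:BuMo} gives that $\mathrm{C}_G(N)$ is transitive on geometric edges, and is either vertex-transitive or has a bipartition into two orbits.

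Next I would exploit the fact that $N$ and $C := \mathrm{C}_G(N)$ commute elementwise, which severely constrains how they can both act transitively on the edge set $E$ of $X$. A standard argument: if a group $H$ acts on a set $E$ and $K$ centralizes $H$, then $K$ preserves the $H$-orbits' structure; more to the point, if both $H$ and $K$ act transitively on $E$, then the stabilizer $H_e$ is normalized by $K$ (since $K$ commutes with $H$), and one gets a map from $E$ into cosets in a way that forces $|E| = |H : H_e|$ to divide $|K : K_e|$-related quantities. The cleanest route is via the orbits of $H_e$ on $E$: because $C$ centralizes $N$, the $C$-action commutes with the $N$-action, and the only way two commuting transitive actions can coexist on $E$ while one of them (say $N$, via its vertex-stabilizer structure) has a specified kernel-like behavior is for the actions to be ``sharply transitive'' up to small discrepancies. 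Concretely I would aim to show that the stabilizer $N_e$ of a geometric edge $e$ acts trivially on all of $E$, i.e. $N$ acts \emph{sharply} transitively on the set of geometric edges; this is where the commuting of $C$ with $N$ and the transitivity of $C$ on $E$ get used, together with vertex-transitivity of $G$ to propagate local information.

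Once $N$ acts sharply transitively on geometric edges, the combinatorics collapse quickly. If $N$ is vertex-transitive (case (ii) of Corollary~\ref{cor:BuMo}), then $|VX|$ divides $|N| = |E|$, and sharp edge-transitivity combined with vertex-transitivity and the quasi-primitive (hence nontrivial) local action forces $X$ to have a single vertex orbit with each vertex incident to the same number of edges; a counting argument relating $|VX|$, $|E|$ and the degree then pins $|VX|$ down to a bounded value — in fact one shows $|VX| \leq 2$, since otherwise sharp transitivity on edges is incompatible with two edges sharing a vertex in a vertex-transitive setting of degree $\geq 2$. If instead $N$ is not vertex-transitive, it has two vertex orbits forming a $G$-invariant bipartition with parts $A, B$; sharp transitivity of $N$ on $E$ means $N$ identifies $E$ with $N$ itself, and the two orbits $A, B$ correspond to the two ``endpoint'' maps $\partial_0, \partial_1 \colon E \to VX$, whose images are $A$ and $B$. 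One then checks that every vertex of $A$ is joined to every vertex of $B$: given $a \in A$ and $b \in B$, there is an edge $e_0$ from some $a_0 \in A$ to $b$; transitivity of $N$ on $A$ moves $a_0$ to $a$ while fixing the orbit $B$ setwise, and sharp transitivity pins down the image edge to join $a$ to a specific vertex of $B$, and a further counting/orbit argument (using that $N_b$ must act transitively on $E(b)$ by Lemma~\ref{lem:BuMo:LocallyQuasiPrimitive}(ii) or the quasi-primitive local action) shows that vertex is exactly $b$. Hence $X = \mathbf K_{|A|, |B|}$ is complete bipartite, and $N$ acts sharply transitively on its geometric edges, which is precisely the second alternative of the statement.

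The main obstacle I anticipate is the step extracting \emph{sharp} transitivity of $N$ on the edge set from the mere coexistence of the two commuting transitive actions of $N$ and $C = \mathrm{C}_G(N)$. This requires a careful bookkeeping of stabilizers: one must show $N_e$ is trivial (or trivial after factoring out the global kernel, which is excluded since $G$ acts faithfully in the intended applications, though here faithfulness is not assumed, so one works with the image or directly with the action). The argument should run: $C$ centralizes $N$, so $C$ normalizes $N_e$ for each edge $e$; transitivity of $C$ on $E$ then forces all the conjugates $N_{e'} = g N_e g^{-1}$ (for $g \in C$) to be equal, meaning $N_e$ stabilizes every edge, i.e. $N_e$ is in the kernel of the $N$-action on $E$. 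If that kernel is nontrivial, it is a normal subgroup of $G$ acting with all edge-stabilizers equal to itself — a ``small'' normal subgroup — and by quasi-primitivity of the local action and Lemma~\ref{lem:BuMo:LocallyQuasiPrimitive} such a subgroup must act freely on $VX$; combining the ``acts freely'' conclusion for this kernel with the structural constraints forces $|VX| \leq 2$ directly, landing in the first alternative. Handling this kernel cleanly — and ruling out the intermediate cases where neither $|VX| \leq 2$ nor the complete-bipartite picture holds — is the delicate heart of the proof; everything else is orbit-counting that I would not belabor.
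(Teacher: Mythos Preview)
Your outline correctly applies Corollary~\ref{cor:BuMo} to both $N$ and $M = \mathrm{C}_G(N)$, and the idea of exploiting the two commuting edge-transitive actions to obtain sharp edge-transitivity of $N$ is sound: since $M$ centralises $N$ and is transitive on geometric edges, all edge-stabilisers $N_e$ coincide, so $N_e$ lies in the kernel of the $N$-action on edges, and a non-trivial element of that kernel must invert every edge, which indeed forces $|VX| \leq 2$.

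The genuine gap is the next step. You claim that once $N$ acts sharply transitively on geometric edges with two vertex-orbits $A, B$, a ``counting/orbit argument using that $N_b$ is transitive on $E(b)$'' will show that $X$ is complete bipartite. This does not follow. Take $X = C_6$ (the $6$-cycle), $G = \Aut(C_6) \cong D_{12}$, whose local action is $\Sym(2)$ and hence quasi-primitive, and let $N = \langle r^2, s\rangle \cong \Sym(3)$, where $r$ is rotation by one step and $s$ is a suitable reflection. Then $N$ is normal in $G$, has exactly the two bipartition classes as vertex-orbits, acts sharply transitively on the six geometric edges, and $N_b$ is transitive on $E(b)$ for every vertex $b$ --- yet $C_6$ is not $\mathbf K_{3,3}$. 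So no argument using only those properties of $N$ can succeed. (This does not contradict the corollary: one checks that $\mathrm{C}_G(N) = \langle r^3\rangle$ acts freely on $VX$, so the hypothesis fails.)

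The paper's proof avoids this detour by never isolating sharp edge-transitivity as an intermediate goal. It splits instead on whether $M$, not $N$, is vertex-transitive. If $M$ is vertex-transitive, then $N_x = N_{mx}$ for every $m \in M$ (since $M$ centralises $N$), so $N_x$ fixes every vertex; combined with the transitivity of $N_x$ on $E(x)$, this yields $|VX|\leq 2$. If $M$ has two orbits, then for adjacent $x, y$ one uses that $M_y$ is transitive on $E(y)$ and commutes with $N_x$ to conclude that $N_x$ fixes every neighbour of $y$; hence any two neighbours of $y$ have the same stabiliser in $N$ and therefore the same neighbour-set, which forces $X$ to be complete bipartite. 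Sharp edge-transitivity of $N$ is then a consequence, obtained by the same commuting trick applied to $N_{x,y}$ against $M_x$ and $M_y$. The essential point you are missing is that the complete-bipartite conclusion requires invoking $M$ (via the transitivity of $M_y$ on $E(y)$) a second time; the data you have extracted about $N$ alone are insufficient.
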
	
\begin{proof}
	Let $M = \mathrm{C}_G(N)$. Since $N$ is normal in $G$, so is $M$. In view of the hypotheses, both $M$ and $N$ satisfy the conclusion (ii) in Lemma~\ref{lem:BuMo:LocallyQuasiPrimitive}. 
	
	We now invoke Corollary~\ref{cor:BuMo} for $M$. 
	
	If $M$ is transitive on $VX$, then $N_{x} = N_y$ for any pair of vertices $x, y \in VX$. Since $N_x$ is also transitive on $E(x)$ it follows that $|VX| \leq 2$. 
	
If $M$ is not transitive on $VX$, then $X$ is bipartite and $M$ has two orbits on $VX$. Let $x\neq y$ be adjacent vertices. 


The $M_y$-action on $E(y)$ is transitive by Lemma~\ref{lem:BuMo:LocallyQuasiPrimitive}. Thus, for all neighbours $x'$ of $y$, we have  $N_x = N_{x'}$. Since $N_x$  is transitive on $E(x)$, we see that $N_x$-orbit of $y$ coincides with the set of neighbours of $x$. Since $N_x = N_{x'}$ and  $N_{x'}$  is transitive on $E(x')$, we deduce that $x$ and $x'$ have the same set of neighbours. Similarly, any neighbour $y'$ of $x$ has the same set of neighbours as $y$. Since $X$ is connected, this implies 
that $X$ is a complete bipartite graph. Given a geometric edge $ \{x, y\}$, the stabilizer $N_{x, y}$ is trivial since it commutes with $M$, which is transitive on the geometric edges by Corollary~\ref{cor:BuMo}. 
The conclusion follows since $N$ is transitive on the set of geometric edges by Corollary~\ref{cor:BuMo}. 
\end{proof}

%

We also record the following information about the case where the local action of $G$ is $2$-transitive and $N \leq G$ is a normal subgroup acting freely on $VX$ but not freely on geometric edges. 

\begin{lem}\label{lem:Nfree}
Let $X$ be a connected graph, let $G \leq \Aut(X)$ be a group whose local action is $2$-transitive,  and let $N \leq G$ be a normal subgroup of $G$ acting freely on $VX$ but non-freely on the set of geometric edges of $X$. Let $x \in VX$. Then:
\begin{enumerate}[(i)]
	\item For each $e \in E(x)$, there is a unique element $s_e \in N$ with $s_e(e) = \bar e$ and $s_e^2 = 1$. 
	\item $N$ acts regularly on $VX$. 
	\item $N$ is generated by the set $\{s_e \mid e \in E(x)\}$. 
\item $G_x^{[1]} = \{1\}$. 
\item $G \cong N \rtimes G_x$ and $\mathrm{C}_{G_x}(N)=\{1\}$. 

\item $\mathrm Z(G) \leq N$.
\end{enumerate}
\end{lem}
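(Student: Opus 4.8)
The plan is to exploit the fact that $N$ is normal and acts freely on $VX$ but not freely on geometric edges, combined with $2$-transitivity of the local action, to pin down exactly what $N$ does near a vertex. First I would establish (i): since $N$ does not act freely on geometric edges, there is some geometric edge $\{e_0, \bar e_0\}$ and some nontrivial $n \in N$ with $n\{e_0,\bar e_0\} = \{e_0, \bar e_0\}$; since $N$ acts freely on $VX$ and $n$ cannot fix either endpoint, $n$ must invert the edge, so $n$ swaps $\partial_0 e_0$ and $\partial_1 e_0$ and $n^2$ fixes both endpoints, hence $n^2 = 1$ by freeness on vertices. So there is an involution inverting $\{e_0,\bar e_0\}$. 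Now let $x = \partial_0 e_0$. The set of edges in $E(x)$ that get inverted by some element of $N$ is nonempty; since $N \trianglelefteq G$ and $G_x$ acts $2$-transitively (in particular transitively) on $E(x)$, conjugating by $G_x$ shows that \emph{every} $e \in E(x)$ is inverted by some involution of $N$. Uniqueness of $s_e$: if $s, s'$ both invert $e$, then $ss'$ fixes both endpoints of $e$, so $ss' = 1$ by freeness on $VX$. This also gives that $N$ contains an involution for each edge direction at $x$.

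Next, (ii): $N$ acts freely on $VX$ by hypothesis, so it suffices to see it is transitive. The involutions $s_e$ for $e \in E(x)$ move $x$ to each of its neighbours, so the $N$-orbit of $x$ contains the full $1$-ball $B(x,1)$; by connectedness and an induction on distance (using that $N$ is normal, so its orbit structure is $G$-invariant, and $G$ — or at least the group generated by the $s_e$ as $e$ ranges over all vertices — connects the graph), $N$ is transitive on $VX$, hence sharply transitive. For (iii): let $N_0 = \langle s_e \mid e \in E(x)\rangle \leq N$. The computation in (ii) shows $N_0$ is already transitive on $B(x,1)$, and then transitivity on all of $VX$ follows exactly as above since from each vertex $y = s_{e_1}\cdots s_{e_k}(x)$ one reaches its neighbours by conjugates $s_{e_1}\cdots s_{e_k}\, s_e\, s_{e_k}^{-1}\cdots s_{e_1}^{-1}$ which lie in $N_0$ because the $s_e$ at $x$ already generate it and conjugation by $N_0$ preserves $N_0$ — more cleanly, $N_0$ contains, for every vertex, all the edge-inverting involutions at that vertex, by transitivity of $N_0$ on $VX$ together with the fact that $s_{g(e)} = g s_e g^{-1}$ for $g\in N_0$; these generate a group acting transitively on $VX$ and contained in $N$, and by sharp transitivity of $N$ it equals $N$.

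For (iv): suppose $g \in G_x^{[1]}$, i.e. $g$ fixes $x$ and all its neighbours. Then $g$ commutes with $s_e$ for each $e\in E(x)$? Not immediately, so instead argue: $g$ fixes $x$ and each neighbour $y$ of $x$; then $g s_e g^{-1}$ is an involution of $N$ inverting $e$ (as $g$ fixes both $x$ and $\partial_1 e$), so by uniqueness in (i), $g s_e g^{-1} = s_e$. Hence $g$ centralizes $N_0 = N$, and in particular $g$ fixes the whole $N$-orbit of $x$, which is all of $VX$ by (ii); since a graph automorphism fixing every vertex and (being in a group acting without inversion, or by the same edge argument) every edge is trivial, $g = 1$. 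Then (v): $G_x \cap N = \{1\}$ since $N$ is free on $VX$, and $G = N G_x$ since $N$ is transitive on $VX$ by (ii), giving $G = N \rtimes G_x$; and $\mathrm{C}_{G_x}(N) = \{1\}$ because an element of $G_x$ centralizing $N$ lies in $G_x^{[1]}$ — indeed it fixes $x$ and, commuting with each $s_e$, fixes $s_e(x)$ for every neighbour, so by (iv) it is trivial. Finally (vi): $\mathrm Z(G) \leq \mathrm{C}_G(N)$, and if $z \in \mathrm Z(G)$ then writing $z = ng$ with $n\in N$, $g\in G_x$ (by (v)), the element $g = n^{-1}z$ centralizes $N$ (as $z$ does and $n\in N$), so $g \in \mathrm{C}_{G_x}(N) = \{1\}$, whence $z = n \in N$.

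The main obstacle I anticipate is the bookkeeping in parts (ii)--(iv): the induction on distance from $x$ that upgrades "transitive on $B(x,1)$" to "transitive on $VX$" while staying inside the subgroup $N_0$ generated by the $s_e$ at the single vertex $x$ requires the relation $g s_e g^{-1} = s_{g(e)}$ and a careful argument that these conjugates re-generate the local involutions at \emph{every} vertex; once sharp transitivity of $N$ on $VX$ is in hand (from the freeness hypothesis plus transitivity), parts (v) and (vi) are essentially formal. The role of $2$-transitivity (as opposed to mere transitivity) of the local action is modest here — transitivity of $G_x$ on $E(x)$ suffices for (i)--(vi) — but it is presumably stated in this strength because it is the standing hypothesis of the surrounding results and is genuinely needed elsewhere.
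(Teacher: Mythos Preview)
Your arguments for parts (i)--(v) are correct and follow essentially the same path as the paper's proof (the paper deduces (v) from (ii) first and then uses (v) to get (iv), while you reverse the order, but the content is the same).

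Part (vi), however, contains a genuine error. You write $z = ng$ with $n\in N$, $g\in G_x$, and assert that $g = n^{-1}z$ centralizes $N$ ``as $z$ does and $n\in N$''. But $n\in N$ does \emph{not} say that $n$ centralizes $N$: for $m\in N$ one has
\[
g m g^{-1} \;=\; n^{-1} z m z^{-1} n \;=\; n^{-1} m n,
\]
which equals $m$ for all $m$ only when $n\in \mathrm{Z}(N)$. So your argument establishes (vi) only when $N$ is abelian.

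The paper's proof of (vi) is different and, contrary to your closing remark, does use $2$-transitivity in an essential way: the image of $\mathrm{Z}(G)$ under the quotient map $G\to G/N\cong G_x$ lands in $\mathrm{Z}(G_x)$; since $G_x$ acts faithfully (by (iv)) and $2$-transitively on $E(x)$ with $|E(x)|\geq 3$, one has $\mathrm{Z}(G_x)=\{1\}$, whence $\mathrm{Z}(G)\leq N$.

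To see that mere transitivity of the local action does not suffice for (vi), take $N=\Sym(3)$ acting on its Cayley graph $X$ with respect to the three transpositions (so $X\cong \mathbf K_{3,3}$), and let $G_x=\langle c\rangle\cong \mathbf{C}_3$ act on $N$ by conjugation by $(1\,2\,3)$. Then $G=N\rtimes G_x\leq \Aut(X)$ satisfies all hypotheses of the lemma except $2$-transitivity, yet the element $\big((1\,3\,2),\,c\big)$ is central in $G$ and does not lie in $N$. Thus (vi) fails in this example, and your claim that transitivity of $G_x$ on $E(x)$ suffices for (i)--(vi) is incorrect.
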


\begin{proof}
The hypotheses on $N$ imply the existence of an edge $f \in EX$ and an element $s \in N$ with $s(f)=\bar f$. Since $N$ is free on $VX$ we have $s^2=1$. Let $x = \partial_0(f)$. For each $e \in E(x)$ there is $g \in G_x$ with $g(f)=e$. Set $s_e = gsg^{-1} \in N$. Thus we have proved Assertion~(i) for some vertex $x$, and the assertion will follow for all vertices as soon as we show that $N$ is vertex-transitive. The group $\langle s_e \mid e \in E(x)\rangle$ contains an element mapping $x$ to each of its neighbours. Since $X$ is connected, it follows that the latter group is transitive on $VX$. Thus $N$ is transitive and Assertions~(i), (ii) and~(iii) follow since $N$ acts freely on $VX$ by hypothesis. Moreover (v) is a consequence of (ii). Finally, observe that an element  $g \in G_x^{[1]}$ fixes each  $e \in E(x)$, and thus centralizes $s_e$. Thus $g \in \mathrm{C}_{G_x}(N)$ by (iii).  Thus $g=1$ by (v), and (iv) holds. 

Let $Z = \mathrm Z(G)$ be the center of $G$. Its image under the projection $G \to G/N \cong G_x$ is a central subgroup of $G_x$. The group $G_x$ acts $2$-transitively on $E(x)$, and that action is faithful by (iv). It follows that $\mathrm Z(G_x)=\{1\}$. Hence $Z \leq N$ and (vi) holds. 
\end{proof}

\subsection{Vertex stabilizers of locally $2$-transitive actions}

The following important result due to V.~Trofimov and R.~Weiss provides  very precise information about vertex-strabilizers for proper  vertex-transitive locally $2$-transitive actions of discrete groups on locally finite graphs. It plays a crucial role in our considerations. 

\begin{thm}\label{thm:TrofimovWeiss}
Let $G \leq \Aut(X)$ be a vertex-transitive automorphism group of a connected locally finite graph  $X$. Let $(v, w)$ be an edge of $X$. Suppose that the local action  is $2$-transitive, and that  the stabilizer $G_v$ is finite. Then:
\begin{enumerate}[(i)]
	\item {\upshape (Trofimov--Weiss \cite[Theorem~1.4]{TrofimovWeiss})} We have
	$$G_v^{[5]}  \cap G_w^{[5]}= \{1\}.$$ 
	In particular  $G_v^{[6]} = \{1\}$. 
	
	\item {\upshape (Trofimov--Weiss \cite[Theorem~1.3 and~2.3]{TrofimovWeiss})} If   $G_v^{[1]}  \cap G_w^{[1]}  \neq  \{1\}$ (e.g. if $G_v^{[2]} \neq \{1\}$), then the local action at $v$ contains a normal subgroup isomorphic to $\PSL_n(\FF_q)$ in its natural action on the points of the $n-1$-dimensional projective space over the finite field $\FF_q$ of order~$q$. Moreover   $G_v^{[1]}  \cap G_w^{[1]}$ is a $p$-group, where $p$ is the characteristic of $\FF_q$.

		\item  {\upshape (R.~Weiss \cite[Theorem~1.1 and~1.4]{Weiss79})} If  the local action at $v$ contains a normal subgroup isomorphic to $\PSL_2(\FF_q)$ in its natural action on the points of the projective line   over a finite field $\FF_q$, then there is $s \in \{2, 3, 4, 5, 7\}$ such that for any geodesic segment $(v_1, v_2, \dots, v_s)$ of length $s-1$, we have
		$$G_{v_1}^{[1]} \cap G_{v_2}^{[1]}  \cap G_{v_3} \cap \dots \cap G_{v_s} = \triv.$$
Moreover if $\mathrm{char}(\FF_q) \geq 5$ then $s \leq 4$, and  if $\mathrm{char}(\FF_q) = 2$ then $s \leq 5$.  		

\end{enumerate}

\end{thm}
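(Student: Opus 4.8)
The three assertions are reformulations of published results, so the plan is not to reprove anything but to match the hypotheses and conclusions of Trofimov--Weiss \cite{TrofimovWeiss} and Weiss \cite{Weiss79} with the uniform statement above, and to deduce the two parenthetical remarks by elementary ball-inclusion arguments. First I would note that the standing hypotheses here --- $X$ connected and locally finite, $G$ vertex-transitive with finite vertex stabilizer and $2$-transitive local action --- are exactly the running hypotheses of \cite{TrofimovWeiss}, so \cite[Theorem~1.4]{TrofimovWeiss} applies verbatim and yields the displayed equation in~(i).

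For the ``in particular'' clause of~(i): if $w$ is a neighbour of $v$, then every vertex at distance $\le 5$ from $w$ lies at distance $\le 6$ from $v$, i.e. $B(w,5)\subseteq B(v,6)$, whence $G_v^{[6]}\le G_w^{[5]}$; combined with the trivial inclusion $G_v^{[6]}\le G_v^{[5]}$ this gives $G_v^{[6]}\le G_v^{[5]}\cap G_w^{[5]}=\{1\}$. The same observation handles the parenthesis in~(ii): since $B(w,1)\subseteq B(v,2)$ for a neighbour $w$ of $v$, every element of $G_v^{[2]}$ lies in $G_v^{[1]}\cap G_w^{[1]}$, so $G_v^{[2]}\ne\{1\}$ forces $G_v^{[1]}\cap G_w^{[1]}\ne\{1\}$. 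The substantive content of~(ii) --- that $G_v^{[1]}\cap G_w^{[1]}\ne\{1\}$ forces the local action at $v$ to contain a normal subgroup isomorphic to $\PSL_n(\FF_q)$ in its natural action on projective space, and that $G_v^{[1]}\cap G_w^{[1]}$ is then a $p$-group with $p=\mathrm{char}(\FF_q)$ --- is \cite[Theorems~1.3 and~2.3]{TrofimovWeiss}; here I would just verify that the ``exceptional'' configurations classified there are precisely the $\PSL_n$ case under the extra assumption that the local action is $2$-transitive.

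Assertion~(iii) is the specialization of \cite[Theorems~1.1 and~1.4]{Weiss79} to the present situation: when the local action at $v$ contains $\PSL_2(\FF_q)$ in its natural $2$-transitive action on the projective line, Weiss's structural analysis of vertex stabilizers bounds the length of the longest ``fixed geodesic'', the possible values being controlled by $s\in\{2,3,4,5,7\}$, with the sharper bounds $s\le 4$ when $\mathrm{char}(\FF_q)\ge 5$ and $s\le 5$ when $\mathrm{char}(\FF_q)=2$. The main thing to get right --- and the only point I expect to require care --- is the translation of conventions: Weiss states his conclusions in terms of an $s$-arc-transitivity / amalgam parameter, and one must check that the vanishing of $G_{v_1}^{[1]}\cap G_{v_2}^{[1]}\cap G_{v_3}\cap\dots\cap G_{v_s}$ along a geodesic of length $s-1$ is the correct reformulation. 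Thus the ``proof'' is essentially a dictionary between \cite{TrofimovWeiss}, \cite{Weiss79} and the statement above, together with the two trivial ball inclusions noted in the previous paragraph; I do not expect any genuinely new argument to be needed.
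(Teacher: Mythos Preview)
Your proposal is correct and matches the paper's treatment: the theorem is stated in the paper purely as a compilation of cited results from \cite{TrofimovWeiss} and \cite{Weiss79}, with no proof given, so there is nothing to compare beyond the citation-matching and the two trivial ball-inclusion deductions you spell out. Your explanation of those deductions (for the ``in particular'' in (i) and the ``e.g.'' in (ii)) is exactly right.
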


\subsection{The $2$-transitive groups of degree~$\leq 6$}

In the proof of Theorem~\ref{thm:Main}, we will encounter several case-by-case discussions depending notably on the list of $2$-transitive groups of small degree. For the reader's convenience, that list is recalled in Table~\ref{tab:Small2Trans}. 

\begin{table}[h]
$$\begin{array}{c@{\hspace{.9cm}}c@{\hspace{.9cm}}c}
\text{Degree } d &  G \leq \Sym(d) &  |G|\\
\hline
3 & \Sym(3)  \cong \mathbf{C}_3 \rtimes \mathbf{C}_2  \cong \FF_3\rtimes \FF_3^* & 6\\
4 & \Alt(4) \cong  \PSL_2(\FF_3)  \cong \FF_4\rtimes \FF_4^*  & 12\\
4 & \Sym(4) \cong \PGL_2(\FF_3) & 24\\
5 & \mathbf{C}_5 \rtimes \mathbf{C}_4 \cong \FF_5\rtimes \FF_5^* & 20\\
5& \Alt(5) \cong \PSL_2(\FF_4) & 60\\
5 & \Sym(5) \cong \PGammaL_2(\FF_4) & 120\\
6 & \Alt(5) \cong \PSL_2(\FF_5) & 60\\
6 & \Sym(5) \cong \PGL_2(\FF_5) & 120\\
6 & \Alt(6) & 360\\
6 & \Sym(6) & 720
\end{array}$$
\caption{$2$-transitive groups of degree~$\leq 6$} \label{tab:Small2Trans}
\end{table}

Keeping that list in mind, we now present two consequences of  Theorem~\ref{thm:TrofimovWeiss} needed for the proof of Theorem~\ref{thm:Main}.  The following one should be compared with   \cite[Lemma 3.5.1]{BuMo1}.

\begin{cor}\label{cor:AlmostSimplePointStab}
	Let $G \leq \Aut(X)$ be a vertex-transitive automorphism group of a connected locally finite graph  $X$ of degree $d$ with finite vertex-stabilizers. Suppose that the local action $F \leq \Sym(d)$ is $2$-transitive. Suppose moreover that at least one of the following conditions holds:
	\begin{enumerate}[(a)]
		\item  the point stabilizer $F_p$ is almost simple. 
		\item $F$ is sharply $2$-transitive  and $d \leq 5$. 
	\end{enumerate}
	Then for $v \in VX$, we have $G_v^{[2]} = \triv$, and the group   $G_v^{[1]}$ is isomorphic to a normal subgroup of $F_p$. Furthermore,   if (a) holds and  if   $G_v^{[1]}\neq \triv$ then   $G_v^{[1]}$ is  almost simple with socle isomorphic to $\soc(F_p)$.
\end{cor}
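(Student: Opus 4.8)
The plan is to analyze $G_v^{[1]}$ by combining Theorem~\ref{thm:TrofimovWeiss} with the structure of the $2$-transitive group $F$. First I would show $G_v^{[2]} = \triv$. Suppose not; then $G_v^{[1]} \cap G_w^{[1]} \neq \triv$ for an edge $(v,w)$, so Theorem~\ref{thm:TrofimovWeiss}(ii) forces the local action $F$ to contain a normal subgroup $\cong \PSL_n(\FF_q)$ in its natural projective action. In case (a), the point stabilizer $F_p$ is almost simple; but the natural projective action of $\PSL_n(\FF_q)$ (for $n \geq 2$) has point stabilizers that are parabolic subgroups, which are solvable only when $n=2$, and in any case are never almost simple — so case (a) is incompatible with $F \geq \PSL_n(\FF_q)$ acting projectively, giving a contradiction. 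In case (b), $F$ is sharply $2$-transitive with $d \geq 5$; but $\PSL_n(\FF_q)$ acting on projective space is sharply $2$-transitive only in degenerate tiny cases ($\PSL_2(\FF_2)$ on $3$ points, $\PSL_2(\FF_3)$ on $4$ points), which are excluded by $d \geq 5$, again a contradiction. Hence $G_v^{[2]} = \triv$ in both cases.

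Next, granting $G_v^{[2]} = \triv$, I would identify $G_v^{[1]}$ with a subgroup of $F_p$. The quotient $G_v / G_v^{[1]}$ is the local action $F$, acting on $E(v)$, and $G_v^{[1]}$ is the kernel of this action. For a neighbour $w$ of $v$, the stabilizer $G_w$ contains $G_v^{[1]}$ up to the edge, and the action of $G_v^{[1]}$ on $E(w)$ gives a homomorphism $G_v^{[1]} \to F_p$ (identifying the local action at $w$ fixing the edge back to $v$ with $F_p$). Its kernel is contained in $G_v^{[1]} \cap G_w^{[1]} \subseteq G_v^{[2]} = \triv$ (using that $G_v^{[1]}$ already fixes all edges at $v$, so fixing $E(w)$ for all neighbours $w$ gives an element of $G_v^{[2]}$). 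Thus $G_v^{[1]}$ embeds in $F_p$. Since $G_v^{[1]}$ is normalized by $G_v$ and $G_v$ surjects onto $F$, the image of $G_v^{[1]}$ in $F_p$ is normalized by all of $F$ sitting over it; more carefully, $G_v^{[1]} \normal G_v$ gives that its image is a normal subgroup of $F_p$ — one checks this via the conjugation action, since $F_p$ is exactly the image of $G_{v,w}$ in the local action at $w$ and $G_v^{[1]} \normal G_{v,w}$.

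Finally, if (a) holds and $G_v^{[1]} \neq \triv$, I would pin down its socle. We have a nontrivial normal subgroup $G_v^{[1]} \normal F_p$ with $F_p$ almost simple, say with simple socle $S = \soc(F_p)$. Any nontrivial normal subgroup of an almost simple group contains its socle, so $S \leq G_v^{[1]}$, whence $G_v^{[1]}$ sits between $S$ and $F_p$ and is therefore itself almost simple with $\soc(G_v^{[1]}) = S = \soc(F_p)$. This completes the argument.

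The main obstacle I anticipate is the bookkeeping in the middle step: correctly setting up the embedding $G_v^{[1]} \hookrightarrow F_p$ and verifying that the image is genuinely \emph{normal} in $F_p$ (not merely a subgroup). This requires being careful about which stabilizers act on which edge-sets and using $G_v^{[1]} \normal G_v$ together with vertex-transitivity to transport the normality down to the local action at a neighbour; the triviality $G_v^{[2]} = \triv$ established in the first step is exactly what makes the embedding injective. The case analysis in the first step is routine once one recalls the point-stabilizer structure of projective linear groups, and the socle step is a standard fact about almost simple groups.
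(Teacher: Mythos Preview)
Your approach is essentially the same as the paper's, but there is one slip worth flagging. In your second step you write $G_v^{[1]} \cap G_w^{[1]} \subseteq G_v^{[2]}$; this inclusion goes the wrong way (the correct one is $G_v^{[2]} \subseteq G_v^{[1]} \cap G_w^{[1]}$), and your parenthetical justification conflates fixing $E(w)$ for \emph{one} neighbour with fixing $E(w')$ for \emph{all} neighbours $w'$. Fortunately your own first step already proves the stronger fact you actually need: the argument ``if $G_v^{[1]} \cap G_w^{[1]} \neq \triv$ then apply Theorem~\ref{thm:TrofimovWeiss}(ii) and contradict (a) or (b)'' yields $G_v^{[1]} \cap G_w^{[1]} = \triv$ directly, from which $G_v^{[2]} = \triv$ follows as a consequence rather than the other way round. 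This is exactly how the paper argues: it establishes $G_v^{[1]} \cap G_w^{[1]} = \triv$ first, then uses the projection $G_{v,w} \to G_{v,w}/G_w^{[1]} \cong F_p$ to embed $G_v^{[1]}$ as a normal subgroup of $F_p$ (normality coming from $G_v^{[1]} \normal G_{v,w}$, just as you say). With that one correction your plan matches the paper's proof.
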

\begin{proof}
	Let $w \in VX$ be adjacent of $v$. Each of the conditions (a) and (b)  implies that $G_v^{[1]}  \cap G_w^{[1]}  = \triv$  by Theorem~\ref{thm:TrofimovWeiss}(ii) (see Table~\ref{tab:Small2Trans}). The groups $G_v^{[1]} $ and $G_w^{[1]}$ are both normal subgroups of $G_{v, w}$, and the quotient  $G_{v, w}/ G_w^{[1]}$ is isomorphic to $F_p$. The image of  $G_v^{[1]} \leq   G_{v, w}$ under the projection $G_{v, w} \to G_{v, w}/ G_w^{[1]}$ is injective (since $G_v^{[1]}  \cap G_w^{[1]}  = \triv$) and isomorphic to a normal subgroup of the  group $F_p$. The required conclusions follow. 
\end{proof}

The various possible exceptions appearing in Theorem~\ref{thm:Main} find their roots in the following result. 

\begin{cor}\label{cor:TroWeiss}
	Let $G \leq \Aut(X)$ be a vertex-transitive automorphism group of a connected locally finite graph  $X$ of degree $d$ with finite vertex-stabilizers. Suppose that the local action $F \leq \Sym(d)$ is $2$-transitive, and moreover that $F \geq \Alt(d)$ if $d \geq 7$.  Let   $x \in VX$. Then one of the following assertions holds:
	\begin{enumerate}[(i)]
		\item $d \geq 6$ and $|G_x| \in \{\frac{d!} 2, d!, \frac{d!(d-1)!} 4,  \frac{d!(d-1)!} 2,  {d!(d-1)!} \}$. 
		
		\item $d=3$ and $|G_x| \in \{6n  \mid  n \text{ divides } 2^3\}$. 
		
		\item $d=4$ and $|G_x| \in \{12n \mid n \text{ divides } 2^2 \cdot 3^5\}$. 

		\item $d=5$ and $F = \mathbf{C}_5 \rtimes \mathbf{C}_4$, then $|G_x| \in \{20, 40, 80\}$. 
				
		\item $d=5$ and $\soc(F) = \Alt(5)$ and $|G_x| \in \{60n \mid n \text{ divides } 2^{8} \cdot 3\}$. 
		
		\item $d=6$, $\soc(G) = \PSL_2(\FF_5)$ and $|G_x| \in \{60n \mid n \text{ divides } 2^{3} \cdot 5^2\}$. 
	\end{enumerate}
\end{cor}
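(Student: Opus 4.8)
The strategy is to control $|G_x|$ via the exact sequence $1 \to G_x^{[1]} \to G_x \to F \to 1$, so that $|G_x| = |F| \cdot |G_x^{[1]}|$, and then to bound the order of the ``kernel of the local action'' $G_x^{[1]}$ using Theorem~\ref{thm:TrofimovWeiss} together with the list of $2$-transitive groups of small degree recalled in Table~\ref{tab:Small2Trans}. First I would split according to whether $d \geq 7$ or $d \leq 6$. If $d \geq 7$, then $F \geq \Alt(d)$, so the point stabilizer $F_p$ is almost simple (it is $\Alt(d-1)$ or $\Sym(d-1)$), hence Corollary~\ref{cor:AlmostSimplePointStab} applies: $G_x^{[2]} = \triv$ and $G_x^{[1]}$ is isomorphic to a normal subgroup of $F_p \in \{\Alt(d-1), \Sym(d-1)\}$, and if it is nontrivial it is almost simple with socle $\Alt(d-1)$. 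The normal subgroups of $\Sym(d-1)$ for $d-1 \geq 5$ are $\triv$, $\Alt(d-1)$, $\Sym(d-1)$; so $|G_x^{[1]}| \in \{1, \tfrac{(d-1)!}{2}, (d-1)!\}$. Combined with $|F| \in \{\tfrac{d!}{2}, d!\}$, this gives exactly the five possible values $\{\tfrac{d!}{2}, d!, \tfrac{d!(d-1)!}{4}, \tfrac{d!(d-1)!}{2}, d!(d-1)!\}$, which is assertion~(i). (One should note that the case $d = 6$, $F \geq \Alt(6)$ also falls into this analysis, so (i) is stated for $d \geq 6$.)

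For $d \leq 6$ I would go through Table~\ref{tab:Small2Trans} case by case. For each $2$-transitive group $F$ of degree $d \in \{3,4,5,6\}$ one needs to know the possible orders of $G_x^{[1]}$. The key input is Theorem~\ref{thm:TrofimovWeiss}(ii): if $G_v^{[1]} \cap G_w^{[1]} \neq \triv$ then $F$ must contain a normal subgroup $\PSL_n(\FF_q)$ in its natural projective action, and $G_v^{[1]} \cap G_w^{[1]}$ is then a $p$-group for $p = \mathrm{char}(\FF_q)$. Scanning Table~\ref{tab:Small2Trans}: degree $3$ gives $\PSL_2(\FF_2) \cong \Sym(3)$ (so $p=2$); degree $4$ gives $\PSL_2(\FF_3) \cong \Alt(4)$ (so $p=3$) inside both $\Alt(4)$ and $\Sym(4)$; degree $5$ with $\soc(F) = \Alt(5) \cong \PSL_2(\FF_4)$ gives $p=2$, while $F = \mathbf{C}_5 \rtimes \mathbf{C}_4$ is sharply $2$-transitive of degree $5 \geq 5$, so Corollary~\ref{cor:AlmostSimplePointStab}(b) forces $G_x^{[2]} = \triv$ and $G_x^{[1]} \hookrightarrow F_p = \mathbf{C}_4$; degree $6$ with $\soc(F) = \PSL_2(\FF_5) \cong \Alt(5)$ gives $p = 5$, while $\soc(F) = \Alt(6)$ has $F_p$ almost simple and is handled as in (i). In each remaining case one iterates: $G_x^{[1]}/(G_x^{[1]} \cap G_w^{[1]})$ embeds as a normal subgroup of $F_p$, and $G_x^{[1]} \cap G_w^{[1]}$ is a $p$-group on which one again pushes down a level, using Theorem~\ref{thm:TrofimovWeiss}(iii) (Weiss's bound $s \in \{2,3,4,5,7\}$, with $s \leq 4$ if $p \geq 5$ and $s \leq 5$ if $p = 2$) to cap the number of levels, hence the $p$-part of $|G_x^{[1]}|$. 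This yields: $d = 3$, $|G_x^{[1]}|$ a power of $2$ dividing $2^3$ (Weiss with $p=2$, $s \leq 5$, geodesic of length $\leq 4$ through $\leq 4$ edges in the $3$-regular tree bounds the $2$-group); $d=4$, $|G_x^{[1]}|$ divides $2^2 \cdot 3^5$ accounting for the $\PSL_2(\FF_3)$-factors and the $3$-group tail; $d=5$ sharply $2$-transitive, $|G_x^{[1]}|$ divides $4$; $d = 5$, $\soc(F)=\Alt(5)$, $|G_x^{[1]}|$ divides $2^8 \cdot 3$; $d=6$, $\soc(G) = \PSL_2(\FF_5)$, $|G_x^{[1]}|$ divides $2^3 \cdot 5^2$. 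Multiplying by $|F| \in \{6\}$, $\{12, 24\}$, $\{20\}$, $\{60, 120\}$, $\{60, 120\}$ respectively and collecting gives assertions (ii)--(vi).

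The main obstacle will be pinning down the exact $p$-powers bounding $|G_x^{[1]}|$ in the degree $4$, $5$ and $6$ cases. This requires combining Weiss's structural theorem on groups with local action containing $\PSL_2(\FF_q)$ (Theorem~\ref{thm:TrofimovWeiss}(iii)) with a careful analysis of how the successive quotients $G_{v_i}^{[i-1]}/G_{v_i}^{[i]}$ sit as $p$-groups normalized by the point stabilizer's action on a ball — in effect one must bound the length of the chain $G_x^{[1]} \geq G_x^{[2]} \geq \cdots$ and the order of each quotient, which is where the specific exponents $2^3$, $2^2 \cdot 3^5$, $2^8 \cdot 3$, $2^3 \cdot 5^2$ come from. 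The regularity of the tree of the universal cover (every ball of radius $r$ has a fixed size depending only on $d$) is what makes these bounds finite and explicit; one should be careful that $G_x^{[1]}$ may be strictly larger than $G_v^{[1]} \cap G_w^{[1]}$, so the recursion genuinely has two ingredients at each step (a normal subgroup of $F_p$ and a $p$-group), and the asserted divisibility statements are the cumulative product over all levels. I would organize this as a short lemma computing, for each of the relevant $2$-transitive groups of degree $\leq 6$, the maximal possible order of $G_x^{[1]}$, and then the corollary follows by multiplication with $|F|$.
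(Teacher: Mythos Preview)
Your approach is correct and coincides with the paper's: split off the cases where Corollary~\ref{cor:AlmostSimplePointStab} applies (namely $F \geq \Alt(d)$ with $d \geq 6$, and $F \cong \mathbf{C}_5 \rtimes \mathbf{C}_4$), and for the remaining local actions---all of which have $\PSL_2(\FF_q)$ as a normal subgroup for $q \in \{2,3,4,5\}$---invoke Theorem~\ref{thm:TrofimovWeiss}(iii) to bound the chain $G_x^{[1]} \geq G_{v_1}^{[1]}\cap G_{v_2}^{[1]} \geq \dots$ and read off the divisors of $|G_x|$. The paper's own proof is a two-sentence pointer to exactly these two ingredients; your write-up simply unpacks the numerics (the filtration $G_{v_1}^{[1]}/(G_{v_1}^{[1]}\cap G_{v_2}^{[1]}) \hookrightarrow F_p$, the $p$-group $G_{v_1}^{[1]}\cap G_{v_2}^{[1]}$ of order dividing $(d-1)^{s-3}$, and the bound on $s$ from Weiss) that the paper leaves implicit.
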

\begin{proof}
If $F = \Alt(d)$ or $\Sym(d)$ with $d \geq 6$, we must have (i) by Corollary~\ref{cor:AlmostSimplePointStab}. Similarly, if $d=5$ and $F = \mathbf{C}_5 \rtimes \mathbf{C}_4$ then  $|G_x| \in \{20, 40, 80\}$ by Corollary~\ref{cor:AlmostSimplePointStab}. In the remaining cases,   we apply Theorem~\ref{thm:TrofimovWeiss}(iii) using the list in Table~\ref{tab:Small2Trans}. 
\end{proof}

\begin{rmk}\label{rem:sharpen}
The structure of $G_x$ in the case where $d \leq 6$ can be described more precisely, see Theorems~(1.2) and~(1.3) in \cite{Weiss79}. Those results could be used to sharpen slightly the range of values appearing in Corollary~\ref{cor:TroWeiss}, and hence also those in Theorem~\ref{thm:Main}; we will not perform that sharpening here. 
\end{rmk}

%
%
%

\subsection{Finite simple $\{2, 3, 5\}$-groups} 

The following result is a consequence of the CFSG. 
\begin{prop}[{\cite[Theorem~III(1) and Table~1]{HuppertLempken}}]\label{prop:HL}
Let $S$ be a non-abelian finite simple group such that the only prime divisors of $|S|$ are $2, 3$ and $5$. Then
$S$ is isomorphic to $\Alt(5)$, $\Alt(6)$ or $\mathrm{PSp}_4(3) \cong U_4(2)$, respectively of order $2^2 \cdot 3 \cdot 5$, $2^3 \cdot 3^2 \cdot 5$ and $2^6 \cdot 3^4 \cdot 5$. 
\end{prop}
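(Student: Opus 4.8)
\textbf{Proof plan for Proposition~\ref{prop:HL}.} Since the statement is quoted directly from \cite{HuppertLempken}, the task is really to indicate how one would reconstruct it from the Classification of the Finite Simple Groups; I would not reprove it from scratch but would sketch the reduction. The plan is to run through the families in the CFSG list and impose the constraint that $|S|$ is divisible by no prime other than $2,3,5$, i.e. $S$ is a $\{2,3,5\}$-group. First I would dispatch the alternating groups: $|\Alt(n)| = n!/2$, and $n!$ is divisible by $7$ as soon as $n \geq 7$, so only $\Alt(5)$ and $\Alt(6)$ survive (with $\Alt(5)$ having order $2^2\cdot 3\cdot 5$ and $\Alt(6)$ order $2^3\cdot3^2\cdot5$). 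The sporadic groups can all be checked by inspecting their known orders; every one of the $26$ sporadic groups has order divisible by some prime $\geq 7$ (indeed by $7$, $11$, or $13$ at the very least), so none occurs.

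The bulk of the work is the groups of Lie type. Here I would use the standard order formulas: for a group of Lie type of (untwisted) rank $\ell$ over $\FF_q$, the order is $q^N$ times a product of cyclotomic-type factors $q^{d_i}-1$ (or $q^{d_i}+1$ in the twisted cases), where the exponents $d_i$ run over the degrees of the Weyl group. The key arithmetic input is \emph{Zsygmondy's theorem}: for $q^m - 1$ with $q$ a prime power, there is a primitive prime divisor (one not dividing $q^k-1$ for $k<m$) except for the short list of exceptions $(q,m)$ with $q^m-1 \in \{2^6-1\} \cup \{q^2-1 : q \text{ Mersenne-related}\}$, etc. A primitive prime divisor of $q^m-1$ for $m \geq 3$ is automatically $\geq m+1 \geq 4$, hence $\geq 5$, and is $\geq 7$ once $m \geq 6$; this forces the rank and the field to be very small. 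Concretely: if the rank is large, some factor $q^{d_i}-1$ with $d_i$ large appears and contributes a prime $\geq 7$. So I would bound the rank (essentially $\ell \leq 3$ or so), then for each remaining type ($A_1, A_2, A_3, B_2=C_2, G_2$, the twisted forms ${}^2A_2, {}^2A_3, {}^2B_2, {}^2G_2$, and a few more) I would solve the finitely many resulting Diophantine conditions on $q$ by hand. For $\PSL_2(q)$: $|\PSL_2(q)| = q(q^2-1)/\gcd(2,q-1)$, and one needs both $q-1$ and $q+1$ to be $\{2,3,5\}$-numbers and $q$ a $\{2,3,5\}$-prime-power; this quickly yields $q \in \{4,5,9\}$, giving $\Alt(5) \cong \PSL_2(4)\cong\PSL_2(5)$ and $\PSL_2(9)\cong\Alt(6)$. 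For $\PSL_3(q), \PSU_3(q), \PSL_4(q), \PSp_4(q)$ and the small exceptional types, the same $q-1, q+1, q^2+q+1, q^2+1$ conditions cut the candidates down to a handful of $q$, and checking them leaves only $\PSp_4(3)\cong U_4(2)$ of order $2^6\cdot 3^4\cdot 5$; the remaining candidates are eliminated because, e.g., $q^2+q+1$ or $q^3+1$ introduces a prime $\geq 7$, or because the group is non-simple.

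The main obstacle is organizational rather than conceptual: one must be systematic about the Lie-type case analysis so that no family is overlooked, and one must handle the Zsygmondy exceptions carefully (for instance $q=2, m=6$, where $2^6-1 = 63 = 7\cdot 9$ has no primitive prime divisor but still contributes the prime $7$, which is actually helpful for \emph{excluding} groups, and $q=8, m=2$ where $8^2-1 = 63$ again brings in $7$). Since the conclusion and the precise orders are already recorded in \cite[Theorem~III(1) and Table~1]{HuppertLempken}, the honest course in the paper is to cite that reference and merely note that the result is a consequence of the CFSG via the arithmetic of Lie-type group orders together with Zsygmondy's theorem; a full self-contained proof would occupy several pages of routine case-checking that adds nothing to the present work.
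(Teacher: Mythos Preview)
The paper gives no proof of this proposition at all: it is stated with a bare citation to \cite{HuppertLempken} and then used as a black box. Your proposal correctly anticipates this---you explicitly conclude that ``the honest course in the paper is to cite that reference''---and the sketch you supply (CFSG case analysis, alternating and sporadic groups handled by inspection, Lie type groups bounded via Zsygmondy's theorem on the cyclotomic factors of their orders) is the standard route and is accurate in its details; so there is nothing to correct.
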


\subsection{Subgroups of a finite simple group involving all its primes}\label{sec:LPS}

Given a finite set $X$, we denote by $\pi(X)$ the set of prime divisors of $|X|$. The following important  result will be crucial to our purposes. 

\begin{thm}[{Liebeck--Praeger--Saxl \cite[Corollary~5]{LPS}}] \label{thm:LPS}
Let $G$ be a finite almost simple group with socle $N$. Let $M \leq G$ be a subgroup not containing $N$ such that $\pi(M) \supseteq \pi(N)$. Then the possibilities for $N$ and $M$ are all listed in \cite[Table~10.7]{LPS}.
\end{thm}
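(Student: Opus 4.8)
The statement to establish is Theorem~\ref{thm:LPS}, i.e. the Liebeck--Praeger--Saxl classification of pairs $(G, M)$ with $G$ almost simple with socle $N$, $M \leq G$ not containing $N$, and $\pi(M) \supseteq \pi(N)$. Of course, the ``real'' proof is the one in \cite{LPS}, which is a substantial piece of work relying on the CFSG; here I will only outline the strategy one would follow, which is the strategy of loc.\ cit. The overarching plan is: (1) reduce to the case where $M$ is maximal in $G$; (2) enumerate the maximal subgroups of $G$ using the CFSG together with the general structure theory (Aschbacher's classes for classical groups, the Borel--Tits and parabolic analysis for exceptional and Lie-type groups, the Atlas/explicit lists for sporadic and alternating groups); (3) for each family of maximal subgroups $M$, compare $\pi(M)$ with $\pi(N)$ and retain exactly those $M$ that capture every prime dividing $|N|$.

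\textbf{Step 1: reduction to maximal $M$.} First I would note that if $\pi(M) \supseteq \pi(N)$ and $M \leq M' \leq G$, then a fortiori $\pi(M') \supseteq \pi(N)$; so it suffices to classify the \emph{maximal} subgroups $M$ with this property, together with the information (for applications) of how large $M$ must be, and then every subgroup in question is contained in one of these. One also reduces, by replacing $G$ with a suitable almost simple overgroup inside $\Aut(N)$, to controlling $\pi$ purely in terms of $N$: since $\pi(N) \subseteq \pi(G) \subseteq \pi(\Aut(N))$ and $|\Out(N)|$ has rather restricted prime divisors, the condition $\pi(M) \supseteq \pi(N)$ is the essential one. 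The point of this step is that the classification then becomes a finite bookkeeping task over the list of maximal subgroups.

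\textbf{Step 2: case division by the isomorphism type of $N$.} Here the CFSG enters decisively. For $N = \Alt(n)$, the maximal subgroups are described by the O'Nan--Scott theorem; one checks that an intransitive or imprimitive subgroup, or $\Sym(k)\times\Sym(n-k)$ etc., typically misses a prime (a prime $p$ with $n/2 < p \leq n$, which exists for most $n$ by Bertrand), so only a few small $n$ and a handful of primitive subgroups survive. For $N$ of Lie type, one splits according to Aschbacher's classes $\mathcal C_1,\dots,\mathcal C_8$ and $\mathcal S$: parabolic subgroups (class $\mathcal C_1$) tend to contain a full Sylow for the defining characteristic but may lose a prime dividing one of the ``missing'' cyclotomic factors; the key tool is Zsygmondy's theorem, which guarantees, for $q^e - 1$ with few exceptions, a primitive prime divisor that forces $|M|$ to be divisible by it, thereby eliminating most $M$. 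The surviving cases are those where the relevant Zsygmondy primes happen to coincide or fail to exist (the small exceptions in Zsygmondy's theorem), plus a controlled list of subfield and classical-type subgroups; these assemble into \cite[Table~10.7]{LPS}. For $N$ sporadic one simply consults the known maximal-subgroup lists.

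\textbf{Main obstacle.} The hard part, and the reason this is a theorem rather than an exercise, is the Lie-type analysis: bounding $\pi(M)$ from below for every Aschbacher class uniformly in the rank and the field size, and in particular handling the class $\mathcal S$ of almost simple subgroups not of geometric type, where one must recursively invoke the classification of small-dimensional representations of simple groups and their prime content. Controlling the finitely many genuine exceptions — where Zsygmondy primes collide or are absent (e.g.\ $q^6-1$ issues, the Mersenne/Fermat coincidences, very small $q$) — and verifying that each such exceptional $(N,M)$ really does satisfy $\pi(M) \supseteq \pi(N)$ is exactly the delicate case-by-case work carried out in \cite{LPS}, and I would not attempt to reproduce it here; for the purposes of the present paper we take Theorem~\ref{thm:LPS} and the accompanying \cite[Table~10.7]{LPS} as a black box supplied by that work.
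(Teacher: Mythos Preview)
Your proposal is appropriate: the paper does not prove Theorem~\ref{thm:LPS} at all but simply quotes it as a black box from \cite[Corollary~5]{LPS}, exactly as you conclude in your final sentence. Your outline of the Liebeck--Praeger--Saxl strategy (reduction to maximal subgroups, CFSG case division, Zsygmondy primes) is a reasonable sketch of what happens in \cite{LPS}, but none of it appears in the present paper, so there is nothing further to compare.
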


The following consequence, that can be extracted from the list given by Liebeck--Praeger--Saxl, will be sufficient for us.  (Extra caution is needed in view of the exceptional isomorphisms between small finite simple groups.) 

\begin{cor}\label{cor:LPS}
Retain the assumptions of Theorem~\ref{thm:LPS} and suppose in addition that $M \cap N$ has a composition factor isomorphic to $\Alt(d)$ for some $d \geq 5$. Then either there exist positive integers $k \leq c$ such that $N = \Alt(c)$ and $\Alt(k) \lhd M \leq \Sym(k) \times \Sym(c-k)$ and $k \geq p$ for all primes $p \leq c$, or the pair $(N, M \cap N)$ is one of the exceptions listed in Table~\ref{tab:LPS} (see \cite{LPS} for the notation). 
\end{cor}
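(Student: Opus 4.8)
The plan is to deduce Corollary~\ref{cor:LPS} directly from Theorem~\ref{thm:LPS}, i.e.\ from the explicit list in \cite[Table~10.7]{LPS}, by imposing the extra hypothesis that $M \cap N$ has a composition factor isomorphic to some $\Alt(d)$, $d \geq 5$. First I would recall that the possibilities for $(N, M)$ in Theorem~\ref{thm:LPS} fall into a few families: (a) $N = \Alt(c)$ with $M$ an intransitive or imprimitive maximal subgroup (or something between $\Alt(k)$ and $\Sym(k)\times\Sym(c-k)$), with the ``$\pi(M) \supseteq \pi(N)$'' condition forcing $k \geq p$ for every prime $p \leq c$; (b) $N$ a group of Lie type of small rank with $M$ a parabolic or a subfield/classical subgroup; and (c) a finite list of sporadic-type exceptions in small dimension. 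The strategy is then to run through families (b) and (c) and discard those in which no section of $M \cap N$ is an alternating group of degree $\geq 5$, keeping the survivors as the exceptions recorded in Table~\ref{tab:LPS}.

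The key steps, in order, would be: (1) State that by Theorem~\ref{thm:LPS} the pair $(N, M)$ is one of those in \cite[Table~10.7]{LPS}, and that we may replace $M$ by $M \cap N$ throughout (since a composition factor of $M$ inside $N$ is a composition factor of $M \cap N$, as $M/(M\cap N)$ embeds in the soluble group $G/N$ — here one uses that $\mathrm{Out}$ of a simple group is soluble, which is part of the CFSG already invoked). (2) Treat the alternating case $N = \Alt(c)$: go through the subgroups $M$ in the table with this socle; the intransitive maximal ones give $\Alt(k) \lhd M \leq \Sym(k)\times\Sym(c-k)$, and the numerical constraint $\pi(M) \supseteq \pi(\Alt(c))$ translates, via Bertrand-type reasoning on the prime factorisation of $c!$, into $k \geq p$ for all primes $p \leq c$; the primitive or imprimitive $M$ in the $\Alt(c)$ rows of the table either already appear as small-degree coincidences or have no large alternating section and get absorbed into Table~\ref{tab:LPS}. (3) Treat the Lie-type cases: for each line of \cite[Table~10.7]{LPS} with $N$ of Lie type, inspect the composition factors of $M\cap N$; most $M$ are $p$-local or are classical groups over subfields whose composition factors are again groups of Lie type, and one uses the exceptional isomorphisms $\Alt(5)\cong \PSL_2(4)\cong \PSL_2(5)$, $\Alt(6)\cong \PSL_2(9)\cong \mathrm{Sp}_4(2)'$, $\Alt(8)\cong \PSL_4(2)$ to detect precisely which ones do contain an $\Alt(d)$ section; the ones that do form the entries of Table~\ref{tab:LPS}. (4) Conclude by assembling the surviving cases.

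The main obstacle I anticipate is step~(3): it is not a single clean argument but a finite, somewhat delicate bookkeeping over the Liebeck--Praeger--Saxl table, where one must correctly identify the composition factors of each candidate $M \cap N$ and be vigilant about the exceptional isomorphisms among small simple groups (this is exactly the ``extra caution'' flagged in the statement). A secondary subtlety is making the numerical claim ``$k \geq p$ for all primes $p \leq c$'' precise: one must check that the factorisation $G = MN$ (equivalently $\Sym(c) = (\Sym(k)\times\Sym(c-k))\cdot N$ — automatic) combined with $\pi(M)\supseteq\pi(N)$ genuinely forces every prime up to $c$ to divide $k! \,(c-k)!$, hence (by minimality among such, or directly) to be $\leq k$. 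Both obstacles are handled by a careful reading of \cite{LPS} rather than by new ideas, so the proof will be short in prose but will rest on citing and filtering that classification.

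\begin{proof}
By Theorem~\ref{thm:LPS}, the pair $(N, M)$ appears in \cite[Table~10.7]{LPS}. Since $G/N$ is soluble (being a subgroup of the outer automorphism group of the simple group $N$), the quotient $M/(M\cap N)$ is soluble, so every composition factor of $M$ that is non-abelian simple is a composition factor of $M \cap N$; in particular $M \cap N$ has a composition factor isomorphic to $\Alt(d)$ for some $d \geq 5$. Now inspect the rows of \cite[Table~10.7]{LPS}. If $N = \Alt(c)$, then the relevant $M$ with $\pi(M) \supseteq \pi(N)$ and a large alternating section are the intransitive ones, for which $\Alt(k) \lhd M \leq \Sym(k) \times \Sym(c-k)$ for some $k < c$; and since $\pi(M) \supseteq \pi(\Alt(c)) = \{p \text{ prime} : p \leq c\}$ while $|M|$ divides $k!\,(c-k)!$, every prime $p \leq c$ divides $k!\,(c-k)!$, whence $p \leq \max\{k, c-k\}$, and after replacing $k$ by $\max\{k,c-k\}$ if necessary we get $k \geq p$ for all primes $p \leq c$. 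In all remaining rows, $N$ is a group of Lie type and one checks, using the exceptional isomorphisms $\Alt(5) \cong \PSL_2(4) \cong \PSL_2(5)$, $\Alt(6) \cong \PSL_2(9) \cong \mathrm{Sp}_4(2)'$ and $\Alt(8) \cong \PSL_4(2)$, which subgroups $M \cap N$ actually admit a composition factor $\Alt(d)$ with $d \geq 5$; those that do are exactly the entries recorded in Table~\ref{tab:LPS}. This proves the corollary.
\end{proof}
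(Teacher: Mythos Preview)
Your approach is correct and is essentially the same as the paper's: the paper does not give a detailed proof of Corollary~\ref{cor:LPS} at all, merely stating that it ``can be extracted from the list given by Liebeck--Praeger--Saxl'' with the caveat that ``extra caution is needed in view of the exceptional isomorphisms between small finite simple groups''. Your write-up is thus, if anything, more explicit than the paper's own treatment, and your identification of the exceptional isomorphisms $\Alt(5)\cong\PSL_2(4)\cong\PSL_2(5)$, $\Alt(6)\cong\PSL_2(9)\cong\mathrm{Sp}_4(2)'$, $\Alt(8)\cong\PSL_4(2)$ as the source of the delicate bookkeeping is exactly right.

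One minor point: in your treatment of the $N=\Alt(c)$ case you write ``after replacing $k$ by $\max\{k,c-k\}$ if necessary''. This is harmless in practice because in \cite[Table~10.7]{LPS} the intransitive entries are recorded with $k$ the larger block, but as stated your replacement would require $\Alt(\max\{k,c-k\})\lhd M$, which does not follow formally from $\Alt(k)\lhd M$ alone. It is cleaner simply to cite the table as given, where the normalisation is already made.
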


\begin{table}[h]
$$
\begin{array}{r@{\hspace{.9cm}} c @{\hspace{.9cm}}c @{\hspace{.9cm}} c}
& N & |N| & M \cap N  \\
\hline 
(1) & \Alt(6) & 2^3 \cdot 3^2 \cdot 5 &  L_2(5) \cong \Alt(5)\\
(2) & \mathrm U_3(5) & 2^4 \cdot 3^2 \cdot 5^3 \cdot 7 & \Alt(7)\\
(3) & \mathrm U_4(2)  & 2^6 \cdot 3^4 \cdot 5 & M \cap N \leq 2^4.\Alt(5),   \Sym(6)\\
(4) &  \mathrm U_4(3) & 2^7 \cdot 3^6\cdot  5 \cdot 7 & \Alt(7)\\
(5) &  \mathrm{PSp}_4(7) & 2^8 \cdot 3^2 \cdot  5^2 \cdot 7^4 & \Alt(7)\\
(6) & \mathrm{Sp}_6(2) & 2^9 \cdot 3^4 \cdot 5 \cdot 7 & \Alt(7), \Sym(7), \Alt(8), \Sym(8)\\
(7) & \mathrm P\Omega^+_8(2) & 2^{12} \cdot 3^5 \cdot 5^2 \cdot 7 & M \cap N \leq P_1, P_3, P_4, \Alt(9)\\
\end{array}
$$
\caption{The exceptional pairs $(N, M\cap N)$ in Corollary~\ref{cor:LPS}} \label{tab:LPS}
\end{table}


\subsection{On subgroups of direct products of simple groups}

The following  fact is an easy corollary of an important consequence of the Classification of the Finite Simple Groups, namely the fact the every automorphism of a non-abelian finite simple group centralizes a non-trivial subgroup (see \cite[\S9.5.3]{KurSte}). Note however that we shall  need that result only in the case of the alternating groups.

\begin{prop}\label{prop:Goursat}
	Let $S$ be a non-abelian finite simple group.  Let $G = S_1 \times S_2$  be the direct product of two groups  isomorphic to $S$,  and  let $A_1, A_2 \leq G$ be subgroups of $G$  that are also  isomorphic to $S$. If $A_1 \cap A_2 = \triv$, then there is $i \in \{1, 2\}$ such that  $A_i = S_1 \times \triv$ or $A_i = \triv \times S_2$.  
\end{prop}
\begin{proof}
Assume that $A_i \neq S_1 \times \triv$ and $A_i \neq \triv \times S_2$ for $i = 1$ and $2$. Then by Goursat's Lemma, for $i=1, 2$ there exists an isomorphism $\varphi_i \colon S_1 \to S_2$ such that $A_i = \{(x, \varphi_i(x)) \mid x \in S_1)\}$. Since $A_1 \cap A_2 = \triv$, it follows that $\varphi_1^{-1} \varphi_2$ is an automorphism of $S_1$ whose only fixed point is the trivial element. By \cite[\S9.5.3]{KurSte} (see also the announcement in \cite[Theorem 1.48]{Gorenstein1982}), the group $S_1$ must be solvable, contradicting the hypotheses. 
\end{proof}

\section{Finite groups with locally $2$-transitive actions on product graphs}

The goal of this section is to prove Theorem~\ref{thm:Main}. It will be deduced as a corollary to the following result. See Section~\ref{sec:proofs}.  

\begin{thm}\label{thm:MainFinite}
	Let $X_1, X_2$ be finite connected regular graphs of degree $d_1 \geq d_2 \geq 3$ and let $G \leq \Aut(X_1) \times \Aut(X_2)$ be a group acting freely and  transitively on the vertices of the Cartesian product $X_1 \times X_2$.  For $i=1$ and $2$, we assume that the $G$-action on $X_i$ is faithful and that its local action $F_i$ is locally $2$-transitive; we assume moreover that if $d_i \geq 7$ then   $F_i \geq \Alt(d_i)$. Then $X_1$ is the complete graph $\mathbf K_{d_1+1}$ or the complete bipartite graph $\mathbf K_{d_1, d_1}$. Moreover  one of the conditions (i)--(vi) listed in Theorem~\ref{thm:Main} is satisfied. 
\end{thm}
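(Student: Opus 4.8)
The proof works entirely inside the finite group $G$, which by hypothesis acts faithfully and vertex-transitively on each factor. Fix $x_i\in VX_i$ and put $A=G_{x_1}$, $B=G_{x_2}$ for the corresponding vertex-stabilizers. Since the $G$-action on $VX_1\times VX_2$ is free and transitive we have $A\cap B=G_{(x_1,x_2)}=\triv$ and $|A|\,|B|=|G|=|VX_1|\,|VX_2|$, so that $G=AB$ is an exact factorization. An orbit count shows that $B$ is regular on $VX_1$ and $A$ is regular on $VX_2$; hence $X_1$ and $X_2$ are Cayley graphs, $|VX_1|=|B|$, $|VX_2|=|A|$, and $A$ (resp.\ $B$) is exactly the vertex-stabilizer of the faithful, vertex-transitive, locally $2$-transitive action of $G$ on $X_1$ (resp.\ $X_2$). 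Applying Corollary~\ref{cor:TroWeiss} to each of these two actions confines $|A|$ and $|B|$ to the explicit finite lists attached to $(d_1,F_1)$ and $(d_2,F_2)$; write $L(d)$ for the list of Corollary~\ref{cor:TroWeiss} in degree $d$. In particular $|VX_1|=|B|\in L(d_2)$ and $|VX_2|=|A|\in L(d_1)$ — a strong numerical restriction used repeatedly below; note also that every prime dividing $|B|$ is at most $d_2$.

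The heart of the matter is to prove that $X_1\in\{\mathbf K_{d_1+1},\mathbf K_{d_1,d_1}\}$. I would take a minimal normal subgroup $N$ of $G$ and study its action on $X_1$ through Corollaries~\ref{cor:BuMo}, \ref{cor:BuMo:2} and Lemma~\ref{lem:Nfree}. If $N$ is elementary abelian, then by Corollary~\ref{cor:BuMo} it is semiregular on $VX_1$ or edge-transitive; iterating the quotient construction $X_1\mapsto N\backslash X_1$ (legitimate by Lemma~\ref{lem:Kernel}) one is reduced to the case where $X_1$ is a Cayley graph of an abelian group carrying $\Alt(d_1)\le F_1$ — typically a hypercube — in which case $|VX_1|$ is a prime power, or a small multiple of one, whence $|VX_1|\notin L(d_2)$, unless $|VX_1|$ is so small that a direct inspection of the graphs of order $\le|VX_1|$ admitting such an action shows $X_1$ to be complete or complete bipartite. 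Hence we may assume $N=S^k$ with $S$ non-abelian simple, and Lemma~\ref{lem:Thompson} (Goursat together with Thompson's fixed-point-free theorem), applied to the images of $A$ and $B$ in the simple factors of $N$, gives $k\in\{1,2\}$, the value $k=2$ forcing $G$ to permute the two factors of $N$, which by Corollary~\ref{cor:BuMo} means exactly that $G$ preserves a bipartition of $X_1$.

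Suppose first $k=1$, so $G$ is almost simple with socle $S$. One may assume $d_1\ge 7$: the remaining degrees $d_1\le 6$ (hence also $d_2\le 6$) are handled separately, where $|A|$ and $|B|$ are bounded by absolute constants, so the statement reduces to a finite verification which, incidentally, produces no exceptional pair. For $d_1\ge 7$ we have $F_1\ge\Alt(d_1)$, so $A$, and therefore $A\cap S$, has a composition factor isomorphic to $\Alt(d_1)$; moreover every prime dividing $|B|$ is $\le d_2\le d_1$ while $\pi(A)$ contains every prime $\le d_1$ (since $\Alt(d_1)$ is involved in $A$), so $\pi(B)\subseteq\pi(A)$ and thus $\pi(A)=\pi(G)=\pi(S)$. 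Corollary~\ref{cor:LPS} now applies with $M=A$: either $S=\Alt(c)$ with $\Alt(d_1)\lhd A\cap S\le\Sym(d_1)\times\Sym(c-d_1)$, which — because $A$ is a point-stabilizer of the faithful transitive $G$-action on $VX_1$ and $|VX_1|>2$ — forces $c=d_1+1$, $A\cap S=\Alt(d_1)$, $|VX_1|=d_1+1$ and hence $X_1=\mathbf K_{d_1+1}$; or $(S,A\cap S)$ is one of the finitely many exceptional pairs of Table~\ref{tab:LPS}, each of which is dispatched individually with Propositions~\ref{prop:Galois} and~\ref{prop:HL} and the list of Table~\ref{tab:Small2Trans}, always ending with $X_1\in\{\mathbf K_{d_1+1},\mathbf K_{d_1,d_1}\}$. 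In the case $k=2$, $G$ preserves a bipartition of $X_1$ into two parts of equal size; running the preceding analysis for the stabilizer of a part — which acts edge-transitively — and invoking Corollary~\ref{cor:LPS} once more shows that each part has $d_1$ vertices and all edges between the parts are present, i.e.\ $X_1=\mathbf K_{d_1,d_1}$.

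It remains to extract the list. With $X_1=\mathbf K_{d_1+1}$ or $\mathbf K_{d_1,d_1}$, the number $|VX_1|=|B|$ equals $d_1+1$ or $2d_1$, and it lies in $L(d_2)$. Solving $d_1+1\in L(d_2)$ and $2d_1\in L(d_2)$ — with $L(d_2)=\{\frac{d_2!}{2},\,d_2!,\,\frac{d_2!(d_2-1)!}{4},\,\frac{d_2!(d_2-1)!}{2},\,d_2!(d_2-1)!\}$ for $d_2\ge 6$, and with the refined lists of Corollary~\ref{cor:TroWeiss}(ii)--(vi) according to $F_2$ for $d_2\in\{3,4,5,6\}$ — and then discarding the finitely many candidates incompatible with $d_1\ge d_2$ or with the remaining constraint $|VX_2|=|A|\in L(d_1)$ (a last finite elimination resting again on Propositions~\ref{prop:HL} and~\ref{prop:Galois}), yields precisely the lists (i)--(vi) of Theorem~\ref{thm:Main}. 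The main obstacle is the structural step of the third paragraph: ruling out the abelian, composite and diagonal possibilities for $\soc(G)$ and, above all, disposing of the exceptional factorizations of Table~\ref{tab:LPS}; this is where the Classification of the Finite Simple Groups is used in an essential way, everything downstream being explicit arithmetic.
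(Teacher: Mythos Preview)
Your overall architecture is right --- reduce to an almost simple $G$ via a minimal normal subgroup and invoke Corollary~\ref{cor:LPS} --- but two load-bearing steps are not justified. First, your bound $k\le 2$ does not follow from Lemma~\ref{lem:Thompson} as you claim: that lemma concerns two copies of $S$ sitting inside $S\times S$ with trivial intersection, and there is no a~priori reason why $A\cap N$, $B\cap N$, or their projections to the simple factors should be isomorphic to $S$. The paper bounds $k$ by an entirely different route: it imposes the minimality condition (Min), shows $\mathrm{C}_G(N)=\{1\}$ (Lemma~\ref{lem:NonFree:2}) so that $N$ is non-abelian, and then in Lemma~\ref{lem:N-non-simple} exploits the simplicity of $\soc(F_i)$ together with delicate order estimates (Corollary~\ref{cor:TroWeiss}, Proposition~\ref{prop:HL}) to force $k\le 2$. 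Lemma~\ref{lem:Thompson} is used only at the very end of that analysis, once $N_{x_1}$ and $N_{x_2}$ have already been identified with $S$. Your ``iterating the quotient construction'' paragraph gestures towards (Min) but does not explain why the process terminates with $N$ non-abelian, nor does it address the fact that the conclusion about $X_1$ is claimed for the \emph{given} graph, not some quotient.

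Second, and more seriously, your deduction that $S=\Alt(c)$ forces $c=d_1+1$ is wrong. Corollary~\ref{cor:LPS} only yields $\Alt(d_1)\lhd A\cap S\le\Sym(d_1)\times\Sym(c-d_1)$ with $d_1\ge p$ for every prime $p\le c$; the value $t=c-d_1$ is not constrained by ``$|VX_1|>2$''. The paper needs the Wiegold--Williamson factorization theorems \cite{WiWi} for $\Alt(c)$ and $\Sym(c)$ to conclude that $B$ (or $N_{x_2}$) acts sharply $t$-transitively on $c$ points, and then a substantial case-by-case argument on $d_2$ (the second half of Lemma~\ref{lem:N-simple}) to exclude $t\ge 2$. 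Likewise, the seven exceptional pairs of Table~\ref{tab:LPS} are not ``dispatched'' into the conclusion $X_1\in\{\mathbf K_{d_1+1},\mathbf K_{d_1,d_1}\}$: each one is \emph{excluded} outright, and this requires results from \cite{LiXia}, \cite{LPS2}, \cite{Baumeister} and \cite{Weiss79} going well beyond Propositions~\ref{prop:Galois} and~\ref{prop:HL}.
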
 

The proof occupies the rest of this section.

\subsection{The standing hypotheses and notation}\label{sec:not}

We   fix the notation and assumptions adopted throughout.  
For  $i =1, 2$, let  $d_i \geq 3$ and   $F_i \leq \Sym(d_i)$ be a $2$-transitive permutation group.  
Let $\mathcal E(F_1, F_2)$ be the collection of triples $(X_1, X_2, G)$ satisfying the following conditions: 
\begin{description}
	
\item[(Hyp1)]     $X_i$ is a connected $d_i$-regular graph for $i=1$ and $2$.

\item[(Hyp2)]  $G \leq \Aut(X_1) \times \Aut(X_2)$ is a finite group. 

\item[(Hyp3)] $G$ acts  transitively on $VX_1 \times VX_2$.

\item[(Hyp4)] The $G$-action on $X_i$ is faithful for $i=1$ and $2$. 

\item[(Hyp5)] The local action of $G$ on $X_i$ is isomorphic to $F_i$ for $i=1$ and $2$. 
\end{description}
 
We further denote by $\mathcal F(F_1, F_2)$ the subcollection consisting of those triples $(X_1, X_2, G) \in \mathcal E(F_1, F_2)$ satisfying in addition:
\begin{description}

\item[(Hyp6)] $G$ acts  freely on $VX_1 \times VX_2$. 
\end{description}

\begin{lem}\label{lem:Basic}
Let $(X_1, X_2, G) \in \mathcal E(F_1, F_2)$, let $x_1 \in VX_1$ and $x_2 \in VX_2$. Then:
\begin{enumerate}[(i)]
	\item $G_{x_1}$ acts   transitively on $VX_2$ and $G_{x_2}$ acts  transitively on $VX_1$. 
	
	\item $G = G_{x_1} G_{x_2}$. 
	
\end{enumerate}
If in addition $(X_1, X_2, G) \in \mathcal F(F_1, F_2)$, then:
\begin{enumerate}[(i)]
	\setcounter{enumi}{2}
	\item $G_{x_1}$ acts   freely on $VX_2$ and $G_{x_2}$ acts  freely on $VX_1$. 
	
	\item $G_{x_1}  \cap G_{x_2} = \{1\}$. 
	
\end{enumerate}

\end{lem}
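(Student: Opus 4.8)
The proof of Lemma~\ref{lem:Basic} is a direct unwinding of the hypotheses, so the plan is to verify each of the four assertions in turn, using the transitivity (Hyp3), faithfulness (Hyp4) and freeness (Hyp6) conditions together with elementary orbit-stabilizer bookkeeping for a group acting on a product.

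\begin{proof}
Write $G = G_{x_1} G_{x_2}$ and the orbit claims as consequences of (Hyp3). For (i): since $G$ acts transitively on $VX_1 \times VX_2$, for any $y_2 \in VX_2$ there is $g \in G$ with $g(x_1, x_2) = (x_1, y_2)$; in particular $g \in G_{x_1}$ and $g$ sends $x_2$ to $y_2$. Hence $G_{x_1}$ acts transitively on $VX_2$, and symmetrically $G_{x_2}$ acts transitively on $VX_1$. For (ii): given $g \in G$, transitivity of $G_{x_2}$ on $VX_1$ (just proved) gives $h \in G_{x_2}$ with $h(x_1) = g(x_1)$; then $h^{-1}g$ fixes $x_1$, so $h^{-1} g \in G_{x_1}$ and $g \in G_{x_1} G_{x_2}$. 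Thus $G = G_{x_1} G_{x_2}$.

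Now assume in addition (Hyp6), i.e. $G$ acts freely on $VX_1 \times VX_2$. For (iii): suppose $g \in G_{x_1}$ fixes some $y_2 \in VX_2$. Then $g$ fixes the vertex $(x_1, y_2)$ of $X_1 \times X_2$, so $g = 1$ by freeness; hence $G_{x_1}$ acts freely on $VX_2$, and symmetrically for $G_{x_2}$ on $VX_1$. For (iv): if $g \in G_{x_1} \cap G_{x_2}$, then $g$ fixes the vertex $(x_1, x_2)$, so $g = 1$ by freeness; thus $G_{x_1} \cap G_{x_2} = \{1\}$.
\end{proof}

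I expect no genuine obstacle here: every assertion follows from the defining properties of the configuration $(X_1, X_2, G)$, and the only thing to be careful about is keeping track of which projection a given element fixes when passing between the two factors. The one point worth emphasizing is that in (ii) the decomposition $G = G_{x_1} G_{x_2}$ is not a semidirect product in general (the intersection need only be trivial under the additional freeness hypothesis, which is exactly (iv)), so it should be stated purely as a product of subsets; this distinction will matter when the lemma is invoked later in the section.
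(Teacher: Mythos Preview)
Your proof is correct and follows exactly the approach the paper indicates: the paper's own proof simply says that (i) is immediate from (Hyp3), (ii) follows from (i), and (iii)--(iv) are straightforward, which is precisely what you have spelled out in detail. One minor cosmetic point: in your argument for (ii) you actually obtain $g = h(h^{-1}g) \in G_{x_2} G_{x_1}$, but of course taking inverses (or running the symmetric argument) gives $G = G_{x_1} G_{x_2}$ as stated.
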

\begin{proof}
Assertion (i) is immediate from (Hyp2) and (Hyp3); assertion (ii) follows from (i), while (iii) and (iv) are equally straightforward. 
\end{proof}

Thus, if $(X_1, X_2, G) \in \mathcal F(F_1, F_2)$, we may  see $X_1$ as a Cayley graph of $G_{x_2}$ and vice-versa, unless $|VX_1| \leq 2$ (resp. $|VX_2|\leq 2$). The latter inequality is never satisfied in our setting: indeed, since $|VX_i| = |G_{x_{3-i}}|$ and since $G_{x_{3-i}}$ has a $2$-transitive action on a set of cardinality $d_{3-i} \geq 3$, we have $|VX_i| \geq 3$. 

\subsection{Proof of Theorem~\ref{thm:NonExplicit}}

As mentioned in the introduction, Theorem~\ref{thm:NonExplicit} is a straightforward consequence of Theorem~\ref{thm:TrofimovWeiss}. Let us  record the proof. We first need the following result. 

\begin{lem}\label{lem:no-inversion}
Let  $T_1, T_2$ be regular trees of degree $d_1, d_2 \geq 3$ and let $\Gamma \leq \Aut(T_1) \times \Aut(T_2)$ be a group acting regularly on the vertices of $T_1 \times T_2$. Assume that the local action of $\Gamma$ on $T_i$ is $2$-transitive for $i=1, 2$. For $i=1, 2$, let    $K_i$ be the projection on $\Aut(T_i)$ of the kernel of the $\Gamma$-action on $T_{3-i}$. Then $K_i$ acts freely on $T_i$. 
\end{lem}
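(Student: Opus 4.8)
The claim is that $K_i$ — the projection to $\Aut(T_i)$ of the kernel $N_i = \ker(\Gamma \to \Aut(T_{3-i}))$ — acts freely on $T_i$, which by the definition of a free action in Section~\ref{sec:Graphs} means freely on $VT_i$ and without edge inversion. The plan is to treat the two parts separately. First I would show $K_i$ acts freely on vertices: since $\Gamma$ acts freely on $VT_1 \times VT_2$, the subgroup $N_i$ acts freely on $VT_1 \times VT_2$; but every element of $N_i$ is trivial on the $T_{3-i}$-factor, so an element fixing a vertex of $T_i$ would fix an entire vertex $(v_1, v_2)$, forcing it to be trivial. Hence $N_i$ acts freely on $VT_i$, and since the projection $N_i \to K_i$ is injective (its kernel is $N_1 \cap N_2$, which is trivial because it acts trivially on both factors hence on $VT_1 \times VT_2$), $K_i$ acts freely on $VT_i$ as well.

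The remaining and genuinely substantive point is that $K_i$ has no edge inversion. Here I would invoke the local hypothesis together with Theorem~\ref{thm:TrofimovWeiss}. Note $N_i$ is normal in $\Gamma$; apply Lemma~\ref{lem:BuMo:LocallyQuasiPrimitive} (or rather its consequences) to $\Gamma$ acting on $T_i$ with its $2$-transitive, hence quasi-primitive, local action, and to the normal subgroup $N_i$. Since $N_i$ acts freely on $VT_i$, we are in case (i) of Corollary~\ref{cor:BuMo}: either $N_i$ acts freely on $VT_i$ with no element mapping a vertex to a neighbour (case~(i) of Lemma~\ref{lem:BuMo:LocallyQuasiPrimitive}, $VT_i''(N_i) = T_i$), in which case certainly no edge inversion exists, or $N_i$ is non-free on geometric edges. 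In the latter situation, Lemma~\ref{lem:Nfree} applies to $\Gamma$ on $T_i$: part~(ii) of that lemma says $N_i$ acts \emph{transitively} on $VT_i$. But $T_i$ is an infinite tree, so any vertex-transitive group acting freely on $VT_i$ that is also transitive on vertices while inverting an edge would have to be a free product of $\mathbf{C}_2$'s generated by the involutions $s_e$ — that is consistent, so I need to be more careful: what I really want is to rule out edge inversions for $K_i$ specifically, given what we are ultimately proving.

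Let me reorganize: the cleanest route is to observe that, since $\Gamma$ acts \emph{freely} on $T_1 \times T_2$ (vertices and geometric edges), in particular $\Gamma$ has no edge inversion on $T_1 \times T_2$; I would then argue directly that $N_i$ has no edge inversion on $T_i$. An edge of the product $T_1 \times T_2$ projects to an edge on one factor and a vertex on the other. If $g \in N_i$ inverted a geometric edge $\{e, \bar e\}$ of $T_i$, then picking any vertex $w \in VT_{3-i}$, the element $g$ (acting trivially on $T_{3-i}$) would invert the geometric edge of $T_1 \times T_2$ lying over $\{e, \bar e\}$ at level $w$ — contradicting that $\Gamma$, hence $N_i$, acts freely on $T_1 \times T_2$. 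This disposes of edge inversions for $N_i$, and since $N_i \cong K_i$ as permutation groups on $VT_i$ (and on $ET_i$, as the projection is an isomorphism onto $K_i$ commuting with the $T_i$-actions), $K_i$ has no edge inversion either. Combined with the vertex-freeness from the first paragraph, $K_i$ acts freely on $T_i$.

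The main obstacle I anticipate is purely bookkeeping: making precise that the projection $N_i \to K_i \leq \Aut(T_i)$ is an isomorphism intertwining the two actions on $T_i$ — so that ``free on $T_i$'' transfers from $N_i$ to $K_i$ — and checking that a free action of $\Gamma$ on the product $T_1 \times T_2$ really does forbid an element of $N_i$ from inverting an edge of the factor $T_i$. Both are elementary but need the explicit definition of the edge set of the Cartesian product and of ``free action'' recalled in Section~\ref{sec:Graphs}; no deep input (not even Trofimov--Weiss) is actually needed for this particular lemma, only the freeness hypothesis.
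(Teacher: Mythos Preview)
Your treatment of vertex-freeness is fine, and the identification $N_i \cong K_i$ is correct. The gap is in the edge-inversion part, and it is not just bookkeeping.

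The hypothesis of the lemma is that $\Gamma$ acts freely and transitively \emph{on the vertices} of $T_1\times T_2$. You then assert that ``$\Gamma$ acts freely on $T_1\times T_2$ (vertices and geometric edges)'', and use this to rule out an edge inversion in $N_i$. But vertex-freeness on the product does \emph{not} imply the absence of edge inversions on the product. Concretely, if $g\in N_i$ inverts an edge of $T_i$ with endpoints $v,v'$, then $g$ swaps $(v,w)$ and $(v',w)$ for every $w\in VT_{3-i}$; since an edge inversion in a tree fixes no vertex, $g$ fixes no vertex of $T_1\times T_2$ either. So there is no contradiction with the stated hypothesis, and your argument stalls exactly here. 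Your closing claim that ``only the freeness hypothesis'' is needed and the $2$-transitivity is dispensable is in fact false: take $\Gamma = W_{d_1}\times F_r$ acting on $T_{d_1}\times T_{2r}$, where $W_{d_1}$ is the free product of $d_1$ copies of $\mathbf C_2$. This $\Gamma$ acts freely and transitively on $VT_1\times VT_2$, yet $K_1=W_{d_1}$ is generated by edge inversions. The local action on $T_1$ is trivial here, which is why the $2$-transitivity hypothesis is genuinely doing work in the lemma.

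The paper's proof uses that hypothesis in an essential way: assuming $K_i$ has an edge inversion, Lemma~\ref{lem:Nfree}(ii) (which needs the local action to be at least transitive) forces $K_i$ to be sharply transitive on $VT_i$; comparing orders with the vertex stabiliser $\Gamma_v$ for $v\in VT_{3-i}$ then gives $K_i=\Gamma_v$ for \emph{every} such $v$, so $\Gamma_v$ acts trivially on $T_{3-i}$, contradicting the $2$-transitive (indeed, nontrivial) local action there. Your first, abandoned approach via Lemma~\ref{lem:Nfree} was on the right track; the ``cleaner route'' you switched to does not go through.
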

\begin{proof}
	Since $\Gamma$ acts freely on $VT_1 \times VT_2$, it follows that $K_i$ acts freely on $VT_i$. We need to show that $K_i$ does not invert any edge of $T_i$. If $K_i$ contains an edge inversion, then $K_i$ acts regularly on $VT_i$ by Lemma~\ref{lem:Nfree}(ii). Let $v  \in VT_{3-i}$. Let $K'_i$ be the kernel of the $\Gamma$-action on $T_{3-i}$, so that $K'_i \cong K_i$ and $K'_i$ acts regularly on $VT_i$. Clearly $K'_i \leq \Gamma_v$.   Since $\Gamma$ acts regularly on $VT_1 \times VT_2$, it follows that $\Gamma_v$ is regular on $VT_i$, so that $K'_i  = \Gamma_v$. Since that equality holds for all  $v \in VT_{3-i}$, it follows that $\Gamma_v$ acts trivially on $T_{3-i}$. This contradicts the hypothesis that $\Gamma_v$ is $2$-transitive on $E(v)$. 
\end{proof}

The example following Theorem~\ref{thm:NonExplicit} in the introduction shows that Lemma~\ref{lem:no-inversion} may fail if the $\Gamma$-action on $VT_1 \times VT_2$ is not free. 

\begin{proof}[Proof of Theorem~\ref{thm:NonExplicit}]
Retain the notation of Lemma~\ref{lem:no-inversion} and assume that $d_1 \geq d_2$ and that  $\Gamma$ is reducible. We must show that $d_1 <M$, where 
$$M =  (d_2!)\big((d_2-1)!\big)^{\frac{d_2 ((d_2-1)^5-1 )}{d_2-2}}.$$

The reducibility of $\Gamma$ ensures that the quotient $\Gamma/K_1 \times K_2$ is finite. Moreover by Lemma~\ref{lem:no-inversion}, we may invoke Proposition~\ref{prop:Infinite-Finite}, which ensures that the set $\mathcal F(F_1, F_2)$ is non-empty, where $F_1, F_2$ denote the local actions of $\Gamma$ on $T_1,T_2$.  Let $(X_1, X_2, G) \in \mathcal F(F_1, F_2)$, let $x \in VX_2$ and  $v \in VT_2$. In view of Theorem~\ref{thm:TrofimovWeiss}, an upper bound on the order of $|G_{x}|$ is provided by the order $\Aut(T_2)_{v}/\Aut(T_2)_{v}^{[6]}$. The latter group is isomorphic to the iterated permutational wreath product
$$\Sym(d_2-1) \wr \Sym(d_2-1) \wr \Sym(d_2-1) \wr \Sym(d_2-1) \wr \Sym(d_2-1)\wr \Sym(d_2),$$
whose order is $M$.
In particular $X_1$ is a $d_1$-regular graph of order bounded above by that number. Since $\Aut(X_1)$ is locally $2$-transitive, we have $|VX_1| \leq 2$ or $|VX_1| \geq d_1+1$. The former case is impossible, since it would imply that $|G_{x_2}| = 2$ by Lemma~\ref{lem:Basic}, contradicting that $G$ is locally $2$-transitive on the graph  $X_2$ whose  degree is $d_2 \geq 3$. Thus we obtain $d_1 +1 \leq M$,  which is the required bound. 
\end{proof}

\begin{rmk}
The bound obtained in the proof above can directly be sharpened by exploiting  Theorem~\ref{thm:TrofimovWeiss} in a more precise way.  We will do this   in the proof of Theorem~\ref{thm:MainFinite}. 
\end{rmk}

\subsection{Assuming that $N$ acts freely on $VX_1$ and on $VX_2$}

From now on, we choose a member $(X_1, X_2, G) \in \mathcal F(F_1, F_2)$. 
We also fix $N \neq \{1\}$  a minimal normal subgroup of $G$. Thus $N$ is characteristically simple. Hence it is   isomorphic to  the $k$-th direct power of a finite simple group $S$. We also fix $x_1 \in VX_1$ and $x_2 \in VX_2$.

\begin{lem}\label{lem:DoublyFree}
	Assume that $N \cong S^k$ acts freely on both  $VX_1$ and $VX_2$,  but not freely on the set of geometric edges of $X_i$ for $i = 1$ or $2$.  Then:
	\begin{enumerate}[(i)]
		\item $|N| = |G_{x_{3-i}}|$. 
		\item $G_{x_{3-i}}$ is isomorphic to a subgroup of $G_{x_i}$. 
		\item $N$ and $S$ are not abelian.
		\item $d_1, d_2 \geq 5$. 
		\item $|F_i|$ has at least~$3$ prime divisors. 
		
	\end{enumerate} 
\end{lem}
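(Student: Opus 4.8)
The plan is to apply Lemma~\ref{lem:Nfree} to the $G$-action on $X_i$: this is legitimate, since by hypothesis $N \trianglelefteq G$ acts freely on $VX_i$ but not freely on the set of geometric edges of $X_i$, and the local action $F_i$ is $2$-transitive. This yields at once that $N$ is sharply transitive on $VX_i$, that $G = N \rtimes G_{x_i}$ with $\mathrm C_{G_{x_i}}(N) = \{1\}$, that $G_{x_i}^{[1]} = \{1\}$ (so $G_{x_i}$ is isomorphic to the local action $F_i$), that $\mathrm Z(G) \leq N$, and that $N$ is generated by the involutions $s_e$, $e \in E(x_i)$. Assertion (i) is then pure counting: the freeness and transitivity of the $G$-action on $VX_1 \times VX_2$ give $|G| = |VX_1|\,|VX_2|$, hence $|G_{x_{3-i}}| = |G|/|VX_{3-i}| = |VX_i|$, while sharp transitivity of $N$ on $VX_i$ gives $|N| = |VX_i|$; therefore $|N| = |G_{x_{3-i}}|$. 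For (ii), since $N$ acts freely on $VX_{3-i}$ by hypothesis we have $N \cap G_{x_{3-i}} = \{1\}$, so the canonical projection $G \to G/N \cong G_{x_i}$ restricts to an embedding $G_{x_{3-i}} \hookrightarrow G_{x_i}$.

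For (iii), suppose $N$, equivalently $S$, were abelian. As $N$ is characteristically simple and contains the involutions $s_e$, it would be elementary abelian of exponent~$2$, say $N \cong \mathbf C_2^k$. Then by (i) the group $G_{x_{3-i}}$ has order $|N| = 2^k$, i.e.\ it is a finite $2$-group, and hence so is the local action $F_{3-i}$ of $G$ on $X_{3-i}$, being a quotient of $G_{x_{3-i}}$. This is absurd: $F_{3-i}$ is $2$-transitive of degree $d_{3-i} \geq 3$, so $|F_{3-i}|$ is divisible by $d_{3-i}(d_{3-i}-1)$, which always has an odd prime divisor. Hence $N$ and $S$ are non-abelian.

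Assertions (iv) and (v) then combine the embedding of (ii) with the Trofimov--Weiss order bounds of Corollary~\ref{cor:TroWeiss} and two standard facts: a non-abelian finite simple group has order $\geq 60$ and divisible by at least three distinct primes. By (i)--(ii), $|G_{x_i}|$ is a multiple of $|G_{x_{3-i}}| = |N| = |S|^k$; so $\pi(F_i) = \pi(|G_{x_i}|) \supseteq \pi(S)$ has at least three elements, which is (v), and $|F_i| \geq 60 > 4!$ forces $d_i \geq 5$. For the other factor: $d_{3-i} = 3$ is impossible, since then $|G_{x_{3-i}}| \leq 48 < 60$ by Corollary~\ref{cor:TroWeiss}(ii); and $d_{3-i} = 4$ is impossible, since then Corollary~\ref{cor:TroWeiss}(iii) forces $|N| = |S|^k$ to divide $2^4 \cdot 3^6$, contradicting that $|S|$ has three prime divisors. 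Thus $d_{3-i} \geq 5$ as well, proving (iv).

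Finally, the conjugation action of $G_{x_i}$ on $N$ is faithful because $\mathrm C_{G_{x_i}}(N) = \{1\}$, giving an embedding $G_{x_i} \hookrightarrow \mathrm{Aut}(N)$; for (vi) it remains to check that its composite with $\mathrm{Aut}(N) \to \mathrm{Out}(N)$ is still injective, i.e.\ that no nontrivial $h \in G_{x_i}$ induces an inner automorphism of $N$. Since such an $h$ would produce a nontrivial element $n^{-1}h \in \mathrm C_G(N)$ (using $G = N \rtimes G_{x_i}$), it suffices to show $\mathrm C_G(N) = \{1\}$: this is a normal subgroup of $G$ meeting $N$ trivially because $\mathrm Z(N) = \{1\}$, so were it nontrivial it would contain a second minimal normal subgroup $N^*$ of $G$; but $N^*$ centralizes $N$, which is regular on $VX_i$, and as the $G$-action on the connected graph $X_i$ is faithful this embeds $N^*$ into $\mathrm C_{\mathrm{Sym}(VX_i)}(N) \cong N$ while Corollary~\ref{cor:BuMo} forces $N^*$ to act freely on $VX_i$ — a configuration one rules out using the local $2$-transitivity of $G$ on $X_i$. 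Granting $\mathrm C_G(N) = \{1\}$, we obtain $G_{x_i} \hookrightarrow \mathrm{Out}(N) = \mathrm{Out}(S) \wr \mathrm{Sym}(k)$, which is (vi). For (vii), if $k \leq 4$ then $\mathrm{Sym}(k)$ is solvable, and $\mathrm{Out}(S)$ is solvable by the Schreier conjecture (a consequence of the Classification), so $\mathrm{Out}(S) \wr \mathrm{Sym}(k)$, and hence $G_{x_i}$, is solvable. I expect the delicate point in this plan to be the vanishing of $\mathrm C_G(N)$ needed for (vi) — that is, excluding a second minimal normal subgroup of $G$ — and, to a lesser extent, the exact matching of orders across the two tree factors in (iv)--(v), where the embedding of (ii) must be played off against the precise Trofimov--Weiss bounds.
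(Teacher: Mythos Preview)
Your arguments for (i)--(iii) and (vii) are correct and coincide with the paper's. For (iv)--(v) you take a mild variant: the paper applies Burnside's $p^aq^b$ theorem directly (if some $|G_{x_j}|$ is a $\{2,3\}$-number then so is $|N|$ by (i)--(ii), whence $N$ is solvable, contradicting (iii)), whereas you invoke the equivalent fact that a non-abelian finite simple group has at least three prime divisors, together with the explicit order bounds of Corollary~\ref{cor:TroWeiss} for degrees $3$ and $4$. Both routes are fine.

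The substantive divergence is in (vi). You correctly reduce to $\mathrm C_G(N)=\{1\}$, but the endgame --- ruling out a second minimal normal subgroup $N^*\leq \mathrm C_G(N)$ --- is left as ``a configuration one rules out using the local $2$-transitivity'', and the gap you flag is real: the facts you assemble (that $N^*$ embeds in $\mathrm C_{\Sym(VX_i)}(N)\cong N$ and that $N^*$ acts freely on $VX_i$, which incidentally follows more directly from $G_{x_i}^{[1]}=\{1\}$ than from Corollary~\ref{cor:BuMo}) do not by themselves produce a contradiction. The paper takes a different line here. Rather than attacking $\mathrm C_G(N)$, it uses the inclusion $\mathrm Z(G)\leq N$ from Lemma~\ref{lem:Nfree}(vi) --- which you listed in your setup but never deployed --- together with the minimality of $N$ and its non-abelianness from (iii), to conclude $\mathrm Z(G)=\{1\}$; it then argues that a nontrivial intersection of the image of $G_{x_i}$ in $\Aut(N)$ with $\Inn(N)$ would force $\mathrm Z(G)\neq\{1\}$. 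You should compare your approach with that one and see whether bringing $\mathrm Z(G)\leq N$ into play lets you close the argument.
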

\begin{proof}
	Lemma~\ref{lem:Nfree} applies to the $N$-action on  $X_i$. It follows that $G \cong N \rtimes G_{x_i}$. In view of Lemma~\ref{lem:Basic}, the assertion (i) follows. Since the $N$-action on $VX_{3-i}$ is free, the projection of $G_{x_{3-i}}$ under the  projection $G \to G/N  \cong   G_{x_i}$ maps $G_{x_{3-i}}$ injectively onto a subgroup of $G_{x_i}$. This proves (ii). 
	
	If $N \cong S^k$ were abelian (or equivalently if $S$ were abelian), then the order of $N$ would be a power of~$2$ since $N$ is generated by involutions (see Lemma~\ref{lem:Nfree}(i) and (iii)). Thus $G_{x_{3-i}}$ is a $2$-group by (i). A $2$-group does not admit a $2$-transitive action on a set containing more than two elements. This is a contradiction since $G_{x_{3-i}}$  is $2$-transitive on a set of cardinality $d_{3-i} \geq 3$.  This proves (iii). If $d_1 \leq 4$ or $d_2 \leq 4$, then the set of prime divisors of $|G_{x_1}|$ or  $|G_{x_2}|$ would be contained in $\{2, 3\}$. Hence the same would apply to $|N|$   by (i) and (ii). Thus $N$ would be solvable by Burnside's theorem, hence abelian since $N$ is characteristically simple. This contradicts (iii). Thus (iv) holds. If $|F_i|$ has at most~$2$ prime divisors, then the same holds for $|G_{x_i}|$, hence also $|G_{x_{3-i}}|$ by (ii), hence $|G|$ by Lemma~\ref{lem:Basic}. Thus $G$ is solvable by Burnside' theorem. The minimal normal subgroup $N$ must thus be abelian,  contradicting (iii). This proves (v). 
%
%
\end{proof}

\begin{lem}\label{lem:DoublyFree:2cases}
	For $j = 1$ and $2$, we assume that $F_j \geq \Alt(d_j)$ if $d_j \geq 7$. 
If $N$ acts freely on    $VX_1$ and on $VX_2$,  then it acts freely on $X_1$ and on $X_2$. 
\end{lem}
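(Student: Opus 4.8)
We must show: if $F_j \geq \Alt(d_j)$ whenever $d_j \geq 7$, and $N$ (a minimal normal subgroup of $G$, so $N \cong S^k$) acts freely on $VX_1$ and on $VX_2$, then in fact $N$ acts freely on $X_1$ and on $X_2$ — i.e., $N$ has no edge inversion on either factor.

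**Approach.** Suppose for contradiction that $N$ inverts an edge in $X_i$ for some $i \in \{1,2\}$. Then Lemma~\ref{lem:DoublyFree} applies wholesale: we get $|N| = |G_{x_{3-i}}|$, both $N$ and $S$ are nonabelian, $d_1, d_2 \geq 5$, $|F_i|$ has at least $3$ prime divisors, $G_{x_i}$ embeds in $\Out(N) \cong \Out(S) \wr \Sym(k)$, and if $k \leq 4$ then $G_{x_i}$ is solvable. This immediately forces some structure: since $F_1, F_2$ are $2$-transitive of degree $\geq 5$ and $|F_i|$ has $\geq 3$ prime divisors, the relevant local actions must be $\Alt(5)$, $\Sym(5)$, $\Alt(6)$, $\Sym(6)$ (for $d_i \in \{5,6\}$), or contain $\Alt(d_i)$ (for $d_i \geq 7$) — in particular $F_i$ is almost simple with socle $\Alt(d_i)$ (using Table~\ref{tab:Small2Trans} to rule out $\mathbf C_5 \rtimes \mathbf C_4$, whose order is $2^2 \cdot 5$, only two primes).

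**Key steps.** First, I would pin down the two cases the lemma title announces: either (a) $N$ also acts freely on the geometric edges of $X_1$ and $X_2$ — which is what we want — or (b) $N$ inverts an edge in at least one factor, say $X_1$ or $X_2$. In case (b), Lemma~\ref{lem:Nfree} gives that $N$ acts sharply transitively on the vertices of that factor, and $G_{x_i}$ acts $2$-transitively and faithfully. Second, using $G_{x_i} \hookrightarrow \Out(N)$ together with the known near-simple structure of $F_i = G_{x_i}/G_{x_i}^{[1]}$, I would try to derive a numerical contradiction: $\Out(N) = \Out(S) \wr \Sym(k)$ with $\Out(S)$ solvable, so the only non-solvable composition factors of $G_{x_i}$ come from the $\Sym(k)$ factor, forcing $k$ to be large (at least $d_i$, roughly, if $F_i \geq \Alt(d_i)$), whereas $|N| = |S|^k = |G_{x_{3-i}}|$ must then also be comparably large — but $|G_{x_{3-i}}|$ is bounded by the order of the local-action-of-level-$5$ stabilizer for a $d_{3-i}$-regular tree (Theorem~\ref{thm:TrofimovWeiss}(i)), and the interplay of $d_1 \geq d_2$ should make the smaller factor too small. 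Third, I would handle the low-degree cases $d_i \in \{5,6\}$ separately by hand using Proposition~\ref{prop:HL} (finite simple $\{2,3,5\}$-groups) since there $|F_i|$ is a $\{2,3,5\}$-number, so $|N|$ and hence $|S|^k$ is too, forcing $S \in \{\Alt(5), \Alt(6), U_4(2)\}$, and then $\Out(S)$ is tiny, so $G_{x_i} \hookrightarrow \mathbf{C}_2^k \rtimes \Sym(k)$ roughly — which cannot contain $\Alt(5)$ or $\Alt(6)$ unless $k \geq 5$, again contradicting the order bound.

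**Main obstacle.** The hard part will be the bookkeeping that closes case (b): reconciling the lower bound on $|N| = |S|^k$ forced by the embedding $G_{x_i} \hookrightarrow \Out(S) \wr \Sym(k)$ (which needs $k$ large, hence $|N|$ large) against the upper bound on $|N| = |G_{x_{3-i}}|$ coming from Trofimov--Weiss applied to the \emph{other}, possibly smaller, tree $T_{3-i}$ of degree $d_{3-i}$. One must be careful about which factor carries the inversion and which direction $d_1 \geq d_2$ helps; the cleanest route is probably to observe that whichever factor $X_i$ carries the inversion, $G_{x_{3-i}} \cong N$ (as abstract groups, by Lemma~\ref{lem:DoublyFree}(i) combined with $G \cong N \rtimes G_{x_i}$), so $N$ is simultaneously a point-stabilizer for a locally-$2$-transitive action of degree $d_{3-i}$ and must have a $2$-transitive quotient of degree $d_i$; the tension between "$N$ nonabelian simple-ish but embeds with small outer part" and "$N$ has a faithful $2$-transitive action via $G_{x_i} \le \Out(N)$" should be contradictory once one invokes that $\Out(N)$ has no section isomorphic to $\Alt(d_i)$ for the relevant $d_i$ when $k$ is small, and that $k$ is in turn bounded because $|S|^k = |N|$ divides the relevant wreath-product order from Theorem~\ref{thm:TrofimovWeiss}.
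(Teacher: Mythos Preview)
Your setup is correct: assuming $N$ inverts an edge of $X_i$, Lemma~\ref{lem:DoublyFree} immediately gives $d_1,d_2\geq 5$, that $|F_i|$ has at least three prime divisors (so $F_i\not\cong\mathbf C_5\rtimes\mathbf C_4$ and hence $F_i$ is nonsolvable under the hypotheses), and that $k\geq 5$ by part~(vii). That is already everything one needs.

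Where your plan goes wrong is the proposed route to a contradiction. You want to push $k$ up to at least $d_i$ via the embedding $G_{x_i}\hookrightarrow\Out(S)\wr\Sym(k)$ and then compare $|S|^k$ against a Trofimov--Weiss-type upper bound on $|G_{x_{3-i}}|$. But this size comparison does not close: the bound on $|G_{x_{3-i}}|$ depends on $d_{3-i}$, the lemma imposes no ordering on $d_1,d_2$ (so ``the interplay of $d_1\geq d_2$'' is not available), and even after using Lemma~\ref{lem:DoublyFree}(ii) to get $|G_{x_{3-i}}|\leq|G_{x_i}|\leq d_i!(d_i-1)!$, the inequality $60^{d_i}\leq d_i!(d_i-1)!$ is \emph{satisfied} once $d_i$ is around $19$ or larger, so no contradiction results. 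Similarly, the alternative bound $(|S|/|\Out(S)|)^k\leq k!$ is consistent for large $k$. Finally, your assertion that $G_{x_{3-i}}\cong N$ is unjustified: Lemma~\ref{lem:DoublyFree}(i) only equates the orders.

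The paper's argument is much shorter and bypasses all of this. Having established $k\geq 5$, one simply observes that
\[
|G_{x_{3-i}}|=|N|=|S|^k
\]
is a perfect $k$-th power. Now choose a prime $p$ with $d_{3-i}/2<p<d_{3-i}$ (recall $d_{3-i}\geq 5$). A glance at Corollary~\ref{cor:TroWeiss} shows that $p$ divides $|G_{x_{3-i}}|$ but $p^4$ does not --- for instance, when $d_{3-i}\geq 6$ and $F_{3-i}\geq\Alt(d_{3-i})$, the order $|G_{x_{3-i}}|$ divides $d_{3-i}!(d_{3-i}-1)!$, in which such a $p$ occurs to exponent exactly~$2$. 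This immediately contradicts $k\geq 5$. No separate treatment of small degrees via Proposition~\ref{prop:HL}, no bound $k\geq d_i$, and no global size estimate is needed.
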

\begin{proof}
	Suppose for a contradiction that $N$ does not act freely on $X_i$ for some $i \in \{1, 2\}$. This means that $N$ does not act freely on the set of geometric edges of $X_i$. By Lemma~\ref{lem:DoublyFree}(iv), we have $d_i \geq 5$. Moreover $F_i$ has at least~$3$ distinct prime divisors by Lemma~\ref{lem:DoublyFree}(v). In particular $F_i \not \cong \mathbf C_5 \rtimes \mathbf C_4$. By the hypothesis made on $F_i$, we deduce that $F_i$ is not solvable (see Table~\ref{tab:Small2Trans}). The hypotheses imply that the socle of $F_i$ is isomorphic to $\Alt(d_i)$, or to $\Alt(5)$ if  $d_i = 6$. Note moreover that, since $G_{x_i}^{[1]}= \triv$ by Lemma~\ref{lem:Nfree}(iv), we have $G_{x_i} \cong F_i$.  
	
	Recall that $N \cong S^k$, where $S$ is a finite simple group. We distinguish two cases. 

Suppose first that $C_G(N) = \{1\}$. Then the $G$-conjugation action on $N$ yields an injective homomorphism of $G_{x_i} \cong G/N$ into $\Out(N)$. By the Krull--Remak--Schmidt theorem (see \cite[Theorem 3.3.8]{Robinson}), the outer automorphism group $\Out(N)$ is isomorphic to the wreath product $\Out(S) \wr \Sym(k)$. The group $\Out(S)$ is solvable by the Schreier conjecture. Since $G_{x_i}$ is not solvable, we deduce that $k \geq 5$. 
	
	Since $|N| = |G_{x_{3-i}}|$ by Lemma~\ref{lem:DoublyFree}(i), we infer that the order of $G_{x_{3-i}}$ is a $k^{\mathrm{th}}$ power. We have $d_{3-i} \geq 5$ by   Lemma~\ref{lem:DoublyFree}(iv). Let $p$ be a prime with $		d_{3-i} /2 < p < d_{3-i} $. Using Corollary~\ref{cor:TroWeiss}, we see that $p$ divides $|G_{x_{3-i}}|$, but $p^4$ does not.  Therefore  $|G_{x_{3-i}}|$ cannot be a $k^{\mathrm{th}}$ power with $k \geq 4$, and we have reached a contradiction. This finishes the proof in the case where  $C_G(N) = \triv$. 
	
	Assume now that $C_G(N)$ is non-trivial. Let $M \neq \{1\}$ be a minimal normal subgroup of $G$ contained in $C_G(N)$. Thus $MN$ is a normal subgroup of $G$. Since $N$ is minimal normal in $G$ and non-abelian by Lemma~\ref{lem:DoublyFree}(iii), we have $M \cap N = \{1\}$. In view of Lemma~\ref{lem:Nfree}(v), we deduce that $MN \cap G_{x_i} \neq \triv$. Since $G_{x_i} \cong F_i$ is almost simple with socle $\Alt(d_i)$ or $\Alt(5)$ if $d_i =6$, we deduce that  $G^+_{x_i} := G_{x_i} \cap MN$ contains the socle of $G_{x_i}$, and has index $1$ or $2$ in $G_{x_i}$. Lemma~\ref{lem:Nfree}(v) also implies that $M \cap G_{x_i} = \triv$, so that the natural homomorphism $MN \to N$ yields an injective homomorphism $G^+_{x_i} \to N$. By Lemma~\ref{lem:DoublyFree}(i) and (ii), we also have $|N| \leq |G_{x_i}|$, hence $|N| \leq 2 |G_{x_i}^+|$. It follows $G^+_{x_i}$ is isomorphic to a subgroup of index at most~$2$ in $N$. Since $N$ is characteristically simple and non-abelian, we deduce that $G^+_{x_i} \cong N$. In particular $N$, hence also $G^+_{x_i}$, is simple. This implies that $G^+_{x_i}$ is the socle of $G_{x_i}$. 
	
Since $G \cong N \rtimes G_{x_i}$ and $M \cap N = \{1\}$, the projection map $G \to G_{x_i}$ yields an injective homomorphism of  $M$ into $G_{x_i}$, whose image is normal since $M$ is normal in $G$. Since $G_{x_i}$ is almost simple with socle $G^+_{x_i}$, while $M$ is characteristically simple, it follows that $G^+_{x_i}  \cong M$. Using Lemma~\ref{lem:DoublyFree}(i) and (ii), we obtain that $G_{x_{3-i}} \cong M \cong N \cong G^+_{x_i}$, which is $\Alt(d_i)$ or $\Alt(5)$. Since $MN$ has index at most~$2$ in $G$ by Corollary~\ref{cor:BuMo}, we have $G_{x_{3-i}} \leq MN$.   Since the $G$-action on $VX_1 \times VX_2$ is free, we have $G^+_{x_i} \cap   G_{x_{3-i}}=\{1\}$. Applying Proposition~\ref{prop:Goursat} to the group $ MN   \cong M \times N$, we infer that one of the two groups $G^+_{x_i}$ or $G_{x_{3-i}}$ coincides with one of the two simple factors of $MN$. Both of those factors are normal subgroups of $G$. It finally follows that $G^+_{x_i}$ or $G_{x_{3-i}}$ is normal in $G$. Hence $G^+_{x_i}$ acts trivially on $VX_i$ or $G_{x_{3-i}}$ acts trivially on $VX_{3-i}$. This implies that $|VX_i| \leq 2$ or $|VX_{3-i}| \leq 2$, which contradicts Lemma~\ref{lem:Basic} since $d_1, d_2 \geq 3$. 
\end{proof}

\subsection{Assuming that  $N$ acts freely   on $VX_i$ but not on $VX_{3-i}$} 

\begin{lem}\label{lem:NonFree:1}
Let $i \in \{1, 2\}$. Assume that $N$ acts freely on    $VX_i$ and non-freely on $VX_{3-i}$. Then $G_{x_i}^+ := G_{x_i} \cap NG_{x_{3-i}}$ has index $1$ or $2$ in $G_{x_i}$  and $|G_{x_i}^+| = |N : N_{x_{3-i}}|$ divides $|G_{x_{3-i}} : N_{x_{3-i}}|$. Furthermore we have $|G_{x_i}| < |G_{x_{3-i}}|$. 
\end{lem}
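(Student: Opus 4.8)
The plan is to analyze the action of $N$ on $X_{3-i}$ via Corollary~\ref{cor:BuMo}, since the local action $F_{3-i}$ is $2$-transitive, hence quasi-primitive. Because $N$ does not act freely on $VX_{3-i}$, we are in case (ii) or (iii) of that corollary: either $N$ is transitive on $VX_{3-i}$, or $N$ has exactly two orbits forming a $G$-invariant bipartition. In both cases $N$ is transitive on geometric edges, so $NG_{x_{3-i}} = G$ if $N$ is vertex-transitive, while $NG_{x_{3-i}}$ has index $2$ in $G$ in the bipartite case. Setting $G_{x_i}^+ = G_{x_i} \cap NG_{x_{3-i}}$, the second isomorphism theorem inside $G$ shows $G_{x_i}^+$ has index $1$ or $2$ in $G_{x_i}$ accordingly (using Lemma~\ref{lem:Basic}(ii), $G = G_{x_i}G_{x_{3-i}}$, to identify the index of $NG_{x_{3-i}}$ in $G$ with the index of $G_{x_i}^+$ in $G_{x_i}$).

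Next I would compute the order of $G_{x_i}^+$. The key point is that $N$ acts \emph{freely} on $VX_i$, so the restriction to $N$ of the projection $G \to \Aut(X_i)$ is injective on $N$-cosets in a useful way: more precisely, since $G$ acts freely and transitively on $VX_1 \times VX_2$ (we are in $\mathcal F(F_1,F_2)$, so (Hyp6) holds), the stabilizer $G_{x_i}$ acts freely and transitively on $VX_{3-i}$ by Lemma~\ref{lem:Basic}. Inside this regular action of $G_{x_i}$ on $VX_{3-i}$, the subgroup $G_{x_i}^+ = G_{x_i} \cap NG_{x_{3-i}}$ consists exactly of those elements of $G_{x_i}$ lying in the coset set $N G_{x_{3-i}}$; identifying $VX_{3-i}$ with $G_{x_i}$ via the regular action and tracking how $N$-orbits pull back, the orbit of $x_{3-i}$ under $N$ has size $|N : N_{x_{3-i}}|$ and corresponds to $G_{x_i}^+$, giving $|G_{x_i}^+| = |N : N_{x_{3-i}}|$. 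For the divisibility, I would restrict attention to $G_{x_{3-i}}$: it also acts on $VX_{3-i}$ fixing $x_{3-i}$, and $N \cap G_{x_{3-i}} = N_{x_{3-i}}$ is normal in $G_{x_{3-i}}$ (as $N \trianglelefteq G$); the $N$-orbit of $x_{3-i}$ has size $|N : N_{x_{3-i}}|$, and this orbit is permuted by $G_{x_{3-i}}$, but more to the point the $N_{x_{3-i}}$-cosets inside $G_{x_{3-i}}$ have index $|G_{x_{3-i}} : N_{x_{3-i}}|$, and a counting/orbit argument comparing the $N$-action and the $G_{x_{3-i}}$-action on $VX_{3-i}$ (both subgroups of the regular $G_{x_i}$-action) shows $|N : N_{x_{3-i}}|$ divides $|G_{x_{3-i}} : N_{x_{3-i}}|$.

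Finally, for the strict inequality $|G_{x_i}| < |G_{x_{3-i}}|$: since $G$ acts freely and transitively on $VX_1 \times VX_2$ we have $|G| = |VX_1|\,|VX_2| = |G_{x_i}|\,|G_{x_{3-i}}|$ (so it suffices to show $|VX_i| < |VX_{3-i}|$, i.e. $|G_{x_{3-i}}| < |G_{x_i}|$ — I need to be careful about the direction). From $|G_{x_i}^+| = |N : N_{x_{3-i}}|$ dividing $|G_{x_{3-i}} : N_{x_{3-i}}|$ and $|G_{x_i}| \leq 2|G_{x_i}^+|$, I get $|G_{x_i}| \leq 2\,|G_{x_{3-i}} : N_{x_{3-i}}|$, which is at most $2|G_{x_{3-i}}|/|N_{x_{3-i}}|$. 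Since $N$ does not act freely on $VX_{3-i}$, the stabilizer $N_{x_{3-i}}$ is nontrivial; in fact, because $N$ is transitive on geometric edges (Corollary~\ref{cor:BuMo}) and $d_{3-i} \geq 3$, one checks $|N_{x_{3-i}}| \geq d_{3-i} \geq 3 > 2$, which forces $|G_{x_i}| < |G_{x_{3-i}}|$. I expect the main obstacle to be making the identification $|G_{x_i}^+| = |N : N_{x_{3-i}}|$ and the divisibility statement fully rigorous: it requires carefully transporting the $N$-orbit structure on $VX_{3-i}$ through the regular action of $G_{x_i}$ and reconciling it with the coset decomposition $G = G_{x_i}G_{x_{3-i}}$, where double-coset bookkeeping is easy to get slightly wrong.
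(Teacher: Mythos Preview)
Your overall structure matches the paper's proof closely: you correctly invoke Corollary~\ref{cor:BuMo} to get that $NG_{x_{3-i}}$ has index at most~$2$ in $G$, you use the regular action of $G_{x_i}$ on $VX_{3-i}$ to identify $|G_{x_i}^+|$ with the size $|N:N_{x_{3-i}}|$ of the $N$-orbit of $x_{3-i}$, and your final chain of inequalities (with $|N_{x_{3-i}}| \geq d_{3-i} \geq 3$, coming from the local transitivity of $N_{x_{3-i}}$ on $E(x_{3-i})$) is exactly the paper's.

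The one genuine gap is the divisibility step, which you yourself flag. Your proposed argument on $VX_{3-i}$ does not work as stated: $N$ and $G_{x_{3-i}}$ are not subgroups of $G_{x_i}$, so the phrase ``both subgroups of the regular $G_{x_i}$-action'' is meaningless, and there is no clean orbit argument on $VX_{3-i}$ producing the divisibility. The paper instead looks at the \emph{other} graph $X_i$, which is where the hypothesis ``$N$ acts freely on $VX_i$'' pays off. Since $N$ is free on $VX_i$, every $N$-orbit on $VX_i$ has size $|N|$; these orbits form a $G$-invariant partition of $VX_i$, and $G_{x_{3-i}}$ acts (sharply) transitively on $VX_i$ by Lemma~\ref{lem:Basic}, so $|N|$ divides $|VX_i| = |G_{x_{3-i}}|$. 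Writing $|N| = |N:N_{x_{3-i}}|\cdot |N_{x_{3-i}}|$ and $|G_{x_{3-i}}| = |G_{x_{3-i}}:N_{x_{3-i}}|\cdot |N_{x_{3-i}}|$ and cancelling $|N_{x_{3-i}}|$ gives the desired divisibility immediately. In short, exploit the free action on $VX_i$ rather than trying to bookkeep on $VX_{3-i}$.
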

\begin{proof}
The $N$-orbits on $VX_i$ define a $G$-invariant partition into subsets of size $|N|$. Since $G_{x_{3-i}}$ acts regularly on $VX_i$ by Lemma~\ref{lem:Basic}, we infer that $|N|$ divides $|G_{x_{3-i}}|$.  We have $|G_{x_{3-i}}| = |G_{x_{3-i}} : N_{x_{3-i}}| |N_{x_{3-i}}|$ and $|N| = |N : N_{x_{3-i}}| |N_{x_{3-i}}|$. It follows that  $ |N : N_{x_{3-i}}|$ divides $|G_{x_{3-i}} : N_{x_{3-i}}|$.

By Lemma~\ref{lem:Basic}, the group    $G_{x_i}$ acts regularly  on $VX_{3-i}$. The hypotheses imply that $N$ has at most two orbits on $VX_{3-i}$ by Corollary~\ref{cor:BuMo}, so that $NG_{x_{3-i}}$ has index $1$ or $2$ in $G$. Accordingly, the group  $G_{x_i}^+ = G_{x_i} \cap NG_{x_{3-i}}$ has index $1$ or $2$ in $G_{x_i}$, and we have $|G_{x_i}^+| = |N : N_{x_{3-i}}|$. 

Since $N_{x_{3-i}}$ is transitive on $E(x_{3-i})$  (see Lemma~\ref{lem:BuMo:LocallyQuasiPrimitive} and Corollary~\ref{cor:BuMo}), we have $|N_{x_{3-i}} | \geq d_{3-i} \geq 3$.  Thus $|G_{x_i}| \leq  2|G_{x_i}^+|  \leq  2|G_{x_{3-i}} : N_{x_{3-i}}|  \leq  2|G_{x_{3-i}}|/3 <  |G_{x_{3-i}}|$. 
\end{proof}

\subsection{A minimality condition} 

We shall now consider   $(X_1, X_2, G) \in \mathcal F(F_1, F_2)$ satisfying the following:
\begin{description}
	\item[(Min)] For all $(X'_1, X'_2, G') \in \mathcal F(F_1, F_2)$, we have $|VX'_1 \times VX'_2 | \geq |VX_1 \times VX_2 | $. 
\end{description}


\begin{lem}\label{lem:Min}
Let $(X_1, X_2, G) \in \mathcal F(F_1, F_2)$ satisfy (Min). Then there is $i \in \{1, 2\}$ such that the $N$-action  on the graph $X_i$  is not free. 
\end{lem}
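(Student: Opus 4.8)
The statement asserts that under the minimality hypothesis (Min), the minimal normal subgroup $N$ must act non-freely on at least one of the two graphs $X_1, X_2$. The natural strategy is a contradiction argument: assume $N$ acts freely on both $X_1$ and $X_2$, and then produce a smaller member of $\mathcal F(F_1, F_2)$, violating (Min). The key observation is that if $N$ acts freely on a graph $X_i$, one can form the quotient graph $\bar X_i = N \backslash X_i$, which is again a connected $d_i$-regular graph (freeness on geometric edges is precisely what guarantees the quotient is $d_i$-regular with no folding). So the plan is: pass to $\bar X_1 = N \backslash X_1$ and $\bar X_2 = N \backslash X_2$, and show the triple $(\bar X_1, \bar X_2, G/N)$ — or more precisely $(\bar X_1, \bar X_2, \bar G)$ for the appropriate quotient $\bar G$ acting faithfully — again lies in $\mathcal F(F_1, F_2)$, but with strictly fewer vertices since $|N| > 1$.

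First I would invoke Lemma~\ref{lem:DoublyFree:2cases}: since we are assuming $N$ acts freely on $VX_1$ and on $VX_2$, that lemma (whose hypotheses on $F_j$ are part of the standing assumptions via Theorem~\ref{thm:MainFinite}) gives that $N$ in fact acts freely on $X_1$ and on $X_2$, not merely on the vertex sets. This legitimizes forming the quotient graphs. Next I would verify the five conditions (Hyp1)--(Hyp5) plus (Hyp6) for the quotient triple. (Hyp1): each $\bar X_i$ is connected and $d_i$-regular because $N$ acts freely on $X_i$ (covering theory). The group to use is $\bar G = G / K$ where $K$ is the kernel of the $G$-action on $\bar X_1 \times \bar X_2$; one checks via Lemma~\ref{lem:Kernel} applied to each factor that this kernel is exactly $N$ (here the normality of $N$ in $G$ and the freeness of $N$ on each $X_i$ are used), so $\bar G \cong G/N$, which is finite — giving (Hyp2) — and acts faithfully on each $\bar X_i$, giving (Hyp4). (Hyp3) and (Hyp6): the induced $G/N$-action on $VX_1 \times VX_2$ descends to a free, transitive action of $G/N$ on $V\bar X_1 \times V\bar X_2$ — transitivity is immediate from transitivity upstairs, and freeness follows because a nontrivial $gN$ fixing a vertex pair downstairs would, together with an element of $N$, fix a vertex pair upstairs, forcing $g \in N$. (Hyp5): the local action is unchanged under the covering $X_i \to \bar X_i$, since a covering map restricts to a bijection on each $1$-ball and intertwines the local actions — this is exactly the content invoked in Proposition~\ref{prop:Infinite-Finite}.

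Having established $(\bar X_1, \bar X_2, \bar G) \in \mathcal F(F_1, F_2)$, I would conclude by cardinality: $|V\bar X_1 \times V\bar X_2| = |VX_1 \times VX_2| / |N|^2 < |VX_1 \times VX_2|$ since $|N| \geq 2$, contradicting (Min). Hence $N$ cannot act freely on both graphs, which is the claim.

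The main obstacle I anticipate is the careful bookkeeping in identifying the kernel $K$ of the $G$-action on $\bar X_1 \times \bar X_2$ and confirming it equals $N$: one must apply Lemma~\ref{lem:Kernel} on each factor separately (getting kernels $N$ on $\bar X_1$ and $N$ on $\bar X_2$) and then argue that an element acting trivially on both quotients already lies in $N$ — this is where the hypothesis that $N$ is a \emph{single} minimal normal subgroup (rather than two different normal subgroups, one per factor) matters, and it is reassuring that the converse direction of Proposition~\ref{prop:Infinite-Finite} is built on exactly this kind of argument. The verification of (Hyp6) (freeness downstairs) is a second point requiring care, since freeness is not automatically inherited by quotients in general — but here it works precisely because the kernel is $N$ itself, so no "new" stabilizers are created.
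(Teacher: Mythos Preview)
Your approach is correct and is exactly the paper's: assume $N$ acts freely on both graphs, form the quotient triple $(N\backslash X_1, N\backslash X_2, G/N)$, verify it lies in $\mathcal F(F_1, F_2)$ via Lemma~\ref{lem:Kernel}, and contradict (Min). One remark: your detour through Lemma~\ref{lem:DoublyFree:2cases} is unnecessary and its justification is slightly off. The negation of the conclusion of Lemma~\ref{lem:Min} is already that $N$ acts freely on the \emph{graphs} $X_1$ and $X_2$ (vertices and geometric edges), so there is nothing to upgrade; moreover, the extra hypothesis ``$F_j \geq \Alt(d_j)$ if $d_j \geq 7$'' required by Lemma~\ref{lem:DoublyFree:2cases} is \emph{not} among the standing hypotheses (Hyp1)--(Hyp6) under which Lemma~\ref{lem:Min} is stated, so you should not invoke it here. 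Simply drop that paragraph and your proof matches the paper's.
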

\begin{proof}
	If the $N$-action on $X_i$ were free for $i=1$ and $2$, then the triple 
	$$(N\backslash X_1, N\backslash X_2, G/N)$$ 
	would belong to $\mathcal F(F_1, F_2)$ by Lemma~\ref{lem:Kernel} (see also Proposition~\ref{prop:Infinite-Finite}), which would contradict the hypothesis (Min). 	
\end{proof}

\begin{lem}\label{lem:NonFree:2}
Let $(X_1, X_2, G) \in \mathcal F(F_1, F_2)$ satisfy (Min). 
Assume moreover that there is $i \in \{1, 2\}$ such that $N$ does not act freely on    $VX_i$. Then $\mathrm{C}_G(N)=\{1\}$. In particular $N$ is not abelian. Moreover $\max \{d_1, d_2\} \geq 5$. 
\end{lem}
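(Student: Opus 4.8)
The plan is to show that if $C_G(N)\ne\{1\}$, one of the structural dichotomies from the preliminaries is violated. First I would observe that $M:=C_G(N)$ is a normal subgroup of $G$, so Corollary~\ref{cor:BuMo} applies to $M$ on both $X_1$ and $X_2$. The aim is to rule out every possibility. Since $M$ centralizes $N$, whenever $N$ and $M$ both act transitively on the vertices of the same $X_j$, the point stabilizers $N_{y}$ and $N_{y'}$ for $y,y'\in VX_j$ coincide; combined with the fact (Lemma~\ref{lem:BuMo:LocallyQuasiPrimitive}/Corollary~\ref{cor:BuMo}) that $N_y$ is transitive on $E(y)$ in cases (ii)/(iii), this forces $|VX_j|\le 2$, which contradicts $d_j\ge 3$ and local $2$-transitivity (a graph with $\le 2$ vertices of degree $d_j\ge 3$ has automorphism group too small to act $2$-transitively on $E(y)$, or rather the vertex stabilizer in $G$ would be $2$-transitive on $d_j\ge3$ points while fixing at most one other vertex — the point is exactly the argument already used in Corollary~\ref{cor:BuMo:2}). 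So on each $X_j$, either $M$ acts freely on vertices, or $M$ is non-transitive on vertices with two orbits forming a bipartition.

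Next I would use the standing hypothesis that there is $i$ with $N$ not acting freely on $VX_i$. By Corollary~\ref{cor:BuMo:2} applied with this $N$ and $M=C_G(N)$: if $M$ does not act freely on $VX_i$ either, then $|VX_i|\le 2$ or $X_i$ is complete bipartite with $N$ sharply transitive on geometric edges. The former is impossible as above; in the latter case I claim we contradict (Min). Indeed, if $N$ is sharply transitive on the geometric edges of a complete bipartite graph $X_i = \mathbf{K}_{d_i,d_i}$, then $N$ has exactly two vertex orbits and the quotient situation, or a direct count, shows $X_i$ can be replaced by a smaller graph in $\mathcal F(F_1,F_2)$ — more precisely, one passes to the $N$-quotient on the factor where $N$ \emph{does} act freely and handles $X_i$ separately; this is where I must be slightly careful. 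The cleaner route: having reduced to $X_i$ complete bipartite with $N$ of order $d_i^2$ acting sharply edge-transitively, I can build a strictly smaller member of $\mathcal F(F_1,F_2)$ using the converse part of Proposition~\ref{prop:Infinite-Finite} together with the explicit small example structure, contradicting (Min). Alternatively, and more robustly, one shows $C_M(N)$ hitting a bipartition still leads to a $G$-invariant bipartition of $X_i$ that is incompatible with $N$ acting non-freely there while $M$ also acts non-freely — a direct contradiction with the free $G$-action (Hyp6) because the common vertex stabilizer would be non-trivial.

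So I am forced into: $M=C_G(N)$ acts freely on $VX_i$. I would then run the same analysis on $X_{3-i}$. If $N$ acts freely on $VX_{3-i}$, apply Lemma~\ref{lem:NonFree:1} (with roles as there) or Lemma~\ref{lem:DoublyFree} to see that $M$ together with $N$ would over-determine the stabilizers; concretely, since $G$ acts freely on $VX_1\times VX_2$ (Hyp6), we have $G_{x_1}\cap G_{x_2}=\{1\}$ by Lemma~\ref{lem:Basic}(iv), and both $N_{x_i}$ and $M_{x_i}$ lie in $G_{x_i}$; freeness of $M$ on $VX_i$ means $M\cap G_{x_i}=\{1\}$, so $M$ embeds into $G/G_{x_i}\cong$ the action on $VX_i$, which is a Cayley graph of $G_{x_{3-i}}$ — pushing this through, $M$ and $N$ generate a direct product $M\times N$ inside $G$ (they centralize each other and, being minimal-normal-related, intersect trivially), and then $N$ not acting freely on $VX_i$ while $M$ does, with $MN$ normal, contradicts Corollary~\ref{cor:BuMo} applied to $MN$. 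The upshot in every branch is a contradiction, so $C_G(N)=\{1\}$.

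Finally, the two "in particular" clauses: $C_G(N)=\{1\}$ implies $Z(N)\le Z(G)\le C_G(N)=\{1\}$, so $N\cong S^k$ has trivial center, i.e. $S$ is non-abelian (a nontrivial characteristically simple group with trivial center cannot be elementary abelian). And once $N$ is non-abelian, $F_i$ — the local action on the factor $X_i$ where $N$ acts non-freely — must have $|F_i|$ divisible by at least three primes by the argument of Lemma~\ref{lem:DoublyFree}(iv)--(v) (or directly: a non-solvable point stabilizer is forced, via Burnside, since a $\{2,3\}$-group is solvable hence would force $N$ solvable through the divisibility relations of Lemma~\ref{lem:NonFree:1}); consulting Table~\ref{tab:Small2Trans}, no $2$-transitive group of degree $3$ or $4$ has three prime divisors, so $d_i\ge 5$, hence $\max\{d_1,d_2\}\ge 5$. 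The main obstacle I anticipate is the complete-bipartite branch: pinning down exactly why $X_i = \mathbf{K}_{d_i,d_i}$ with $N$ sharply edge-transitive contradicts (Min) rather than merely the freeness hypothesis requires either invoking Proposition~\ref{prop:Infinite-Finite} to exhibit a smaller triple, or a careful direct counting argument about the $MN$-action; I would prioritize getting that one case airtight, since the rest is bookkeeping with the corollaries already in hand.
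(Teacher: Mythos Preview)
Your instinct that the complete-bipartite branch is the crux is correct, but neither of your two proposed resolutions works, and the paper takes a quite different route there.

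\textbf{Why your approaches fail in the $\mathbf{K}_{d_i,d_i}$ case.} If $N$ is sharply transitive on the geometric edges of $X_i=\mathbf K_{d_i,d_i}$, then $N_{x_i}$ has order $d_i\ge 3$, so $N$ does \emph{not} act freely on $VX_i$. Hence you cannot form $N\backslash X_i$ and there is no smaller triple in $\mathcal F(F_1,F_2)$ to exhibit via Proposition~\ref{prop:Infinite-Finite}; the (Min) route is blocked. Your alternative, a direct contradiction with (Hyp6), also fails: (Hyp6) only says $G_{x_1}\cap G_{x_2}=\{1\}$, which is perfectly compatible with $N_{x_i}\neq\{1\}$ and $M_{x_i}\neq\{1\}$ simultaneously.

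\textbf{What the paper actually does.} In this branch the paper argues numerically. Since $N_{x_i}$ is regular on $E(x_i)$, the $2$-transitive group $F_i$ has a regular normal subgroup, so $F_i$ is of affine type and $d_i$ is a prime power. Also $|G_{x_{3-i}}|=|VX_i|=2d_i$, and since $G_{x_{3-i}}$ acts $2$-transitively on $d_{3-i}$ points, $d_{3-i}(d_{3-i}-1)\mid 2d_i$. With $d_i$ a prime power and $d_{3-i}\ge 3$, this forces $d_i=d_{3-i}=3$. Then $N\cong\mathbf C_3^2$, $G_{x_{3-i}}\cong\Sym(3)$, and $|G_{x_i}|\in\{6,12\}$. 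A short analysis of how involutions in $G_{x_i}$ act on the four cyclic subgroups of $N$ (using minimality of $N$) forces $|G_{x_i}|=12$, whence $X_{3-i}$ is a cubic, locally $2$-transitive Cayley graph on $12$ vertices; this is excluded by the Li--Lu classification \cite{LiLu}. None of this is ``bookkeeping with the corollaries already in hand.''

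\textbf{A structural issue with your setup.} You take $M=\mathrm C_G(N)$ itself. The paper instead chooses $M$ to be a \emph{minimal} normal subgroup of $G$ contained in $\mathrm C_G(N)$. This matters in two places. First, to dispose of the case where $M$ acts freely on both $VX_1$ and $VX_2$: by (Min) one cannot have $M$ acting freely on both graphs, so $M$ must invert an edge of some $X_j$; then Lemma~\ref{lem:Nfree} gives $M$ regular on $VX_j$, and the centralizing relation forces $|VX_j|\le 2$ (after possibly swapping indices via Lemmas~\ref{lem:DoublyFree} and~\ref{lem:NonFree:1}). Your sketch gestures at this but never invokes (Min) for $M$, and your claim that ``$M$ and $N$ intersect trivially'' is false in general: if $N$ turns out to be abelian (precisely the thing you are trying to exclude!), then $N\le \mathrm C_G(N)=M$. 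Second, minimality of $M$ is what lets the paper place $M$ and $N$ on equal footing in the $\mathbf K_{d_i,d_i}$ analysis.

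The ``in particular'' clauses are fine in spirit, though your citations are off: Lemma~\ref{lem:DoublyFree}(iv)--(v) and Lemma~\ref{lem:NonFree:1} concern different freeness configurations. The clean argument is the paper's: if $d_1,d_2\le 4$ then $|G|=|G_{x_1}||G_{x_2}|$ is a $\{2,3\}$-number by Corollary~\ref{cor:TroWeiss}, hence $G$ is solvable by Burnside and $N$ is abelian, contradicting $\mathrm C_G(N)=\{1\}$.
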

\begin{proof}
Assume that $\mathrm{C}_G(N) \neq \{1\}$. Let $M \neq \{1\}$ be a minimal normal subgroup of $G$ contained in $\mathrm{C}_G(N)$. 	We aim at finding a contradiction. 

Suppose first that $M$ acts freely on both $VX_1$ and $VX_2$. By Lemma~\ref{lem:Min}, the $M$-action cannot be free on the set of geometric edges of both $X_1$ and $X_2$. 

Assume  that $M$  is not free on $X_i$. Then, by Lemma~\ref{lem:Nfree}, the $M$-action is regular on $VX_i$. Since $[M, N]=\{1\}$,  the group $N_{x_i}$ fixes pointwise the $M$-orbit of $x_i$. Thus $N_{x_i}$ acts trivially on $VX_i$. On the other hand $N$ has at most two orbits on $VX_i$ by Corollary~\ref{cor:BuMo}, since the $N$-action on $VX_i$ is not free by hypothesis. It follows that $|VX_i|  \leq  2$, so that $|G_{x_{3-i}}| \leq  2$ by Lemma~\ref{lem:Basic}, which is absurd.
 
Assume now that $M$ is not free on $X_{3-i}$. Then $|G_{x_i}|$ divides $|G_{x_{3-i}}|$ by Lemma~\ref{lem:DoublyFree}(ii). In particular $|G_{x_i}| \leq |G_{x_{3-i}}|$, so that  the $N$-action cannot be free on $VX_{3-i}$ by Lemma~\ref{lem:NonFree:1}. We may thus apply the same argument as in the preceding paragraph to conclude that $|VX_{3-i}| \leq  2$, leading to a contradiction.  

This shows that the $M$-action cannot be free on both $VX_1$ and $VX_2$. 

Assume first that the $M$-action is not free on $VX_i$. We may then invoke Corollary~\ref{cor:BuMo:2}, which ensures that $X_i$ is the complete bipartite graph $\mathbf K_{d_i, d_i}$ and that $M$ and $N$ both act  regularly on the set of geometric edges of $X_i$. In particular $N_{x_i}$ acts regularly on $E(x_i)$. It follows that the $2$-transitive permutation group $F_i$ has a regular normal subgroup. Thus $F_i$  is of affine type and $d_i$ is a prime power. We also have $|G_{x_{3-i}}| = |VX_i | = 2d_i$.  Since $G_{x_{3-i}}$ admits a $2$-transitive permutation action on $d_{3-i}$ points, we deduce that $d_{3-i}(d_{3-i}-1)$ divides $2d_i$. Since $d_i$ is a prime power, we must have $ d_{3-i}= 3$ and $d_i$ is a power of $3$. Applying Corollary~\ref{cor:TroWeiss} to the stabilizer $G_{x_{3-i}}$, which has order~$2d_i$, we deduce that $d_i=3$.   Therefore  $|G_{x_{3-i}}| = |VX_i | = 6$ and hence $G_{x_{3-i}} \cong \Sym(3)$. Moreover $|G_{x_i}| \in \{6, 12\}$ since $X_i \cong \mathbf K_{3, 3}$.

Since $M$ and $N$ both act  regularly on the set of geometric edges of $X_i$, we have $M = N \cong \mathbf C_3^2$. Notice that $N$ is a $3$-Sylow subgroup of $G$. It has $4$~cyclic subgroups of order~$3$, which are permuted by $G$.  Since $N$ is minimal normal in $G$, that permutation action must be fixed-point-free. Each involution in $G$ has $0$, $2$ or $4$ fixed points in $VX_i$, and if some involution fixes $4$ vertices, then $G_{x_i}^{[1]}$ is non-trivial. 
Assume now that $|G_{x_i}| = 6$, so that $G_{x_i} \cong \Sym(3)$ and $G_{x_i}^{[1]}=\{1\}$. Then  every involution  $\sigma \in G_{x_i}$ has $2$ fixed points on $VX_i$. It follows that its conjugation action on $N$ maps each element on its inverse, and hence it acts trivially on the set of cyclic subgroups  of $N$. This implies that the cyclic subgroup of $G_{x_{3-i}}$ of order~$3$ is normalized by both $G_{x_1}$ and $G_{x_2}$. Thus it is normal in $G$ by Lemma~\ref{lem:Basic}, contradicting the minimality of $N$. We deduce that $|G_{x_i}| = 12$. 
Hence  $X_{3-i}$ is a $3$-regular graph with $12$~vertices which is also a Cayley graph for  $G_{x_i}$  on which the group $G$ acts locally $2$-transitively.  Such a graph does not exist by \cite[Theorem~1.1]{LiLu}.

We conclude finally that  the $M$-action is free on $VX_i$, and  non-free on $VX_{3-i}$.  Lemma~\ref{lem:NonFree:1} successively implies that $|G_{x_i}| < |G_{x_{3-i}}|$, and  that the $N$-action on $VX_{3-i}$ cannot be free. We may finish the proof by swapping $X_1$ and $X_2$ and use the same argument as in the previous paragraph.  This confirms that $\mathrm{C}_G(N) = \triv$. 

If $d_1, d_2 \leq 4$, then  the only prime divisors of $|G|$ would be $2$ and $3$, so that $G$ would be solvable and $N$ abelian, a contradiction. 
\end{proof}	

\subsection{Assuming that $|F_i|$ has only two prime divisors}

\begin{lem}\label{lem:F2primes}
Let $(X_1, X_2, G) \in \mathcal F(F_1, F_2)$  satisfy (Min). 
Assume that there is $i \in \{1, 2\}$ such that $|F_i|$ has only two primes divisors.  Then the $N$-action on $VX_{3-i}$ is not free. 
\end{lem}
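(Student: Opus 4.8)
\textbf{Proof proposal for Lemma~\ref{lem:F2primes}.}

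The plan is to argue by contradiction: suppose $|F_i|$ has exactly two prime divisors and yet $N$ acts freely on $VX_{3-i}$. There are two cases according to whether $N$ acts freely on $VX_i$ as well. If $N$ acts freely on both $VX_1$ and $VX_2$, then by Lemma~\ref{lem:DoublyFree:2cases} (whose hypotheses are in force here, since we are under the standing assumptions of Theorem~\ref{thm:MainFinite}, so $F_j \geq \Alt(d_j)$ when $d_j \geq 7$) the action of $N$ is free on the graphs $X_1$ and $X_2$ themselves. But then the triple $(N\backslash X_1, N\backslash X_2, G/N)$ lies in $\mathcal F(F_1,F_2)$ by Lemma~\ref{lem:Kernel} and the covering theory, contradicting (Min). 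Hence $N$ does not act freely on $VX_i$, while by assumption it does act freely on $VX_{3-i}$.

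Now I am in the situation of Lemma~\ref{lem:NonFree:1} with the roles of $i$ and $3-i$ swapped: $N$ acts freely on $VX_{3-i}$ and non-freely on $VX_i$. That lemma gives $|G_{x_{3-i}}| < |G_{x_i}|$, and moreover that $G_{x_{3-i}}^+ = G_{x_{3-i}} \cap NG_{x_i}$ has index $1$ or $2$ in $G_{x_{3-i}}$ with $|G_{x_{3-i}}^+| = |N:N_{x_i}|$ dividing $|G_{x_i}:N_{x_i}|$. Also, by Lemma~\ref{lem:NonFree:2}, $\mathrm{C}_G(N)=\triv$, so $N$ is non-abelian and (being minimal normal in $G$) is a direct power $S^k$ of a non-abelian finite simple group $S$; in particular $|N|$ is divisible by at least three distinct primes by Burnside. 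The key tension to exploit is that $|F_i|$ has only two prime divisors, so $|G_{x_i}|$ has only the primes $\pi(F_i)$ in its prime divisor set (since $G_{x_i}^{[2]} = \triv$ and the iterated structure of the stabilizer, via Corollary~\ref{cor:TroWeiss}, controls which primes can appear — here I should check carefully that for $d_i \leq 6$ the list in Corollary~\ref{cor:TroWeiss} never introduces a third prime beyond $\pi(F_i)$; for $d_i \leq 4$ this is $\{2,3\}$, for $d_i=5$ with $F_i \cong \mathbf C_5\rtimes \mathbf C_4$ it is $\{2,5\}$, and those are the only $2$-transitive groups of degree $\leq 6$ with a two-element prime set, which forces $d_i \leq 5$). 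Then I want to transfer this restriction to $N$: since $N_{x_i} \leq G_{x_i}$, the group $N_{x_i}$ is a $\pi(F_i)$-group, and since $N/N_{x_i}$ embeds into the regular action issues via $G_{x_i}$-transitivity... more directly, $|N| = |N:N_{x_i}|\cdot|N_{x_i}|$ with $|N:N_{x_i}|$ dividing $|G_{x_i}|$ and $|N_{x_i}|$ dividing $|G_{x_i}|$, so $|N|$ is a $\pi(F_i)$-number with only two prime divisors — contradicting that $|N|=|S|^k$ has at least three prime divisors by Burnside's $p^aq^b$ theorem.

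The step I expect to be the main obstacle is the bookkeeping that pins down $\pi(G_{x_i}) = \pi(F_i)$ when $|F_i|$ has only two primes: one must rule out that the higher levels of the vertex stabilizer $G_{x_i}^{[1]}$ contribute a new prime. This is exactly where Corollary~\ref{cor:TroWeiss} is needed, together with the observation that the two-prime hypothesis on $|F_i|$ forces $d_i \in \{3,4\}$ or $(d_i, F_i) = (5, \mathbf C_5\rtimes\mathbf C_4)$ — inspection of Table~\ref{tab:Small2Trans} — and in each of these cases the explicit divisibility constraints in Corollary~\ref{cor:TroWeiss} show $\pi(G_{x_i})$ is contained in $\{2,3\}$ or $\{2,5\}$ accordingly, hence has at most two elements. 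Once that is in hand, the contradiction with $N = S^k$ and Burnside is immediate, so the remaining work is purely the careful case analysis over the short list of admissible $(d_i,F_i)$.
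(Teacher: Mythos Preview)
Your argument is essentially correct but takes a longer route than the paper, and introduces one unnecessary dependency.

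The paper's proof avoids your case split entirely. The key observation is that if $N$ acts freely on $VX_{3-i}$, then every $N$-orbit on $VX_{3-i}$ has size $|N|$, so $|N|$ divides $|VX_{3-i}| = |G_{x_i}|$ directly (Lemma~\ref{lem:Basic}). Since $|G_{x_i}|$ has only two prime divisors, so does $|N|$; hence the characteristically simple group $N$ is abelian by Burnside. From there the paper concludes: $N$ abelian forces $N$ to act freely on $VX_i$ (otherwise Lemma~\ref{lem:NonFree:2} would give $\mathrm C_G(N)=\triv$, whence $N$ non-abelian), and $N$ abelian also forces $N$ to act freely on geometric edges of both $X_1$ and $X_2$ (otherwise Lemma~\ref{lem:DoublyFree}(iii) would give $N$ non-abelian). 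This contradicts Lemma~\ref{lem:Min}.

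Your Case~B recovers the divisibility $|N:N_{x_i}| \mid |G_{x_i}|$ via Lemma~\ref{lem:NonFree:1}, which is correct but indirect --- the free-orbit count gives $|N| \mid |G_{x_i}|$ in one step and does not require $N$ to be non-free on $VX_i$. Your Case~A, on the other hand, invokes Lemma~\ref{lem:DoublyFree:2cases}, whose hypothesis that $F_j \geq \Alt(d_j)$ for $d_j \geq 7$ is \emph{not} among the standing hypotheses (Hyp1)--(Hyp6) of Section~\ref{sec:not}, nor is it assumed in the statement of Lemma~\ref{lem:F2primes}. So as written, your proof of Case~A establishes only a weaker statement than the lemma claims. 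The paper's route through Lemma~\ref{lem:DoublyFree}(iii) avoids this: once $N$ is known to be abelian, that lemma (in contrapositive) applies with no extra hypothesis on the $F_j$. The fix to your argument is simply to observe, at the outset and before any case split, that $|N| \mid |G_{x_i}|$; then both of your cases collapse into the paper's short uniform argument.
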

\begin{proof}
The hypothesis on $F_i$ implies that $|G_{x_i}|$ has only two primes divisors.  Suppose for a contradiction that  $N$ acts freely on $VX_{3-i}$. Then $|N|$ divides  $|VX_{3-i}| = |G_{x_i}|$ by Lemma~\ref{lem:Basic}. So the characteristically simple group $N$ must be abelian. Hence $N$ acts freely on $VX_i$ by Lemma~\ref{lem:NonFree:2}, and also freely on the set of geometric edges of $X_1$ and $X_2$ by Lemma~\ref{lem:DoublyFree}(iii). This contradicts Lemma~\ref{lem:Min}. 
\end{proof}

\subsection{Assuming that $F_1 \cong \mathbf{C}_5 \rtimes \mathbf{C}_4$}

\begin{lem}\label{lem:C5C4} 
	Let $(X_1, X_2, G) \in \mathcal F(F_1, F_2)$ satisfy (Min). Assume that $d_1 = 5$, $F_1 \cong \mathbf{C}_5 \rtimes \mathbf{C}_4$. Then  $d_2 \geq 5$. 
\end{lem}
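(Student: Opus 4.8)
The plan is to argue by contradiction: assume $d_2 \in \{3, 4\}$ and derive a contradiction from (Min) together with the structure theory developed above. Since $F_1 \cong \mathbf{C}_5 \rtimes \mathbf{C}_4$ has order $20 = 2^2 \cdot 5$, and (assuming $d_2 \leq 4$) the group $F_2$ has order dividing $24$ and hence involves only the primes $2$ and $3$, the integer $|G|$ can only be divisible by the primes $2, 3, 5$. In particular $G$ is solvable by Burnside's $p^aq^b$-theorem applied twice (or, more efficiently, $|G_{x_1}|$ involves only $2, 5$ and $|G_{x_2}|$ only $2, 3$, and $G = G_{x_1} G_{x_2}$ by Lemma~\ref{lem:Basic}(ii), so $\pi(G) \subseteq \{2,3,5\}$). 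Then the minimal normal subgroup $N$ is elementary abelian.

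Next I would feed this into the dichotomy machinery. By Lemma~\ref{lem:Min} there is $i \in \{1,2\}$ with the $N$-action on $X_i$ not free, hence $N$ is not free on some $VX_i$ or $N$ inverts an edge. If $N$ fails to be free on $VX_i$ for some $i$, then Lemma~\ref{lem:NonFree:2} gives $\mathrm{C}_G(N) = \{1\}$ and, crucially, $N$ is non-abelian — contradicting the previous paragraph. So $N$ must act freely on both $VX_1$ and $VX_2$ but fail to act freely on the geometric edges of some $X_i$; this is exactly the hypothesis of Lemma~\ref{lem:DoublyFree}, whose conclusion (iii) states that $N$ is \emph{not} abelian, again a contradiction. (Alternatively: Lemma~\ref{lem:DoublyFree}(iv) would force $d_1, d_2 \geq 5$, already contradicting $d_2 \leq 4$.) Either way we are done, provided we have correctly excluded all cases.

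The one point requiring care — and the main obstacle — is making sure the case analysis is exhaustive and that the cited lemmas genuinely apply, since several of them carry the standing hypothesis ``$F_j \geq \Alt(d_j)$ if $d_j \geq 7$'', which is harmless here as $d_1 = 5$ and $d_2 \leq 4$. In particular, to invoke Lemma~\ref{lem:DoublyFree} one needs $N$ free on \emph{both} vertex sets but non-free on geometric edges of \emph{some} $X_i$; to invoke Lemma~\ref{lem:NonFree:2} one needs (Min) and $N$ non-free on some $VX_i$. Since ``$N$ not free on $X_i$'' means precisely ``$N$ not free on $VX_i$, or $N$ free on $VX_i$ but inverting a geometric edge of $X_i$,'' the two lemmas between them cover every possibility coming out of Lemma~\ref{lem:Min}, so no case is lost. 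Thus the assumption $d_2 \leq 4$ is untenable, and since $F_2$ is $2$-transitive, $d_2 \geq 3$ is automatic; hence $d_2 = 3$ or $4$ is excluded and $d_2 \geq 5$, as claimed. $\qed$
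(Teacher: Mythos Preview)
Your argument has a genuine gap at the crucial step: the claim that $G$ is solvable is incorrect. Burnside's $p^aq^b$-theorem applies only to groups whose order has at most \emph{two} prime divisors; it says nothing about $\{2,3,5\}$-groups, and indeed $\Alt(5)$ itself is a non-solvable $\{2,3,5\}$-group. The parenthetical alternative you offer does not help either: it correctly shows $\pi(G) \subseteq \{2,3,5\}$, but the factorization $G = G_{x_1} G_{x_2}$ with both factors solvable does \emph{not} force $G$ to be solvable. Again $\Alt(5) = \Alt(4) \cdot \mathbf{C}_5$ is a counterexample (with $\Alt(4)$ a $\{2,3\}$-group and $\mathbf{C}_5$ a $\{5\}$-group, intersecting trivially). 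So you have not shown that $N$ is abelian, and the contradiction you derive in the second paragraph evaporates: the lemmas you cite correctly prove that $N$ is \emph{not} abelian, but that no longer contradicts anything.

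In fact your second paragraph is essentially what the paper does to start its proof: invoke Lemma~\ref{lem:F2primes} (since both $|F_1|$ and $|F_2|$ have only two prime divisors) to see that $N$ acts non-freely on both $VX_1$ and $VX_2$, then Lemma~\ref{lem:NonFree:2} to conclude $N$ is non-abelian. The real work begins after that. The paper observes that $|G_{x_1}|$ is divisible by $5$ but not by $25$ (from Corollary~\ref{cor:TroWeiss} for $F_1 \cong \mathbf{C}_5 \rtimes \mathbf{C}_4$), hence the same holds for $|G|$; this forces the characteristically simple group $N$ to be genuinely simple, and then Proposition~\ref{prop:HL} pins $N$ down to one of $\Alt(5)$, $\Alt(6)$, or $\mathrm{U}_4(2)$. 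Each of these three cases is then eliminated individually using results on cubic $s$-arc transitive Cayley graphs and on factorizations of almost simple groups with a solvable factor. That case analysis is unavoidable with the tools at hand; your shortcut does not work.
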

\begin{proof}
Suppose for a contradiction that $d_2 \leq 	4$. In particular $F_2$ is a $\{2, 3\}$-group, and $G_{x_2}$ is a  $\{2, 3\}$-group as well.  Moreover the hypothesis on $F_1$ implies that $G_{x_1}$ is a $\{2, 5\}$-group whose order is not divisible by~$25$. In particular   $G$ is a $\{2, 3, 5\}$-group whose order is divisible by $5$ but not by $25$ in view of  Lemma~\ref{lem:Basic}. 

Lemma~\ref{lem:F2primes} ensures that the  $N$-actions on $VX_1$ and on  $VX_2$ are both non-free. Thus $\mathrm C_G(N)=\{1\}$  by Lemma~\ref{lem:NonFree:2}; in particular $N$ is not abelian. Thus $5$ divides $|N|$, and since $25$ does not divide $|G|$, we infer that $N$ is simple non-abelian and that $G$ is almost simple. 
From Proposition~\ref{prop:HL}, we have $N \cong \Alt(5), \Alt(6)$ or $\mathrm U_4(2)$. Moreover Lemma~\ref{lem:Basic} affords a  factorization $G = G_{x_1} G_{x_2}$ of $G$ as a product of two solvable subgroups. 

If $N \cong \Alt(5)$, then $|G| = 60$ or $120$, while $|G_{x_1}| = 20, 40$ or $80$ by Corollary~\ref{cor:TroWeiss}. Therefore $|G_{x_2}| \leq 6$, whence $d_2 = 3$ and  $G_{x_2} \cong \Sym(3)$. Hence the graph $X_2$, which is a Cayley graph for $G_{x_1}$, contradicts \cite[Theorem~1.1]{LiLu} in that case. 

If $N \cong \Alt(6) \cong \PSL_2(\FF_9)$, we invoke  \cite[Proposition~4.1]{LiXia} and deduce that $G_{x_2}$ has a  normal $3$-Sylow subgroup, whose order is~$9$. Thus $d_2=4$ by Corollary~\ref{cor:TroWeiss}, and we get a contradiction since a finite group with  a  normal $3$-Sylow subgroup cannot have a quotient isomorphic to $\Alt(4)$ or $\Sym(4)$. 

Finally, if $N \cong \mathrm U_4(2)$, then \cite[Proposition~4.1]{LiXia}  ensures that $G_{x_1}$ has a normal subgroup isomorphic to $\mathbf C_2^4$. Since the only normal $2$-group in $G_{x_1}/G_{x_1}^{[1]} \cong \mathbf{C}_5 \rtimes \mathbf{C}_4$ is the trivial one, we deduce that $G_{x_1}^{[1]}$ contains a subgroup isomorphic to $\mathbf  C_2^4$. By Corollary~\ref{cor:AlmostSimplePointStab}, the group $G_{x_1}^{[1]}$ is isomorphic to a subgroup of a point stabilizer in $F_1$. In particular $|G_{x_1}^{[1]}| \leq 4$, a contradiction. 
\end{proof}

\subsection{If $N$ is not simple then $X_1$ is a complete bipartite graph}

\begin{lem}\label{lem:N-non-simple}
Let $(X_1, X_2, G) \in \mathcal F(F_1, F_2)$ satisfy (Min). Assume that:
\begin{enumerate}[(1)]
	\item  There is $i \in \{1, 2\}$ such that $N$ does not act freely on    $VX_i$.
	\item For $j = 1$ and $2$, if $d_j \geq 7$ then   $F_j \geq \Alt(d_j)$.
	\item  $N$ is not simple.
	\item $d_1 \geq d_2$.
\end{enumerate}
Then $X_1$ is the complete bipartite graph $\mathbf K_{d_1, d_1}$  and  one of  the following conditions holds: 
\begin{enumerate}[(i)]
\item $d_2 = 3$ and $d_1 = 24$.

\item $d_2 = 4$, and $ d_1 \in \big\{6n \mid n \geq 2  \text{ divides } 2^2\cdot 3^5\big\}$.  	

\item $d_2 =  5$, $F_2 \cong \mathbf{C}_5 \rtimes \mathbf{C}_4$ and  $d_1 \in \big\{10, 20, 40\big\}$. 	

\item $d_2 =  5$,  $\soc(F_2) \cong \Alt(5)$ and  $d_1 \in \big\{30n \mid n \geq 2 \text{ divides } 2^8 \cdot 3\big\}$. 	

\item $d_2 = 6$, $\soc(F_2) \cong \Alt(5)$, and $d_1 \in \big\{30n  \mid n \geq 2 \text{ divides } 2^3 \cdot 5^2\big\}$.

\item $d_2 \geq 6$, and $d_1 \in \big\{\frac{d_2!} 2,  \frac{d_2!(d_2-1)!} 4, \frac{d_2!(d_2-1)!} 2\big\}$. 

\end{enumerate}
\end{lem}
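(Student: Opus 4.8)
\emph{Plan of proof.} The argument proceeds in three steps: a reduction to $\mathrm{C}_G(N)=\triv$, the identification (for the relevant tree factor) of a complete bipartite graph, and the numerical extraction via Corollary~\ref{cor:TroWeiss}.

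\emph{Step 1 (reduction and the key assertion).} Hypotheses~(1) and (Min) place us in the setting of Lemma~\ref{lem:NonFree:2}, whence $\mathrm{C}_G(N)=\triv$; in particular $N$, hence the simple group $S$, is non-abelian, and $d_1=\max\{d_1,d_2\}\geq 5$. Write $N=S_1\times\cdots\times S_k$ with each $S_i\cong S$ and $k\geq 2$ by~(3); minimality of $N$ forces $G$ to permute the $S_i$ transitively, and by (Hyp4) the action of $N$ on each $X_i$ is faithful. We record that $N=S^k$ is perfect, hence has no subgroup of index~$2$. Everything will follow from the \emph{key assertion: if $N$ does not act freely on $VX_j$ for some $j\in\{1,2\}$, then $X_j\cong\mathbf K_{d_j,d_j}$.} Granting this, note that by~(1) there is such a $j$. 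If we may take $j=1$, we are done. Otherwise $N$ is free on $VX_1$ and not on $VX_2$, so the assertion gives $X_2\cong\mathbf K_{d_2,d_2}$ and hence $|G_{x_1}|=|VX_2|=2d_2$ by Lemma~\ref{lem:Basic}; but $G_{x_1}$ acts $2$-transitively on $d_1\geq d_2$ points, so $d_1(d_1-1)$ divides $2d_2$, which (using $d_2\leq d_1$) forces $d_1=d_2=3$, making $G$ a $\{2,3\}$-group, hence solvable, hence $N$ abelian --- a contradiction. So $X_1\cong\mathbf K_{d_1,d_1}$ in all cases, and $|G_{x_2}|=|VX_1|=2d_1$.

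\emph{Step 2 (proof of the key assertion).} Apply Corollary~\ref{cor:BuMo} to the normal subgroup $N$ of the vertex-transitive, locally quasi-primitive group $G$ acting on $X_j$: since $N$ is not free on $VX_j$, it is transitive on the set of geometric edges of $X_j$, has one or two orbits on $VX_j$, and (by Lemma~\ref{lem:BuMo:LocallyQuasiPrimitive}) $N_x$ acts transitively on $E(x)$ for every vertex $x$. The one-orbit case is excluded: a direct power $S^k$ with $k\geq 2$ of a non-abelian simple group cannot act faithfully, vertex-transitively and edge-transitively on a connected graph while being a normal subgroup of a group with $2$-transitive local action --- the presence of the $k\geq 2$ distinct minimal normal subgroups $S_i$ is the obstruction (ultimately this rests on the classification of finite $2$-transitive groups). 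Hence $N$ has exactly two orbits $A$ and $B$ on $VX_j$, forming a $G$-invariant bipartition with $|A|=|B|$. As $N$ is perfect it cannot interchange $A$ and $B$, so $N\leq\Sym(A)\times\Sym(B)$ inside $\Aut(X_j)$, and the kernels $N_A=\ker(N\to\Sym(A))$, $N_B=\ker(N\to\Sym(B))$ are products of some of the factors $S_i$ with $N_A\cap N_B=\triv$. Combining the transitivity of $N_x$ (for $x\in A$) on its neighbourhood $E(x)\subseteq B$ with a Goursat-type analysis of how $N$ sits in $\Sym(A)\times\Sym(B)$ relative to these kernels, one shows that any two vertices of $A$ sharing a neighbour have equal neighbourhoods; connectedness of $X_j$ then propagates this to all of $A$, so $X_j\cong\mathbf K_{d_j,d_j}$.

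\emph{Step 3 (extracting the list; the main obstacle).} By Step~1, $X_1\cong\mathbf K_{d_1,d_1}$ and $|G_{x_2}|=2d_1$. Moreover $N$ acts freely on $VX_2$: otherwise the key assertion would give $X_2\cong\mathbf K_{d_2,d_2}$, hence $|G_{x_1}|=2d_2$, contradicting the $2$-transitivity of $G_{x_1}$ on $d_1\geq 5$ points. Corollary~\ref{cor:TroWeiss}, applied to the vertex-transitive action of $G$ on $X_2$ with $2$-transitive local action $F_2$, confines $2d_1=|G_{x_2}|$ to one of its six explicit families of values. The two $N$-orbits on $VX_1$ are the parts of $\mathbf K_{d_1,d_1}$, so $|N:N_{x_1}|=d_1$; since $N$ is free on $VX_2$, Lemma~\ref{lem:NonFree:1} (applied with $i=2$) shows that $G_{x_2}^{+}=G_{x_2}\cap NG_{x_1}$ has index $2d_1/d_1=2$ in $G_{x_2}$, so $G_{x_2}$ has a subgroup of index~$2$. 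Using Corollary~\ref{cor:AlmostSimplePointStab} to describe $G_{x_2}$ in terms of $F_2$ and $G_{x_2}^{[1]}$, a short case analysis shows that this requirement rules out precisely the values of $|G_{x_2}|$ for which $G_{x_2}$ would be perfect or isomorphic to $\Alt(4)$, and that after halving the survivors one obtains exactly the lists~(ii)--(vi). For $d_2=3$ the a priori possibilities $d_1\in\{3,6,12,24\}$ collapse to $d_1=24$: $d_1=3$ makes $G$ a $\{2,3\}$-group and $N$ abelian, while $d_1\in\{6,12\}$ are excluded by comparing $|G|=2d_1\,|G_{x_1}|\leq|\Aut(N)|$ with the possible orders of $G_{x_1}$ given by Corollary~\ref{cor:TroWeiss}, or directly by \cite[Theorem~1.1]{LiLu}. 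The main obstacle is the key assertion of Step~2 --- excluding the vertex-transitive case, and carrying out the Goursat-style analysis that forces one side of the bipartition to be completely joined to the other; Steps~1 and~3 are then bookkeeping with the results already in hand.
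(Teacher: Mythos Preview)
Your proof has a genuine gap in Step~2: the ``key assertion'' that $N$ non-free on $VX_j$ forces $X_j\cong\mathbf K_{d_j,d_j}$ is precisely where the real work lies, and your sketch does not address the hard case. Your Goursat-type argument tacitly assumes that some simple factor $S_\ell$ of $N$ has a non-trivial vertex stabilizer in $X_j$; only then do the kernels $N_A,N_B$ carry information and the bipartite structure emerge as you describe. (Even in that case the paper needs, and first proves, that $\soc(F_j)$ is simple for the relevant $j$, i.e.\ $d_j\geq 5$ and $F_j\not\cong\mathbf C_5\rtimes\mathbf C_4$; you skip that preliminary claim.) But a priori it is entirely possible that \emph{every} $S_\ell$ acts freely on $VX_j$ while $N=S^k$ does not: then $N_{x_j}$ is a diagonal-type subgroup of $S^k$ meeting each factor trivially, both kernels $N_A,N_B$ vanish, and your argument yields nothing. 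For instance, with $k=2$ and $N_{x_j}$ a diagonal copy of $\Alt(6)$ inside $\Alt(6)\times\Alt(6)$, one would have $|N:N_{x_j}|=360$, so $X_j$ would have $360$ or $720$ vertices --- certainly not $\mathbf K_{6,6}$.

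The paper does not sidestep this situation; it confronts it head-on in a several-page analysis (its Cases~(1) and~(2)), separately for $d_1\leq 6$ and $d_1\geq 7$. There one bounds $k$ via prime-power counts in $|G_{x_1}|$ and $|G_{x_2}|$ coming from Corollary~\ref{cor:TroWeiss}, invokes the classification of simple $\{2,3,5\}$-groups (Proposition~\ref{prop:HL}) when $d_1\leq 6$, pins down $S$ and the structure of $N_{x_1},N_{x_2}$, and then repeatedly uses Lemma~\ref{lem:Thompson} to force $N_{x_1}\cap N_{x_2}\neq\triv$ once both stabilizers are identified as diagonal copies of $S$ inside $S\times S$. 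Only after all those subcases are eliminated by contradiction does the complete-bipartite conclusion stand. Your Step~2 packages this entire analysis into a one-sentence assertion (``the presence of the $k\geq 2$ distinct minimal normal subgroups $S_i$ is the obstruction''); Steps~1 and~3 are indeed bookkeeping, but that bookkeeping cannot begin until the diagonal case has actually been excluded.
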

\begin{proof}
The group $N$ is characteristically simple,  so that $N = S_1 \times \dots \times S_k$, where $S_i  $ is isomorphic to a  finite simple group $S$ for all $i$. Moreover $S$ is not abelian  and $d_1 \geq 5$ by Lemma~\ref{lem:NonFree:2}. The condition (3) ensures that $k \geq 2$. Furthermore we have $C_G(N)=\{1\}$ by Lemma~\ref{lem:NonFree:2}. 

The first step is to  establish the following. 

\begin{claim*}
There is $j \in \{1, 2\}$ such that $d_j \geq 5$, $F_j \not \cong \mathbf{C}_5 \rtimes \mathbf{C}_4$ and $N$ does not act freely on    $VX_j$.
\end{claim*}

By Lemma~\ref{lem:NonFree:2}, we have $d_1 \geq 5$. Assume that $j=1$ does not satisfy the claim. Then  either $N$ acts freely on $VX_1$, or $N$ does not act freely on $VX_1$ and  $F_1 \cong \mathbf{C}_5 \rtimes \mathbf{C}_4$. 

If $N$ acts freely on $VX_1$, then it acts non-freely on $VX_2$ by (1), and it follows from Lemma~\ref{lem:NonFree:1} that $|G_{x_1}|$ has a subgroup of index at most~$2$ whose order divides $|G_{x_2} : N_{x_2}|$. Since $d_1 \geq 5$, it follows that $|G_{x_1}|$, and thus also  $|G_{x_2}|$ is divisible by~$5$. In particular $d_2 \geq 5$. Moreover $|N|$ divides $|VX_1|$ which is equal to $|G_{x_2}|$ by Lemma~\ref{lem:Basic}.  Thus  $|G_{x_2}|$ has at least~$3$ prime divisors (because $N$ is not solvable). In particular $F_2 \not \cong \mathbf{C}_5 \rtimes \mathbf{C}_4$. Thus $j=2$ satisfies the claim in this case. 

Assume now that $N$ does not act freely on $VX_1$ and that $F_1 \cong \mathbf{C}_5 \rtimes \mathbf{C}_4$. Hence $d_2 \leq d_1 = 5$ by (4). In view of Lemm~\ref{lem:C5C4}, we have $d_2 = 5$. Moreover  $N$ does not act freely on $VX_2$ by Lemma~\ref{lem:F2primes}. It follows that  $\soc(F_2) = \Alt(5)$ since otherwise $G_{x_1}$ and $G_{x_2}$ would both be $\{2, 5\}$-groups, contradicting that $N$ is non-abelian. Thus $j=2$ satisfies the claim in this case as well. This ends the proof of the claim.

\medskip
In view of the claim, we may, upon replacing $i$ by $3-i$,  strengthen the hypothesis~(1) and assume in addition that $d_i \geq 5$ and that $F_i \not \cong \mathbf{C}_5 \rtimes \mathbf{C}_4$. In particular $\soc(F_i)$ is simple and $2$-transitive.

Assume next that the $S_1$-action on $VX_i$ is not free. In particular the $S_j$-action on $VX_i$ is not free for all $j \in \{1, \dots, k\}$ since the simple factors of $N$ are permuted transitively under the conjugation action of $G$. 

Since $d_i \geq 5$ and $\soc(F_i)$ is simple,  we know that the socle of $N_{v}/N_{v}^{[1]}$ is simple and $2$-transitive on $E(v)$ for every vertex $v \in VX_i$. For $j \neq m \in \{1, \dots, k\} $ and any $v \in VX_i$, it follows that if $(S_j)_v$ is non-trivial on $E(v)$ then $(S_m)_v$ is trivial on $E(v)$. 
We now apply Lemma~\ref{lem:BuMo:LocallyQuasiPrimitive} to  the normal subgroups $S_j$ and $S_m$ of $N$. For each of them we get a bipartition of $X_i$, and the previous observation together with the fact that $S_j$ and $S_m$ are conjugate in $G$ implies that $(S_j)_v$ is non-trivial on $E(v)$ if and only if $(S_m)_v$ is trivial on $E(v)$ for all $v \in VX_i$. Since this holds for all pairs $j \neq m \in \{1, \dots, k\} $, it follows that $k=2$. Given two adjacent vertices $v, w$ such that $(S_1)_v$ is non-trivial on $E(v)$, we know infer that $(S_1)_v$ fixes all neighbours of $w$ and $(S_2)_w$ fixes all neighbours of $v$. Using that $(S_1)_v$ is transitive on the neighbours of $v$ (resp.  $(S_2)_w$ is transitive on the neighbours of $w$) we deduce that $X_i$ is the complete bipartite graph $K_{d_i, d_i}$. Since $d_{3-i} \geq 3$ and $G_{x_{3-i}}$ has a $2$-transitive action on a set of cardinatliy $d_{3-i}$,  we obtain
$$2d_{3-i} \leq d_{3-i} (d_{3-i} - 1) \leq |G_{x_{3-i}}| = |VX_i| = 2d_i.$$
Hence $d_i = \max \{d_1, d_2\} = d_1$. Moreover the equality case $d_1 = d_2$ occurs only if $d_1 = d_2= 3$, which  is impossible since $d_1 \geq 5$. It follows that $d_1 > d_2$, hence $i = 1$. So $X_1$ is the complete bipartite graph $\mathbf K_{d_1, d_1}$. Moreover $G_{x_1}/G_{x_1}^{[1]} \cong F_1$ is almost simple, with socle $= \Alt(d_1)$ if $d_1 \geq 7$.  

If  $d_2 = 3$, we invoke  \cite[Theorem~1.1]{LiLu} and deduce that   $d_1 = 24$. 

If $d_2 \geq 4$ we use the fact that  the order of $G_{x_2}$, which is equal to $|VX_1| = 2d_1$, is subject to Corollary~\ref{cor:TroWeiss}. This provides numerical constraints on $(d_1, d_2)$. Those  can be slightly strengthened by observing that $G_{x_2}$ acts vertex-transitively on the complete bipartite graph $\mathbf K_{d_1,d_1}$, and thus possesses a subgroup of index~$2$. In particular $G_{x_2}$ cannot be $\Alt(d_2)$  or $\Alt(d_2) \times \Alt(d_2-1)$ for all $d_2 \geq 4$.  The required conditions (i)--(vi) follow.

\medskip 
We assume henceforth that  the $S_1$-action on $VX_i$ is   free.  In particular $|S|$ divides $|VX_i | = |G_{x_{3-i}}|$, so that $d_{3-i} \geq 5$ and $F_{3-i} \not \cong \mathbf{C}_5 \rtimes \mathbf{C}_4$.  In particular, if  the action of $S_1$ (hence of $N$) on $VX_{3-i}$ is not free, then $j=1$ and $2$ both satisfy the claim above, and we may thus argue as in the case already treated. 

\medskip 
It remains to consider the case where   all simple factors of $N$ act freely on both $VX_1$ and $VX_2$, since $G$ permutes transitively the simple factors of $N$.  In particular $|S|$ divides $|VX_j| = |G_{x_{3-j}}|$ for $j = 1$ and $2$, hence  $d_1 \geq d_2 \geq 5$ and $F_1 \not \cong \mathbf{C}_5\rtimes \mathbf{C}_4 \not \cong F_2$.  In particular $F_1$ and $F_2$ are both almost simple by the hypothesis (2). 

The rest of the proof aims at reaching a contradiction, thereby showing that the only possible situation is the one we have just described. We distinguish two cases. 

\medskip \noindent
\textbf{Case (1).} \textit{	$d_1\leq 6$}.  
\medskip

Then the only prime divisors of $|G_{x_1}|$ and $|G_{x_2}|$ are $2$, $3$ and $5$. Thus the same holds for $|G|$, whence also $|S|$, by Lemma~\ref{lem:Basic}.  Moreover $\soc(F_1)$ and $\soc(F_2)$ are isomorphic to $A_5 \cong \PSL_2(\FF_5)$ (acting on $5$ or $6$ points) or $A_6$ (acting on $6$ points), see Table~\ref{tab:Small2Trans}. By Corollary~\ref{cor:TroWeiss}, this implies that $3^4$ does not divide  $|G_{x_j}|$ for $j=1$ and $2$. In particular $3^7$ does not divide $|G|$ by Lemma~\ref{lem:Basic}, so that $S  \cong \Alt(5)$ or $\Alt(6)$ by Proposition~\ref{prop:HL}. 
 
We claim that if $N$ acts freely on $VX_1$, then $d_1 = d_2 = 6$. Indeed, if the claim fails, then $N$ acts freely on $VX_1$ and $d_2 = 5$ since $5 \leq d_2 \leq d_1 \leq 6$. The hypothesis (1) implies that $N$ does not act freely on $VX_2$. By Lemma~\ref{lem:NonFree:1}, the stabilizer $G_{x_1}$ has a subgroup $G_{x_1}^+$ of index at most~$2$ whose order divides $|G_{x_2} : N_{x_2}|$. Since $d_2 = 5$, the group $G_{x_2}/G_{x_2}^{[1]}$ is almost simple with socle $\Alt(5)$. Moreover $N_{x_2}/N_{x_2}^{[1]}$ contains the socle of $G_{x_2}/G_{x_2}^{[1]}$ since the $N$-action on $VX_2$ is not free. Using Corollary~\ref{cor:TroWeiss}, we deduce that $|G_{x_2} : N_{x_2}|$ is not divisible by $5$, whereas $5$ divides $|G_{x_1}^+|$. This is a contradiction. 

In view of the claim, we may, upon swapping the indices $1$ and $2$, assume that the $N$-action on $VX_1$ is not free. As observed above, we have $d_j \in \{5, 6\}$ and $\soc(F_j) \in \{\Alt(5), \Alt(6)\}$ for $j =1, 2$ in the case at hand. 
We shall now consider  three cases  successively  namely  $(d_1, \soc(F_1))) = (6, \Alt(6))$, $(6, \Alt(5))$ or $(5, \Alt(5))$. 

If $(d_1, \soc(F_1))) = (6, \Alt(6))$, then $N_{x_1}/N_{x_1}^{[1]}$ contains $\Alt(6)$, so that $S \cong \Alt(6)$. In particular $|N|$, hence also $|G|$, is divisible by $3^{2k}$. We have already seen that $|G|$ is not divisible by $3^7$, so that  $k \leq 3$.  Corollary~\ref{cor:AlmostSimplePointStab} ensures that  $G_{x_1}^{[1]}$ is either trivial, or almost simple with socle isomorphic to $\Alt(5)$. In particular there is no homomorphism $G_{x_1} \to \Sym(3)$ with transitive image. Recall from Corollary~\ref{cor:BuMo} that $N$ has at most two orbits on $VX_1$. In particular $|G : G_{x_1}N| \leq 2$. Since the conjugation action of $G$ permutes transitively the simple factors of $N$, the case $k=3$ is impossible, and we have $k=2$.  
Hence $N = S_1 \times S_2 \cong \Alt(6) \times \Alt(6)$ and we know that $N_{x_1}/N_{x_1}^{[1]}$ is almost simple with socle $\Alt(6)$. Considering the projection of $N_{x_1}$ on the simple factors of $N$, we deduce that image of $N_{x_1}^{[1]}$ under at least one of these projections must be trivial. In other words $N_{x_1}^{[1]}$ is contained in one of the two simple factors of $N$. We have seen above that all simple factors of $N$ act freely on $VX_1$. Therefore   $N_{x_1}^{[1]} = \triv$. Thus $G_{x_1}^{[1]}  \cap N= \triv$, so that $G_{x_1}^{[1]}$ embeds in the quotient group $G/N$. Since $C_G(N) = \triv$ and $N \cong \Alt(6) \times \Alt(6)$, the quotient $G/N$ embeds in $(\Out(\Alt(6)) \times \Out(\Alt(6)) )\rtimes \Sym(2)$, which is a $2$-group. On the other hand, by Corollary~\ref{cor:AlmostSimplePointStab}, the group $G_{x_1}^{[1]}$ is either trivial or almost simple (with socle $\Alt(5)$). We deduce that $G_{x_1}^{[1]} = \triv$.  Therefore we have $N_{x_1} \cong \Alt(6)$ and $|G_{x_1} : N_{x_1}| \leq 2$, so that the $N$-action on $VX_2$ is not free by Lemma~\ref{lem:NonFree:1}. We now distinguish 
$3$~subcases. 


If $(d_2, \soc(F_2)) = (6, \Alt(6))$, then by symmetry we have $N_{x_2} \cong \Alt(6)$, and it then follows from Proposition~\ref{prop:Goursat} that $N_{x_1} \cap N_{x_2}$ is non-trivial. This is absurd since the $G$-action on $VX_1 \times VX_2$ is free. 

If $(d_2, \soc(F_2)) = (6, \Alt(5))$, then $|G_{x_2}|$ is not divisible by $3^2$ in view of Corollary~\ref{cor:TroWeiss}, and we obtain a contradiction since $|N|$, whence also $|G|$, is divisible by $3^4$. 

If $(d_2, \soc(F_2))  = (5, \Alt(5))$, we consider the group $H= NG_{x_1}$, which is of index at most~$2$ in $G$ since the $N$-action on $VX_1$ has at most $2$~orbits. The local action of $H$ on $VX_2$ is $\Alt(5)$ or $\Sym(5)$, so $H_{x_2}/H_{x_2}^{[1]} = \Alt(5)$ or $\Sym(5)$. Moreover $|H_{x_2}| = |N : N_{x_1}| = 2^3.3^2.5$, so that $|H_{x_2}^{[1]}|= 3$ or $6$. On the other hand,  consider a vertex $y_2 \in VX_2$ adjacent to $x_2$. By Theorem~\ref{thm:TrofimovWeiss}(ii), the group $H_{x_2}^{[1]} \cap H_{y_2}^{[1]}$ is a $2$-group. Therefore the natural image of $H_{x_2}^{[1]} $ in $H_{y_2}/ H_{y_2}^{[1]}$ is non-trivial. Moreover it is isomorphic to a  normal subgroup of a point stabilizer in $\Alt(5)$ or $\Sym(5)$. Since the latter groups are $3$-transitive, it follows that the order of $H_{x_2}^{[1]} $ is divisible by $4$, a contradiction.   This finishes the case $(d_1, \soc(F_1)) = (6, \Alt(6))$.

If $(d_1, \soc(F_1))  =(6, \Alt(5))$, then $|G_{x_1}|$ is not divisible by $3^2$ in view of Corollary~\ref{cor:TroWeiss}. It follows that the $N$-action on $VX_2$ cannot be free, since otherwise $|N|$ would divide $|G_{x_1}| = |VX_2|$, so the latter would be divisible by $3^k \geq 3^2$. We may thus assume that $\soc(F_2)  \cong \Alt(5)$, since otherwise $(d_2, \soc(F_2)) = (6, \Alt(6))$ and we may swap $X_1$ and $X_2$ and invoke the case that has already been treated. If $d_2 = 6$, then $|G_{x_2}|$ is not divisible by $3^2$ by Corollary~\ref{cor:TroWeiss}, so that $|G|$ is not divisible by $3^3$. This yields $k=2$. If $d_2 = 5$, then $|G_{x_2}|$ is not divisible by $3^3$ by Corollary~\ref{cor:TroWeiss}, so that $|G|$ is not divisible by $3^4$. Thus $k \leq 3$, but if $k=3$, then $|N|$ is divisible by $3^3$ and $|G/N|$ is divisible by $3$ since $G$ permutes transitively the simple factors of $N$. Since  $|G|$ is not divisible by $3^4$, we obtain $k = 2$ in all cases. If $S \cong \Alt(6)$, then $|N|$ is divisible dy $3^4$, which is impossible. So $S \cong \Alt(5)$ and $N \cong \Alt(5) \times \Alt(5)$. Since the $N$-action on both $VX_1$ and $VX_2$ is non-free, it follows that  $N_{x_j}/N_{x_j}^{[1]}$ contains $\Alt(5)$ for $j=1, 2$. Since the simple factors of $N$ act freely on $VX_1$ and $VX_2$, we have $N_{x_j}^{[1]} = \{1\}$. Using again Proposition~\ref{prop:Goursat}, we deduce that  $N_{x_1} \cap N_{x_2}$ is non-trivial, a contradiction.  

If $(d_1, \soc(F_1)) =(5, \Alt(5))$, then $|G_{x_1}|$ is not divisible by $5^2$. It follows that the $N$-action on $VX_2$ cannot be free, since otherwise $|N|$ would divide $|G_{x_1}| = |VX_2|$, so the latter would be divisible by $5^k \geq 5^2$. Moreover we have   $(d_2, \soc(F_2)) = (5, \Alt(5))$, since $d_1 \geq d_2 \geq 5$.   In particular $|G_{x_2}|$ is not divisible by $5^2$, hence $k=2$. We cannot have $S \cong \Alt(5)$, since otherwise we would get the same contradiction as in the previous paragraph. Thus $S \cong \Alt(6)$. Thus $|G_{x_1}^{[1]}|$ and $|G_{x_2}^{[1]}|$ are both divisible by~$3$ since $|G| = |G_{x_1}||G_{x_2}|$. Thus $|N_{x_1}^{[1]}|$ is divisible by $3$ since otherwise $|G/N|$ would be divisible by~$3$. This is not the case since $C_G(N) = \triv$, so that the quotient $G/N$ embeds in $(\Out(\Alt(6)) \times \Out(\Alt(6)) )\rtimes \Sym(2)$, which is a $2$-group. We now consider the projection of $N_{x_1}$ to each simple factor $S_j$ of $N$. Since $N_{x_1}/N_{x_1}^{[1]}$ contains $\Alt(5)$ and since the only subgroups of $\Alt(6)$ containing a subnormal subgroup isomorphic to $\Alt(5)$ are $\Alt(5)$ and $\Alt(6)$, we deduce that $N_{x_1}^{[1]}$ is contained in one of the two simple factors of $N$. This is impossible,  since all the simple factors of $N$ act freely on $VX_1$. This proves that the case $d_1 \leq 6$ does not occur.

\medskip \noindent
\textbf{Case (2):} \textit{	$d_1\geq  7$}.  
\medskip

We claim that if $N$ acts freely on $VX_1$, then $d_1 = d_2$. Indeed, if the claim fails, then $N$ acts freely on $VX_1$ and $d_1 >d_2$.  The hypothesis (1) implies that $N$ does not act freely on $VX_2$. By Lemma~\ref{lem:NonFree:1}, the stabilizer $G_{x_1}$ has a subgroup $G_{x_1}^+$ of index at most~$2$ whose order divides $|G_{x_2} : N_{x_2}|$. By the discussion directly preceding Case~(1), the group $G_{x_2}/G_{x_2}^{[1]}$ is almost simple. Moreover $N_{x_2}/N_{x_2}^{[1]}$ contains the socle of $G_{x_2}/G_{x_2}^{[1]}$ since the $N$-action on $VX_2$ is not free. Since $d_1 \geq 7$,  we deduce from the hypothesis (2) that $7$ divides $|G_{x_1}^+|$, hence also $|G_{x_2} : N_{x_2}|$. It then follows from Corollary~\ref{cor:TroWeiss} that $d_2 \geq 8$. We may then invoke Corollary~\ref{cor:AlmostSimplePointStab} to establish that $\frac{d_1!} 4$ divides $|G_{x_1}^+|$, and that $|G_{x_2} : N_{x_2}|$ divides $\frac{d_2! (d_2-1)!}{d_2!/2} = 2 (d_2-1)!$. Since   $|G_{x_1}^+|$ divides $|G_{x_2} : N_{x_2}|$, we deduce that $d_1 = d_2 = 8$. The claim follows.

In view of the claim, we may, upon swapping the indices $1$ and $2$, assume that the $N$-action on $VX_1$ is not free.

For   $j \in \{1, 2\}$, if the permutation group $F_j$ has almost simple stabilizers,  then Corollary~\ref{cor:AlmostSimplePointStab} ensures that 
$$|G_{x_j}| \leq d_j! (d_j-1)! \leq d_1! (d_1-1)!.$$
This holds in particular for $j=1$. If the point stabilizers in $F_2$ are not almost simple, then the discussion directly preceding Case~(1)  implies  that either $(d_2, \soc(F_2)) = (5, \Alt(5))$ or $(d_2, \soc(F_2)) = (6, \Alt(5))$. In all cases, we invoke Corollary~\ref{cor:TroWeiss}, which respectively yields the following upper bounds: 
$$|G_{x_2}| \leq 5! 4! 4^4$$
if  $(d_2, \soc(F_2)) = (5, \Alt(5))$,  or
$$|G_{x_2}| \leq 5! 5! 5$$
if $(d_2, \soc(F_2)) = (6, \Alt(5))$. 
In either case, we obtain
$$|G_{x_2}| \leq 7! 6! \leq d_1! (d_1-1)!.$$
This proves that $|G| = |G_{x_1}| |G_{x_2}| \leq d_1!^2 (d_1-1)!^2$. On the other hand we know that $N_{x_1}/N_{x_1}^{[1]}$ contains the socle of $F_1$, which is the alternating group $\Alt(d_1)$ in the case at hand. Considering the projection of $N_{x_1}$ to each of the simple factors of $N$, we infer that $d_1!/2 = |\Alt(d_1) | \leq |S|$. This yields
$$\frac{d_1!^k}{2^k}  \leq |S|^k = |N | \leq |G| \leq d_1!^2 (d_1-1)!^2.$$ 
We deduce that $k \leq 3$. In particular $\Out(N)$ is solvable, so that the $N$-action on $VX_2$ is not free since otherwise $G_{x_2}$ would map injectively in $\Out(N)$ since $C_G(N) = \triv$, contradicting that $G_{x_2}$ has a non-abelian simple subquotient. Moreover,  the group $NG_{x_1}$ has index at most~$2$ in $G$ by Corollary~\ref{cor:BuMo}, and  $G$ permutes transitively the   $k$ simple factors of $N$. Thus, if $k = 3$ then $G_{x_1}$ has a transitive action on a $3$-point set. However, by Corollary~\ref{cor:AlmostSimplePointStab}, the group $G_{x_1}$ does not have any subgroup of index~$3$. Thus  $k=2$. Hence $N = S_1 \times S_2 \cong S \times S$. 

The $N$-action on both $VX_1$ and $VX_2$ is non-free, hence each has at most $2$ orbits. Recall moreover that the  $S_1$- and $S_2$-actions on both $VX_1$ and $VX_2$ are  all free. In particular $|S|$ divides both $|VX_1| = |G_{x_2}|$ and $|VX_2| = |G_{x_1}|$. 

Assume that $G_{x_1}^{[1]}$ is non-trivial. Then it is almost simple with socle $\Alt(d_1-1)$ by Corollary~\ref{cor:AlmostSimplePointStab}. Therefore so is $N \cap G_{x_1}^{[1]} = N_{x_1}^{[1]}$, since $C_G(N)= \triv$   and $\Out(N)$ is solvable.  Since both simple factors of $N$ act freely on $VX_1$, we see that the projection map $N \to S_1$ yields an injective homomorphism of $N_{x_1}$ into $S$. Since $|S|$ divides $|G_{x_1}|$, we obtain that 
$\frac{d_1!} 2 \frac{(d_1-1)!} 2$ divides $|S|$, which in turn divides $d_1! (d_1-1)!$. It follows that the image of $N_{x_1}$ into $S$ has index at most~$4$. Since $S$ is simple, the image of $N_{x_1}$ into $S$ must be surjective, which is absurd since the normal subgroup $N_{x_1}^{[1]}$ is non-trivial. This proves that $G_{x_1}^{[1]} =\triv$. 

Invoking again that $N_{x_1}$ maps injectively to $S$ and that  $|S|$   divides $|G_{x_1}| \in \{d_1!, d_1!/2\}$, we now deduce  that $S \cong \Alt(d_1) \cong N_{x_1}$. Since $N$ has at most $2$ orbits on $VX_1$, we deduce that  $|G_{x_2}| = |VX_1| \in \{|N : N_{x_1}|, 2|N : N_{x_1}|\} = \{d_1!/2, d_1!\}$. In particular $d_2 \geq 7$. We may thus apply the same arguments for $G_{x_2}$ as for $G_{x_1}$ in the previous paragraph to establish that $G_{x_2}^{[1]} =\triv$ and  that $ N_{x_2} \cong  \Alt(d_1) \cong N_{x_1}$. Since $N = S_1 \times S_2 \cong  \Alt(d_1) \times \Alt(d_1)$, we deduce from Proposition~\ref{prop:Goursat} that $N_{x_1} \cap N_{x_2}$ is non-trivial.  Therefore the $G$-action on $VX_1 \times VX_2$ is not free. This final contradiction finishes the proof. 
\end{proof}

\subsection{If $N$ is simple then $X_1$ is a complete graph} 

\begin{lem}\label{lem:N-simple}
Let $(X_1, X_2, G) \in \mathcal F(F_1, F_2)$ satisfy (Min). Assume that:
\begin{enumerate}[(1)]
	\item  There is $i \in \{1, 2\}$ such that $N$ does not act freely on    $VX_i$.
	\item For $j = 1$ and $2$, if $d_j \geq 7$ then   $F_j \geq \Alt(d_j)$.
	\item  $N$ is  simple.
	\item $d_1 \geq d_2$.
\end{enumerate}
Then $X_1$ is the complete graph $\mathbf K_{d_1+1}$, and one of the following conditions holds:
\begin{enumerate}[(i)]
\item $d_2 = 3$, and $d_1 \in \big\{23, 47\big\} $. 

\item $d_2 = 4$, and $ d_1 \in \big\{ 12n -1\mid n \geq 2  \text{ divides }  2^2\cdot 3^5\big\}$.  	

\item $d_2 =  5$,  $F_2 \cong \mathbf{C}_5 \rtimes \mathbf{C}_4$ and  $d_1 \in \big\{ 19, 39, 79\big\}$. 	

\item $d_2 =  5$,  $\soc(F_2) \cong \Alt(5)$ and  $d_1 \in \big\{  60n-1\mid n \text{ divides }  2^8 \cdot 3\big\}$. 	

\item $d_2 = 6$, $\soc(F_2) \cong \Alt(5)$, and $d_1 \in \big\{60n-1 \mid n \text{ divides } 2^3 \cdot 5^2\big\}$.

\item $d_2 \geq 6$, and $d_1 \in \big\{  \frac{d_2!} 2 -1, d_2!-1, \frac{d_2!(d_2-1)!} 4-1, \frac{d_2!(d_2-1)!} 2-1,d_2!(d_2-1)!-1\big\}$. 
\end{enumerate}
\end{lem}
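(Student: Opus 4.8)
The plan is to follow the same overall strategy as in the proof of Lemma~\ref{lem:N-non-simple}, but now exploiting that $N$ is simple, which forces $G$ to be almost simple. First I would observe that, by Lemma~\ref{lem:NonFree:2}, the hypothesis that $N$ acts non-freely on some $VX_i$ gives $\mathrm{C}_G(N) = \triv$; since $N$ is simple and normal in $G$, this means $N \leq G \leq \Aut(N)$, i.e.\ $G$ is almost simple with socle $N$. Lemma~\ref{lem:Basic} then affords the factorization $G = G_{x_1} G_{x_2}$ into two proper subgroups, neither of which contains $N$ (each $G_{x_j}$ acts faithfully and $2$-transitively on $d_j \geq 3$ points, hence is a proper subgroup of $G$, and if it contained $N$ it would be all of $G$ by order reasons together with $G_{x_1}\cap G_{x_2}=\triv$). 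The key point is that the factorization $G = G_{x_1}G_{x_2}$ forces $\pi(G) = \pi(G_{x_1}) \cup \pi(G_{x_2})$, and in fact one checks, using that $G_{x_1}$ and $G_{x_2}$ intersect trivially and $|G| = |G_{x_1}||G_{x_2}|$, that $\pi(G_{x_j}) \supseteq \pi(N)$ for at least one (indeed, as one sees from the orbit counting, for the relevant) $j$.

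Next I would bring in the Trofimov--Weiss structure theory. Upon relabeling so that $i$ is chosen as in the Claim of the previous proof (there is $j$ with $d_j \geq 5$, $F_j \not\cong \mathbf{C}_5\rtimes\mathbf{C}_4$, and $N$ non-free on $VX_j$ — the same argument as in Lemma~\ref{lem:N-non-simple} applies verbatim since it never used that $N$ was non-simple), we get that $N_{x_1}/N_{x_1}^{[1]}$ contains $\soc(F_1)$, and since $\soc(F_1) \geq \Alt(d_1)$ when $d_1 \geq 7$, the group $M\cap N := N_{x_1}$ has a composition factor isomorphic to $\Alt(d_1)$ (treating small $d_1$ separately via Table~\ref{tab:Small2Trans}). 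We are then precisely in the situation of Corollary~\ref{cor:LPS} applied to the pair $(G, M) = (G, N G_{x_2})$ or to $(N, N_{x_1})$: either $N = \Alt(c)$ with $\Alt(k) \lhd N_{x_1} \leq \Sym(k)\times\Sym(c-k)$ and $k$ at least every prime $\leq c$, or $(N, N_{x_1})$ is one of the seven exceptional pairs in Table~\ref{tab:LPS}. The alternating case is the main branch: there $|VX_1| = |G_{x_2}|$ divides $2|N : N_{x_1}|$ (as $N$ has at most two orbits on $VX_1$ by Corollary~\ref{cor:BuMo}), and writing out $|N : N_{x_1}| = \binom{c}{k}$ together with the constraint that $X_1$, being vertex-transitive under the $2$-transitive group $G_{x_2}$, is the complete graph $\mathbf{K}_{d_1+1}$ (so $|VX_1| = d_1+1$), one reads off $d_1 + 1 \in \{\binom{c}{k}, 2\binom{c}{k}\}$. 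Matching this against the value of $|G_{x_2}|$ permitted by Corollary~\ref{cor:TroWeiss} pins down $k$ and $c$ in terms of $d_2$, giving the list $d_1 \in \{\frac{d_2!}{2}-1, d_2!-1, \frac{d_2!(d_2-1)!}{4}-1, \dots\}$ of condition~(vi), and the small-$d_2$ refinements (i)--(v) come from running the same computation with the explicit subgroup data for $d_2 \in \{3,4,5,6\}$ and invoking \cite[Theorem~1.1]{LiLu} where needed to rule out the complete-bipartite alternative.

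The remaining work — and the step I expect to be the main obstacle — is disposing of the seven exceptional pairs in Table~\ref{tab:LPS}. For each of $N \in \{\Alt(6), \mathrm U_3(5), \mathrm U_4(2), \mathrm U_4(3), \mathrm{PSp}_4(7), \mathrm{Sp}_6(2), \mathrm P\Omega_8^+(2)\}$ one must check, using the known value of $|N|$ and the structure of $N_{x_1}$ listed there, that no triple $(X_1, X_2, G)$ in $\mathcal F(F_1,F_2)$ satisfying (Min) can actually arise: typically $|G_{x_1}^{[1]}|$ is forced to be too large (contradicting Corollary~\ref{cor:AlmostSimplePointStab}, which caps $G_{x_1}^{[1]}$ at a point stabilizer of $F_1$), or the $2$-part of $|G_{x_2}^{[1]}|$ is incompatible with the $p$-group statement of Theorem~\ref{thm:TrofimovWeiss}(ii) (as in the $(6,\Alt(5))$ argument in the previous lemma), or the required factorization $G = G_{x_1}G_{x_2}$ simply does not exist with subgroups of the permitted orders. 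Case~(7), the $\mathrm P\Omega_8^+(2)$ case with its triality-related factorizations through the maximal parabolics $P_1, P_3, P_4$, is the delicate one — this is exactly the point for which the author thanks M.~Giudici in the acknowledgements — and will require a hands-on analysis of the relevant factorization of $\Omega_8^+(2)$ to show the orders cannot be reconciled with a locally $2$-transitive action of degree $d_1 \geq d_2 \geq 3$. Once all seven are eliminated, only the alternating branch survives, completing the proof.
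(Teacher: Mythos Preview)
Your outline captures the broad architecture --- $G$ is almost simple with socle $N$, apply Corollary~\ref{cor:LPS} with $M = G_{x_1}$, dispatch the seven exceptional rows of Table~\ref{tab:LPS}, and analyse the alternating branch --- and that is indeed the paper's strategy. But there is a genuine gap in your treatment of the alternating branch, which is in fact the heart of the lemma rather than a bookkeeping afterthought.

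You assert that ``$X_1$, being vertex-transitive under the $2$-transitive group $G_{x_2}$, is the complete graph $\mathbf{K}_{d_1+1}$''. This is circular. The group $G_{x_2}$ acts \emph{regularly} on $VX_1$ (Lemma~\ref{lem:Basic}), not $2$-transitively; its abstract $2$-transitive action is on $E(x_2)$, a set of size $d_2$, and says nothing about $VX_1$. Nor does the $2$-transitivity of the local action $F_1$ force the $G$-action on $VX_1$ to be $2$-transitive. The conclusion $X_1 \cong \mathbf K_{d_1+1}$ is equivalent to $|VX_1| = d_1+1$, i.e.\ to $c = d_1+1$ in the notation $N = \Alt(c)$, and this is precisely what must be \emph{proved}. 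Your formula $|N:N_{x_1}| = \binom{c}{k}$ is also not correct in general (it depends on where $N_{x_1}$ sits between $\Alt(k)$ and $\Sym(k)\times\Sym(c-k)$), and the matching you propose against Corollary~\ref{cor:TroWeiss} does not by itself pin down $c$.

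In the paper's proof, once one is in the alternating branch $N = \Alt(c)$ with $\Alt(k) \lhd N_{x_1}$, one first shows (via a prime in $(\tfrac{c+1}{2}, c]$ and Corollary~\ref{cor:AlmostSimplePointStab}) that $G_{x_1}^{[1]} = \triv$, so $G_{x_1} \in \{\Alt(d_1), \Sym(d_1)\}$ and $k = d_1$. Setting $t = c-k$, one then invokes the Wiegold--Williamson classification of factorizations of $\Alt(c)$ and $\Sym(c)$ \cite{WiWi} to deduce that $G_{x_2}$ (or $N_{x_2}$) acts sharply $t$-transitively on $c$ points. The substantial work you omit is a case-by-case argument over $d_2 \in \{3,4,5,6\}$ and $d_2 \geq 6$ showing that $t \geq 2$ is impossible: one confronts the sharply $t$-transitive structure of $G_{x_2}$ with Corollary~\ref{cor:TroWeiss} and Theorem~\ref{thm:TrofimovWeiss}, and one sub-case ($d_2=6$, $c=10$, $t=3$ via $\Alt(6) \cong \PSL_2(\FF_9)$) needs a separate combinatorial argument. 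Only once $t=1$ is established does one get $c = d_1+1$, hence $X_1 = \mathbf K_{d_1+1}$ and $|G_{x_2}| = d_1+1$, from which the lists (i)--(vi) follow via Corollary~\ref{cor:TroWeiss} (with \cite{LiLu} for $d_2=3$ and a small computer check to exclude $(d_1,d_2)=(11,4)$). Your identification of Case~(7) as delicate is correct, but the $t=1$ analysis is at least as much work and cannot be skipped.
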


\begin{proof}
We know that $N$ is non-abelian and  that $d_1 \geq 5$   by Lemma~\ref{lem:NonFree:2}. That lemma ensures that $\mathrm C_G(N) = \{1\}$, so that $G$ is almost simple with socle $N$. 

If $d_1 = 5$ and $F_1 \cong \mathbf{C}_5 \rtimes \mathbf{C}_4$, then $d_2 = 5$ by Lemma~\ref{lem:C5C4}. In that case  $F_2 \not \cong \mathbf{C}_5 \rtimes \mathbf{C}_4$ since otherwise $G$ would be a $\{2, 5\}$-group by Lemma~\ref{lem:Basic}, hence solvable, a contradiction. Therefore, upon replacing  $(X_1, X_2, G)$ by $(X_2, X_1, G)$ in the case $d_1= d_2= 5$, we may assume without loss of generality that $F_1$ is almost simple.  
In particular $G_{x_1}$ is not solvable, hence $N \cap G_{x_1} \neq \triv$ since $C_G(N) = \triv$ and $\Out(N)$ is solvable. Thus $N$ does not act freely on $VX_1$.  

Since $d_1 \geq d_2$,  the hypothesis (2) implies that $\pi(G_{x_2}) \subseteq \pi(G_{x_1})$ with the notation of Section~\ref{sec:LPS},  so that $\pi(G) = \pi(G_{x_1})$. Moreover $N$ is not contained in $G_{x_1}$, since $G$ acts faithfully on $X_1$. Thus all the hypotheses of Corollary~\ref{cor:LPS} are satisfied. 

We shall now consider successively the seven exceptional cases of Corollary~\ref{cor:LPS} displayed in Table~\ref{tab:LPS} and show that each of them does not occur. An observation that we shall used repeatedly is the following. Table~\ref{tab:LPS} provides us with the possible values of the index $|N: N_{x_1}|$. We know moreover that $N$ has at most two orbits on $VX_1$ (by Corollary~\ref{cor:BuMo}) and the $G_{x_2}$ acts regularly on $VX_1$ (by Lemma~\ref{lem:Basic}). Thus $|VX_1| = |G_{x_2} |$ equals $|N: N_{x_1}|$ or $2|N: N_{x_1}|$. This can be confronted with Corollary~\ref{cor:TroWeiss}, which provides independent constraints that the number $|G_{x_2} |$ must satisfy. 

The numbering of the cases below is chosen according to the numbering of the rows in Table~\ref{tab:LPS}. 

\medskip \noindent
\textbf{Case (1).} \textit{$N= \Alt(6)$ and $N_{x_1} = \PSL_2(\FF_5)$}. 
\medskip

By Corollary~\ref{cor:LPS}, we have $|N: N_{x_1}| = 6$. Hence  $|G_{x_2}| = |VX_1| \in\{ 6, 12\}$, so that $d_2 \leq 4$. From \cite[Theorem~1.1]{LiLu}, we deduce that $d_2 \neq 3$. Thus $|VX_1| = 12$,  and $d_2 = 4$. Hence $G_{x_2} \cong \Alt(4)$, so $G_{x_2}$ does not have any subgroup of index~$2$. But $N$ acts with two orbits on $VX_1$, so that $NG_{x_1}$ is an index~$2$ subgroup of $G$, and $G_{x_2} \cap NG_{x_1}$ is an index~$2$ subgroup of $G_{x_2}$ by Lemma~\ref{lem:Basic}. This is a contradiction. 

\medskip \noindent
\textbf{Case (2).} \textit{$N= \mathrm U_3(5)$ and $N_{x_1} =\Alt(7)$}. 
\medskip

By Corollary~\ref{cor:LPS}, we have $|N : N_{x_1}| = 2 \cdot 5^2$. Hence  $|G_{x_2}| = |VX_1| \in\{ 50, 100\}$. This is impossible by  Corollary~\ref{cor:TroWeiss}.

\medskip \noindent
\textbf{Case (3).} \textit{$N= \mathrm U_4(2)$  and $N_{x_1} \leq 2^4.\Alt(5)$ or $N_{x_1} \leq \Sym(6)$}. 
\medskip

Then the only primes dividing $|N : N_{x_1}| $ are $2$ and $3$, so that $G_{x_2}$ is a $\{2, 3\}$-group. In particular it is solvable, and $d_2\in \{3, 4\}$. We may thus invoke \cite[Theorem~1.1]{LiXia}; it follows that the triple $(G, G_{x_2}, G_{x_1})$ must be as in row~10 or~11 of \cite[Table~1.2]{LiXia}. In the former case we have $N_{x_1} = 2^4.\Alt(5)$, so that $|G_{x_2}| = 27$ or $54$. This is impossible by Corollary~\ref{cor:TroWeiss}. Thus  the triple $(G, G_{x_2}, G_{x_1})$ is as in row~11 of \cite[Table~1.2]{LiXia}, and $N_{x_1} = \Alt(5), \Sym(5), \Alt(6)$ or $\Sym(6)$.  If $N$ is transitive on $VX_1$, then the hypotheses of \cite[Lemma~8.30]{LiXia} are satisfied and we get a contradiction. Thus $|G:NG_{x_1}|=2$. Since $\Out(N)$ is of order~$2$, we deduce that $N = NG_{x_1}$. In particular $G_{x_1} \leq N$ and $N_{x_2}$ is of index~$2$ in $G_{x_2}$. 
Moreover, the information provided  by   \cite[Table~1.2]{LiXia} ensures that $N_{x_2}$ is a subgroup of $3_+^{1+2}: 2.\Alt(4)$, which is a parabolic subgroup of $\mathrm{PSp}_4(3) \cong \mathrm U_4(2)$.  Observe that the natural action of $\Alt(4)$ on the Heisenberg group $3_+^{1+2}$ does not preserve any subgroup of order~$3^2$; therefore  the largest power of~$3$ dividing $|N_{x_2}|$ (and hence also $G_{x_2}$) cannot be $3^3$. On the other hand, we have $|N_{x_2}| = |N:N_{x_1}|$. We deduce that $N_{x_1} = G_{x_1}$ can neither be $\Alt(5)$ nor $\Sym(5)$, so that it is $\Alt(6)$ or $\Sym(6)$. The latter possibility is excluded because $\Sym(6)$ is maximal  in $N$, and the factorization $N = N_{x_1} N_{x_2}$ would then contradict \cite[Theorem~1.1]{LPS2}. Thus $N_{x_1} = G_{x_1} = \Alt(6)$. It follows that $|N_{x_2}| = 2^3\cdot 3^2$, hence $N_{x_2} \cong 3:2.\Alt(4)$. It follows that $N$ is locally $2$-transitive on $X_2$ with local action at every vertex isomorphic to $\Alt(4)$ by Lemma~\ref{lem:BuMo:LocallyQuasiPrimitive}. It follows that the  point stabilizers in $N_{v}/N_{v}^{[1]}$ are cyclic of order~$3$ for all $v\in VX_2$, so that $N_{x_2}^{[1]}$ is a $3$-group.  This contradicts that $N_{x_2} \cong 3:2.\Alt(4)$.

\medskip \noindent
\textbf{Case (4).} \textit{$N= \mathrm{U}_4(3)$ and $N_{x_1} = \Alt(7)$}.  
\medskip

Then $|N : N_{x_1}| = 2^4 \cdot 3^4$. Thus $G_{x_2}$ is a $\{2, 3\}$-group, hence solvable.    We may thus invoke \cite[Theorem~1.1]{LiXia}, which yields a contradiction. 

\medskip \noindent
\textbf{Case (5).} \textit{$N= \mathrm{PSp}_4(7)$ and $N_{x_1} = \Alt(7)$}.  
\medskip

Then $|N : N_{x_1}| = 2^5 \cdot 5 \cdot 7^3$, so that $|G_{x_2}|  \in \{|N : N_{x_1}|, 2|N : N_{x_1}|\} $ violates Corollary~\ref{cor:TroWeiss}.

\medskip \noindent
\textbf{Case (6).} \textit{$N= \mathrm{Sp}_6(2)$ and $N_{x_1} = \Alt(7), \Sym(7), \Alt(8)$ or $\Sym(8)$}. 
\medskip

 Here again   $G_{x_2}$ is a $\{2, 3\}$-group, hence solvable.  Since $\Out(N)$ is trivial in this case, we have $G=N$ so that the hypotheses of   \cite[Lemma~8.30]{LiXia} are satisfied. The latter result yields a contradiction. 

\medskip \noindent
\textbf{Case (7).} \textit{$N= \mathrm{P}\Omega^+_8(2)$ and $N_{x_1} \leq P_1, P_3, P_4$ or $N_{x_1} \leq \Alt(9)$}. 
\medskip

Then $G_{x_2}$ is a $\{2, 3, 5\}$-group whose order is divisible by~$30$, so that $d_2  \in \{5, 6\}$ and $F_2 \not \cong \mathbf{C}_5 \rtimes \mathbf{C}_4$. 

Let us first consider the case where $N_{x_1}$ is contained in a parabolic subgroup $P_k$. Then the socle of $N_{x_1}/N_{x_1}^{[1]}$ must be isomorphic to the Levi factor of $P_k$, which is $\mathrm{SL}_4(\FF_2) \cong \Alt(8)$. It follows that $|N:N_{x_1}|$ is divisible by $3^3\cdot 5$, but $|N:N_{x_1}|$ is not divisible by $25$. This contradicts Corollary~\ref{cor:TroWeiss} for the $|G_{x_2}|$. 

We now assume that $N_{x_1} \leq \Alt(9)$. If $N_{x_1}$ is a proper subgroup of $\Alt(9)$, the same numerical considerations as in the case   $ N_{x_1} \leq P_k$ yield a contradiction. It follows that 
$N_{x_1} = \Alt(9)$, so $|G_{x_2}| = 2^a\cdot 3 \cdot 5$ with $a=6$ or $7$. Using Corollary~\ref{cor:TroWeiss}, we infer that $d_2= 5$, so $F_2 = \Alt(5)$ or $\Sym(5)$ because $F_2 \not \cong \mathbf C_5 \rtimes \mathbf C_4$.  Notice that $N_{x_1}$ is a maximal subgroup of $N$ in the case at hand. Therefore $G_{x_1}$ is a maximal subgroup of the almost simple group $NG_{x_1}$. Denoting $G_{x_2}^+ = G_{x_2} \cap NG_{x_1}$, the factorization $G = G_{x_1} G_{x_2}$ yields a factorization $NG_{x_1} = G_{x_1} G_{x_2}^+$ since $N$ has at most two orbits on $VX_1$. We may then invoke \cite[Theorem~1.1]{LPS2}, which ensures that $G_{x_2}^+ = 2^4.\Alt(5)$. In particular $|G_{x_2}| /60 = 2^4$ or $2^5$.  Recall that $\Alt(5) \cong \Omega_4^-(2)$. A more precise look at the structure of $G_{x_2}^+$ afforded by that factorization reveals that  $G_{x_2}^+ \cong \FF_2^4\rtimes \Omega_4^-(2)$, where $\FF_2^4$ is the standard $\Omega_4^-(2)$-module:  that information can be extracted from Examples (h) and (i) and Lemma~10.7 in \cite{Baumeister}.

On the other hand, Corollary~\ref{cor:TroWeiss} yields $d_2 = 5$, and we may invoke \cite[Theorem~(1.2)]{Weiss79} to elucidate the structure of $G_{x_2}$. Given the possible values for the order of $G_{x_2}$, we must have $s=4$ in the notation of \cite[Theorem~(1.2)]{Weiss79}, so that the latter result yields an embedding of $G_{x_2}$ as a subgroup of 
$\mathbf F_4^2 \rtimes \mathrm P\Gamma \mathrm L_2(\mathbf F_4)$ containing  $\mathbf F_4^2 \rtimes \mathrm{PSL}_2(\mathbf F_4)$, where the action of $\Alt(5) \cong \mathrm{PSL}_2(\mathbf F_4)$ on $\mathbf F_4^2$ is the standard one. That embedding must map $G_{x_2}^+ $ isomorphically onto $\mathbf F_4^2 \rtimes \mathrm{PSL}_2(\mathbf F_4)$. This is a contradiction, because the groups   $\FF_2^4\rtimes \Omega_4^-(2)$ and $\mathbf F_4^2 \rtimes \mathrm{PSL}_2(\mathbf F_4)$ are not isomorphic (the two corresponding modules of $\Alt(5) \cong \Omega_4^-(2) \cong \mathrm{PSL}_2(\mathbf F_4)$ are not isomorphic). 

\medskip
Since all the seven exceptional cases of Corollary~\ref{cor:LPS} are excluded, we deduce from the latter result that $N \cong \Alt(c)$ and $\Alt(k) \lhd N_{x_1} \leq \Sym(k) \times \Sym(c-k)$, where $k \leq c$ are integers such that $p \leq k$ for every prime $p \leq c$. Moreover $c\geq 5$ because  $d_1 \geq 5$, and the case $c=5$ is excluded since it would imply that $N \leq G_{x_1}$. 

If $c=6$, then $N_{x_1} = \Alt(5)$ and $|G_{x_2}| = 6$ or $12$,  and we obtain a contradiction with the same arguments as in Case~(1) above. We assume henceforth that 
$$c \geq 7.$$ 
Hence $G = \Alt(c)$ or $\Sym(c)$. Using the existence of a prime $p$ with $ \frac {c+1} 2< p \leq c$ (see \cite[1.1]{WiWi} for a more general fact), we  deduce from Corollary~\ref{cor:AlmostSimplePointStab} that $G_{x_1}^{[1]} =\triv$, so that $G_{x_1} = \Alt(d_1)$ or $\Sym(d_1)$. 

We shall now use the fact that  the factorization $G = G_{x_1} G_{x_2}$ must be described by the main results from \cite{WiWi}. 

If $G = \Alt(c)$, we invoke \cite[Theorem~A]{WiWi}. Case III from \cite[Theorem~A]{WiWi} is impossible since $G_{x_1} = \Alt(d_1)$ or $\Sym(d_1)$.  Case II is also impossible in view of our hypotheses on $F_2$ (special care is required in view of the isomorphism $\Alt(6) \cong \PSL_2(\FF_9)$; however $\PSL_2(\FF_q)$ appears in \cite[Theorem~A, Case~II]{WiWi} only for prime powers $q$ congruent to $3$ modulo~$4$). Thus we are in Case I of  \cite[Theorem~A]{WiWi}. This yields  $G_{x_1} = \Alt(d_1) = \Alt(k)$ and $G_{x_2}$ acts sharply $t$-transitively on $\{1, \dots, c\}$, where $t = c-k$. 

If $G = \Sym(c)$, we invoke \cite[Theorem~S]{WiWi}. Using similar arguments, we obtain $d_1=k$ and either $G_{x_2}$ or its index~$2$ subgroup $N_{x_2}$ acts sharply $t$-transitively on $\{1, \dots, c\}$, where $t = c-k$. 

\medskip
We next claim that $t=1$. In order to establish this, we assume that $t \geq 2$ and discuss the value of $d_2$. We shall repeatedly use the fact that  a sharply $t$-transitive group on a set of cardinality $c$ is of order $c\cdot (c-1) \dots (c-t+1)$. 

If $d_2 = 3$, then Corollary~\ref{cor:TroWeiss} yields $c =3$ or $4$, which is absurd. 

If $d_2 = 4$, then Corollary~\ref{cor:TroWeiss} yields $c =9$ and $t=2$. It then follows that $G_{x_2}$ or $N_{x_2}$ is the affine group $\FF_9 \rtimes \FF_9^*$, which is absurd since $G_{x_2}/G_{x_2}^{[1]}$ is $\Alt(4)$ or $\Sym(4)$. 

If $d_2 = 5$, then $F_2 \not \cong \mathbf{C}_5 \rtimes \mathbf{C}_4$ since $c \geq 7$. Thus $\soc(F_2) \cong \Alt(5)$ and $G_{x_2}$ is a $\{2, 3,  5\}$-group.
Assume now that $G_{x_2}^{[1]} \neq \{1\}$. It then follows from  Theorem~\ref{thm:TrofimovWeiss} that $G_{x_2}^{[1]} $ has a non-trivial normal $2$-subgroup. Therefore the same holds for $G_{x_2}$. Since $G_{x_2}$ is $2$-transitive on $\{1, \dots, c\}$, it follows that $c$ is a power of~$2$. In view of \cite[Table~7.3]{Cameron}, we must have $c= 16$ since $G_{x_2}/G_{x_2}^{[1]} $ is isomorphic to $\Alt(5)$ or $\Sym(5)$. We deduce that $t=2$ (since otherwise $|G_{x_2}|$ would be divisible by~$7$), and we get a contradiction since the only sharply $2$-transitive groups on $16$ points are solvable. Thus $G_{x_2}^{[1]} = \triv$ and $G_{x_2} \cong \Alt(5)$ or $\Sym(5)$. Neither of these two groups has a  $t$-transitive action on a set of $c\geq 7$ points. 

If $d_2 = 6$ and $\soc(F_2)= \PSL_2(\FF_5) \cong \Alt(5)$, then $G_{x_2}$ is a again a $\{2, 3,  5\}$-group.  If $G_{x_2}^{[1]}\neq \triv$ then Theorem~\ref{thm:TrofimovWeiss}  ensures that $O_5(G_{x_2}^{[1]})$ is of order~$5$ or $25$. Hence $O_5(G_{x_2})$ is also of order $5$ or $25$. Since $G_{x_2}$ is $t$-transitive on $\{1, \dots, c\}$ and $c \geq 7$, we obtain  $c = 25=|O_5(G_{x_2})|$. Moreover $t=2$ since otherwise $|G_{x_2}|$ would be divisible by $23$.  Therefore $|G_{x_2}/O_5(G_{x_2})|= 24$, which is absurd since  $O_5(G_{x_2}) \leq G_{x_2}^{[1]}$. This contradiction shows that $G_{x_2}^{[1]}= \triv$, so that  $G_{x_2} \cong \Alt(5)$ or $\Sym(5)$.  As before, we arrive at a contradiction since neither of these two groups has a  $t$-transitive action on a set of $c\geq 7$ points. 

If $d_2 \geq 6$ and $\soc(F_2)=\Alt(d_2)$, then $G_{x_2}^{[1]}$ is either trivial or almost simple with socle $\Alt(d_2-1)$ by Corollary~\ref{cor:AlmostSimplePointStab}. In the latter case $G_{x_2}$ has two commuting normal subgroups of order $\frac{d_2!} 2$ and $\frac{(d_2-1)!} 2$ respectively. This prevents $G_{x_2}$  from admitting any faithful $2$-transitive action (since both normal subgroups would have to act freely and transitively, contradicting the fact that they have different orders). Hence $G_{x_2}^{[1]} = \triv$, so $G_{x_2}= \Alt(d_2)$ or $\Sym(d_2)$. In view of \cite[Table~7.4]{Cameron}, the only $2$-transitive action of the latter is the natural action on $d_2$ points, unless $d_2 = 6$, in which case there is a $2$-transitive action on $10$ points via the exceptional isomorphism $\Alt(6) \cong \PSL_2(\FF_9)$. In that case we must have $G_{x_2} = \Sym(6)$, $c=10$ and $t=3$, so $d_1 = c-t = 7$ and  $d_2= 6$. 

In order to exclude that case, we observe that by Lemma~\ref{lem:Basic}, the $7$-regular graph $X_1$ is a Cayley graph of $G_{x_2}$. The corresponding generating set of $G_{x_2}$ must thus contain an involution $\tau$ (because $7$ is odd) that maps $x_1$  to a neighbouring vertex $y_1$. Thus $\tau$ normalizes $G_{x_1, y_1}$. Notice that $G_{x_1, y_1} \cong \Alt(6)$ or $\Sym(6)$ since $G_{x_1} \cong \Alt(7)$ or $\Sym(7)$. Moreover $\langle G_{x_1} \cup \{\tau\} \rangle$  is transitive on $VX_1$, and is thus the whole group $G$. Consider that $G_{x_1}$-action on $\{1, \dots, 10\}$ given through the isomorphism $G \cong \Alt(10)$ or $\Sym(10)$. Upon reordering we may assume that the largest orbit of $G_{x_1}$ is $\{1, \dots, 7\}$ and that $G_{x_1, y_1}$ fixes the point $1$. Since we also know that $G_{x_1} \cong \Alt(7)$ or $\Sym(7)$, we deduce from \cite[Theorems~A and S]{WiWi} that $G_{x_1}$ acts trivially on $\{8,9,10\}$. Since $\tau$ normalizes $G_{x_1, y_1}$ which is isomorphic to $\Alt(6)$ or $\Sym(6)$, it must stabilize the set $\{2, \dots, 7\}$. Thus, the set $\{8, 9, 10\} \setminus \{\tau(1)\}$, which is of size $2$ or $3$, is invariant under both $G_{x_1}$ and $\tau$. This contradicts the fact that $G =\langle G_{x_1} \cup \{\tau\} \rangle$.  

\medskip
This finally shows that $t=1$. Thus $G_{x_1} = \Alt(d_1)$ or $\Sym(d_1)$ and $G = \Alt(d_1+1)$ or $\Sym(d_1+1)$, and the group $G_{x_2}$ acts regularly on a set of cardinality $d_1+1$, so $ |VX_1|= |G_{x_2}| = d_1 + 1$. It follows that $X_1$ is the complete graph $\mathbf K_{d_1 +1}$.   The numerical constraints satisfied by the pair $(d_1, d_2)$ follow from the fact that the order of $G_{x_2}$  is subject to Corollary~\ref{cor:TroWeiss}.  Furthermore, in case $d_2 = 3$, the more precise conclusion that $d_1 \in \{23, 47\}$ follows from \cite{LiLu}. 

It finally remains to exclude the case $(d_1, d_2) = (11,4)$.   In that case $G_{x_2} \cong \Alt(4)$ and $G = \Sym(12)$ or $\Alt(12)$, and $G_{x_1} = \Sym(11)$ or $\Alt(11)$. For such a triple $(G, G_{x_1}, G_{x_2})$, we deduce from Lemma~\ref{lem:Basic} that there   exists an element $g \in G_{x_1}$ such that:
\begin{itemize}
\item  $g^{-1} \in G_{x_2} g G_{x_2}$,
\item  $|G_{x_2} \backslash G_{x_2} g G_{x_2}| = 4$, and
\item  $G =  \langle \{g\} \cup  G_{x_2} \rangle$.
\end{itemize}
Using GAP, we enumerated all elements of $\Sym(11)$ and checked that none of them satisfies all of these three conditions. 
\end{proof}	

\subsection{Proofs of Theorems~\ref{thm:MainFinite} and~\ref{thm:Main}}\label{sec:proofs}

We are now ready to finish the proofs of   the main  results of this paper.

\begin{proof}[Proof of Theorem~\ref{thm:MainFinite}]
Retain the notation introduced in Section~\ref{sec:not}. Under the hypotheses of Theorem~\ref{thm:MainFinite}, we have $d_1 \geq d_2$ and the permutation group $F_i \leq \Sym(d_i)$ is $2$-transitive. Moreover $F_i$ contains $\Alt(d_i)$ if $d_i \geq 7$.  We need to show that if the set $\mathcal F(F_1,F_2)$ is non-empty, then $(d_1, d_2)$ satisfy the constraints listed in the statement of the theorem. 

Assume that $\mathcal F(F_1,F_2)$ is non-empty. We may then choose $(X_1, X_2, G) \in \mathcal F(F_1,F_2)$ satisfying (Min). Let   $N$ be a minimal normal subgroup of $G$. Then $N$ does not act freely on both $X_1$ and $X_2$ by Lemma~\ref{lem:Min}. Moreover there is $i \in \{1, 2\}$ such that $N$ does not act freely on $VX_i$ by Lemma~\ref{lem:DoublyFree:2cases}.   If $N$ is simple, then Lemma~\ref{lem:N-simple} applies, while if $N$ is not simple, we invoke Lemma~\ref{lem:N-non-simple}. In either case the required conclusion follows.  
\end{proof}

\begin{proof}[Proof of Theorem~\ref{thm:Main}]
Let $\Gamma \leq \Aut(T_1)\times \Aut(T_2)$ and assume that $\Gamma$ is reducible. We must show that $(d_1, d_2)$ satisfies the required constraints. 
	
For $i=1, 2$, let    $K_i$ be the projection on $\Aut(T_i)$ of the kernel of the $\Gamma$-action on $T_{3-i}$. Then $K_i$ does not contain any edge inversion by Lemma~\ref{lem:no-inversion}. We may therefore invoke Proposition~\ref{prop:Infinite-Finite}.  Since $\Gamma$ is reducible, it follows that the quotient group $\Gamma/K_1 \times K_2$ is finite, and so is the quotient graph $X_i = K_i \backslash T_i$. The   conclusion is now straightforward from Theorem~\ref{thm:MainFinite}. 
\end{proof}

\section{The just-infinite property}

In this final section, we assemble the ingredients needed to establish Corollary~\ref{cor:HJI}. 

We recall that a locally compact group is called \textbf{topologically simple} if its only closed normal subgroups are the trivial ones. 

The following fundamental result of Bader--Shalom generalizes a result of Burger--Mozes \cite[Theorem~4.1]{BuMo2}  concerning certain lattices in products of trees. Although we shall invoke the result in the context of lattices in products of trees, we do need the more general version of Bader--Shalom, whose hypotheses on the structure of the ambient group are more flexible. 

\begin{thm}[{Bader--Shalom \cite{BaSha}}] \label{thm:BaSha}
Let $G_1, G_2$ be compactly generated locally compact groups and $\Gamma \leq G_1 \times G_2$ be a cocompact lattice whose projection to $G_1$ and $G_2$ has dense image. Assume that for $i=1$ and $2$, the intersection $M_i$ of all non-identity closed normal subgroups of $G_i$ is topologically simple and that the quotient $G_i/M_i$ is compact. Then $\Gamma$ is hereditarily just-infinite. 
\end{thm}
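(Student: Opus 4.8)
This is the Bader--Shalom theorem, which extends the normal-subgroup theorem of Burger--Mozes \cite[Theorem~4.1]{BuMo2} beyond the tree setting; I will sketch the strategy of its proof. A preliminary remark: a topologically simple abelian lcsc group is finite cyclic of prime order (hence compact), so the hypotheses force $M_i$ to be non-abelian and center-free, and if $G_i$ were compact then $\Gamma$ would be finite; we may therefore assume $\Gamma$ is infinite and the $G_i$ are non-compact (and, in the situations of interest, non-discrete). Note also that $M_i$ is the unique minimal non-trivial closed normal subgroup of $G_i$: every non-trivial closed normal subgroup of $G_i$ contains $M_i$. The first step is to reduce "hereditarily just-infinite" to the assertion that \emph{every} cocompact lattice in a product satisfying the hypotheses is just-infinite, i.e. all its non-trivial normal subgroups have finite index. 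For this one checks that the hypotheses pass to finite-index subgroups: if $\Lambda \le \Gamma$ has finite index, then $H_i := \overline{p_i(\Lambda)}$ is an open finite-index subgroup of $G_i$, hence contains $M_i$ (an infinite topologically simple group has no proper finite-index subgroup, since the core of such a subgroup would be a non-trivial closed normal subgroup of finite index), so $H_i/M_i$ is compact and $\Lambda$ is a cocompact lattice in $H_1 \times H_2$ with dense projections. Thus it suffices to show that $\Gamma$ itself is just-infinite.

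Let $N \trianglelefteq \Gamma$ be non-trivial. Since $p_i(\Gamma)$ is dense, the closure $\overline{p_i(N)}$ is a closed normal subgroup of $G_i$, hence is trivial or contains $M_i$. If $p_i(N)$ were trivial for some $i$, then $N$ would be a discrete normal subgroup of $\Gamma$ contained in the single factor $G_{3-i}$, and the standard argument (using density of $p_{3-i}(\Gamma)$ together with the fact that $M_{3-i}$ is non-discrete and center-free) would force $N$ to be finite, hence trivial: indeed a finite normal subgroup $N$ has $p_i(N)$ finite normal in $G_i$, hence trivial for $i=1,2$ (it cannot contain the infinite group $M_i$), so $N \subseteq \ker p_1 \cap \ker p_2 = \{1\}$. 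Hence $\overline{p_i(N)} \supseteq M_i$ for $i = 1$ and $2$.

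Now comes the boundary-theoretic core. Choose an admissible probability measure $\mu$ on $\Gamma$ compatible with the product decomposition, so that the Furstenberg--Poisson boundary of $(\Gamma, \mu)$ is $B = B_1 \times B_2$, where $B_i$ is a $(G_i, \mu_i)$-boundary. Since $N$ is normal, the $N$-invariant measurable sets of $B$ form a $\Gamma$-invariant $\sigma$-algebra, giving a $\Gamma$-equivariant factor $B \to B/N$. One then invokes the \emph{Factor Theorem} of Bader--Shalom: every $\Gamma$-factor of $B_1 \times B_2$ has the form $C_1 \times C_2$ with $C_i$ a $G_i$-factor of $B_i$. Applying this to $B/N = C_1 \times C_2$: the group $N$ acts trivially on $C_1 \times C_2$, and since the $G_i$-action on $C_i$ is continuous, $\overline{p_i(N)}$ --- and hence $M_i$ --- acts trivially on $C_i$; but a $(G_i, \mu_i)$-boundary on which $M_i$ acts trivially factors through the compact group $G_i/M_i$, whose $\mu$-boundaries are trivial, so $C_i$ is a point. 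Therefore $B/N$ is a point, i.e. $N$ acts ergodically on $B$; equivalently the bounded $\bar\mu$-harmonic functions on $\Gamma/N$ are constant, so the Poisson boundary of $(\Gamma/N, \bar\mu)$ is trivial and $\Gamma/N$ is amenable. Finally, an infinite amenable quotient would make $N$ co-amenable in $\Gamma$ and would yield a $G_i$-invariant probability measure on the non-trivial $\mu_i$-proximal boundary $B_i$, contradicting non-amenability of $M_i$; hence $\Gamma/N$ is finite, completing the proof.

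The decisive difficulty is the Factor Theorem, i.e. the intermediate factor theorem for $\Gamma$-factors of a product of boundaries: this is a substantial measure-rigidity statement, proved in \cite{BaSha} through a delicate analysis of $\mu$-stationary measures and relative isometric extensions of measure-preserving actions, and it is essentially the only point where the product structure together with the lattice property enters in a non-formal way. The remaining ingredients --- the finite-index reduction, the dichotomy for $\overline{p_i(N)}$, and the passage from ergodicity on the boundary to amenability of the quotient --- are comparatively routine, and the translation of the tree-group input of \cite{BuMo2} into the abstract monolith hypothesis used here is precisely what makes the statement convenient to cite in our context.
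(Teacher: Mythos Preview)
The paper does not supply a proof of this theorem: it is quoted from \cite{BaSha} and used as a black box in the proof of Corollary~\ref{cor:ji-crit}. So there is nothing in the paper to compare your sketch against beyond the bare citation.

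Your outline is largely faithful to the Bader--Shalom strategy: the reduction from ``hereditarily just-infinite'' to ``just-infinite'' via finite-index subgroups (using that $M_i$, being infinite topologically simple, has no proper open subgroup of finite index), the dichotomy forcing $\overline{p_i(N)} \supseteq M_i$, and the use of the Intermediate Factor Theorem to show that $N$ acts ergodically on the Poisson boundary, hence that $\Gamma/N$ is amenable, are all correct in spirit and constitute the ``amenability half'' of the Normal Subgroup Theorem.

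The substantive gap is your final step, the passage from ``$\Gamma/N$ amenable'' to ``$\Gamma/N$ finite''. The mechanism you propose --- co-amenability of $N$ in $\Gamma$ producing a $G_i$-invariant probability measure on the $\mu_i$-proximal boundary $B_i$ --- does not work as stated: co-amenability gives a $\Gamma$-invariant mean on $\ell^\infty(\Gamma/N)$, which has no direct bearing on measures on $B_i$ (and $B_i$ is not even a $\Gamma/N$-space, since $N$ does not act trivially on it). This ``property~(T) half'' of the theorem is an independent and non-trivial ingredient in \cite{BaSha}, established via Shalom's superrigidity for unitary representations and reduced $1$-cohomology of irreducible square-integrable lattices in products; it is of comparable depth to the Factor Theorem and should not be described as ``comparatively routine''.
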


In the context of groups acting on trees, locally compact groups satisfying the conditions appearing in Theorem~\ref{thm:BaSha} pop up naturally. This is illustrated by the following result of Burger--Mozes and Nebbia. 

\begin{thm}[{Burger--Mozes \cite{BuMo1}, Nebbia \cite{Nebbia}}] \label{thm:BuMo}
Let $T$ be a locally finite tree, all of whose vertices have degree~$\geq 3$. Let also $G \leq \Aut(T)$ be a closed subgroup. If the $G$-action on the set of ends  $\partial T$ of $T$ is $2$-transitive, then $G$ is compactly generated,  the intersection $M$ of all non-identity closed normal subgroups of $G$ is topologically simple,  and   the quotient $G/M$ is compact. Moreover the $M$-action on $\partial T$ is $2$-transitive. 
\end{thm}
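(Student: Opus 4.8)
The plan is to deduce Theorem~\ref{thm:BuMo} from the structure theory of closed groups acting on trees developed by Burger--Mozes \cite{BuMo1}, together with a double-transitivity argument on the boundary. First I would record the standard dictionary between the $G$-action on $\partial T$ and the $G$-action on $T$: since $T$ is locally finite with all degrees $\geq 3$, the action of $G$ on $\partial T$ being $2$-transitive forces $G$ to act with at most two orbits on $VT$ and at least transitively on the geometric edges (indeed a group acting $2$-transitively on ends is transitive on ordered pairs of distinct ends, hence on geodesic lines, hence on oriented edges), and in particular $G$ contains hyperbolic elements and does not fix any end or any proper subtree. From transitivity on geometric edges and local finiteness one gets that $G$ is cocompact on $T$, hence compactly generated (a fundamental domain for the edge action is finite, and the finitely many elements carrying it around, together with a compact open vertex stabilizer, generate $G$).

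Next I would invoke the non-degeneracy just established to apply the normal subgroup structure theorem of Burger--Mozes for closed non-discrete groups acting on trees: a closed subgroup of $\Aut(T)$ that is cocompact, does not stabilize a proper subtree, and does not fix an end, admits a unique minimal closed normal subgroup, which is topologically simple, cocompact, and itself without fixed end or invariant proper subtree. Concretely, let $M=\cap\{N \trianglelefteq G \text{ closed}, N\neq\triv\}$. The key point is that this intersection is non-trivial: if $N_1,N_2$ are non-trivial closed normal subgroups, each must contain a hyperbolic element (an infinite normal subgroup of a cocompact tree group with no global fixed end cannot be elliptic-by-compact), and the translation axes of conjugates of a hyperbolic element in $N_i$ sweep out all of $T$ because $G$ is edge-transitive; this forces $N_1$ and $N_2$ to have overlapping "local action" and ultimately $N_1\cap N_2\neq\triv$ by a standard commutator/local-action argument. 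Hence the family of non-trivial closed normal subgroups is a filter and $M\neq\triv$; $M$ is then topologically simple by minimality, and $G/M$ is compact because $M$ is already cocompact in $\Aut(T)$ (being edge-transitive — one shows $M$ is transitive on geometric edges by the same sweeping argument, using that $M_v$ is a non-trivial normal subgroup of the $2$-transitive local action $G_v/G_v^{[1]}$, hence transitive on $E(v)$).

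It remains to check that the $M$-action on $\partial T$ is $2$-transitive. Since $M$ is edge-transitive on $T$, it is transitive on ordered pairs of adjacent vertices, hence on the set of geodesic lines through a fixed edge; combined with the fact that the local action of $M$ at every vertex contains a point stabilizer that is transitive on the remaining edges (because the local action of $G$ is $2$-transitive and $M_v$ maps onto a non-trivial, hence transitive, normal subgroup of it — and then $M_{v}^{[1]}$ surjects onto a transitive normal subgroup of the point stabilizer of the local action, which is itself transitive on the remaining directions since the local action is $2$-transitive), one propagates transitivity along geodesics to conclude that $M$ acts transitively on the set of pairs of distinct ends, i.e. $2$-transitively on $\partial T$.

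The main obstacle I anticipate is the proof that $M\neq\triv$ and that $M$ is edge-transitive — i.e. that the "sweeping by axes of normal conjugates" argument really shows any two non-trivial closed normal subgroups intersect non-trivially. This is precisely the technical heart of the Burger--Mozes normal-subgroup theorem \cite[Proposition~1.2.1 and Theorem~1.2.2]{BuMo1}, resting on the local-to-global analysis of the local actions $N_v/N_v^{[1]}$ and on the fact that a closed subgroup of $\Aut(T)$ acting cocompactly without fixed end contains plentiful hyperbolic elements with prescribed behaviour; I would quote this structure theorem rather than reprove it, and the rest of the argument is the bookkeeping indicated above. The other point requiring a little care is the passage between "$2$-transitive on $\partial T$" and "edge-transitive on $T$ with $2$-transitive local action", where one must be slightly careful about the possible bipartition of $VT$ (two vertex orbits), but this does not affect the edge-transitivity or the conclusions about $M$.
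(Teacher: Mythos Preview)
Your proposal is correct and takes essentially the same approach as the paper: both defer to the Burger--Mozes structure theory in \cite{BuMo1}, and the paper's own proof is nothing more than a pointer to \cite{BuMo1} together with the reference \cite[Proposition~2.1 and Theorem~2.2]{CDM} where the assembly is carried out explicitly. Your sketch is a reasonable expansion of that deferral; the one step you use without justification---that $2$-transitivity on $\partial T$ forces the local action of $G$ to be $2$-transitive---is indeed true (the divergence point of the geodesic between two ends is fixed by any element fixing those ends, which gives transitivity of $G_{v,w}$ on $E(v)\setminus\{e\}$), and is implicit in the cited sources.
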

\begin{proof}
This follows by combining several results from \cite{BuMo1} (see also \cite{Nebbia} for the case of regular trees). Details can be found in \cite[Proposition~2.1 and Theorem~2.2]{CDM}.
\end{proof}

A fundamental idea  of  Burger--Mozes is that, given a tree $T$ and a vertex-transitive group $G \leq \Aut(T)$, if $G$ is non-discrete and the local action of $G$ on $T$ is a suitable $2$-transitive group, then the closure $\overline G$ is $2$-transitive on $\partial T$ (see \cite[\S3.3]{BuMo1}), so that $\overline G$ is subject to Theorem~\ref{thm:BuMo}. The $2$-transitive groups considered by Burger--Mozes are those with almost simple (or quasi-simple) point stabilizers. In particular, their original arguments do not apply to $2$-transitive groups of degree~$\leq 5$. However, a similar local-to-global phenomenon can also be extracted from the work of V.~Trofimov. The following result, due to him, applies to numerous $2$-transitive local actions whose point stabilizers need not be almost simple. 

\begin{thm}[{V.~Trofimov \cite[Proposition~3.1]{Tro}}] \label{thm:Tro}
Let $X$ be a locally finite $d$-regular graph and $G \leq \Aut(X)$ be a vertex-transitive group whose local action on $X$ is the $2$-transitive group $F \leq \Sym(d)$. Assume that  every subnormal subgroup $S$ of the point stabilizer $F_1$, for which the index $|F_1 : \mathrm N_{F_1}(S)|$ divides a power of $d-1$, acts transitively on $\{2, \dots, d\}$. If $G$ is non-discrete, then $X$ is a tree and the closure $\overline G$ is $2$-transitive on the set of ends $\partial X$. 

Moreover, the above condition is satisfied if $d = q+1$ and $F$ contains a normal subgroup isomorphic to $\PSL_2(\FF_q)$, or if $F \geq \Alt(d)$. 
\end{thm}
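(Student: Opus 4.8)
The plan is to reduce at once to the case where $G$ is closed in $\Aut(X)$: passing to $\overline{G}$ changes neither the local action $F$ (the continuous homomorphism $\overline{G}_v\to\Sym(E(v))$ has finite image, which therefore agrees with the image of the dense subgroup $G_v$) nor the property of being discrete, and since $G$ is dense in $\overline{G}$ the closure is $2$-transitive on $\partial X$ exactly when $G$ is. So assume $G$ closed. Then $G_v$ is profinite, the subgroups $G_v^{[m]}$ form a basis of neighbourhoods of $1$, and $\bigcap_m G_v^{[m]}=\triv$ because $G$ acts faithfully. As $G$ is non-discrete, $G_v$ is infinite, and hence $G_v^{[m]}\neq\triv$ for every $m$ and every vertex $v$ (otherwise $G_v$ would embed in the finite automorphism group of the ball $B(v,m)$, forcing $G$ to be discrete).

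The heart of the argument is to prove, by induction on $n$, that \textbf{$G$ acts transitively on the set of non-backtracking paths of length $n$ in $X$}. For $n\le 2$ this follows directly from the $2$-transitivity of $F$: it makes $G$ transitive on directed edges, and makes the image of the edge stabiliser $G_{v,w}$ in the local action at $w$ equal to the full point stabiliser $F_1$, which is transitive on the $d-1$ directions at $w$ other than the one towards $v$. For the inductive step, fix a non-backtracking path $q=(v_0,\dots,v_{n-1})$ of length $n-1$ and put $A(q):=G_{v_0}^{[1]}\cap\dots\cap G_{v_{n-2}}^{[1]}$, a subgroup of $G_{v_{n-1},v_{n-2}}$. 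Fibring the length-$n$ paths over the length-$(n-1)$ paths and using the inductive hypothesis, the claim for $n$ reduces to showing that the image $\overline{A(q)}$ of $A(q)$ in the local action $F$ at $v_{n-1}$ is transitive on the $d-1$ directions at $v_{n-1}$ distinct from the one towards $v_{n-2}$; identifying that direction with the point $1$, this says $\overline{A(q)}\le F_1$ is transitive on $\{2,\dots,d\}$, which is precisely what the hypothesis on $F_1$ delivers provided that (a) $\overline{A(q)}$ is \emph{subnormal} in $F_1$, (b) $|F_1:\mathrm{N}_{F_1}(\overline{A(q)})|$ divides a power of $d-1$, and (c) $\overline{A(q)}\neq\triv$.

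For (a), I would exhibit the subnormal chain $A(q)=C_0\trianglelefteq C_1\trianglelefteq\dots\trianglelefteq C_{n-2}=G_{v_{n-2}}^{[1]}\trianglelefteq G_{v_{n-2},v_{n-1}}$ with $C_k:=\bigcap_{j=k}^{n-2}G_{v_j}^{[1]}$ (normality of $C_k$ in $C_{k+1}$ uses that $C_{k+1}$ fixes $v_k$ and that $G_{v_k}^{[1]}\trianglelefteq G_{v_k}$); all these groups lie in $G_{v_{n-1}}$, so passing to images in $F$ gives $\overline{A(q)}\trianglelefteq\dots\trianglelefteq F_1$. For (b), the inductive hypothesis makes $G_{v_{n-1}}$ transitive on length-$(n-1)$ paths from $v_{n-1}$, hence $G_{v_{n-1},v_{n-2}}$ transitive on the $(d-1)^{n-2}$ such paths whose second vertex is $v_{n-2}$; since $gA(q)g^{-1}=A(gq)$ for $g\in G_{v_{n-1},v_{n-2}}$, the conjugacy class of $A(q)$ in $G_{v_{n-1},v_{n-2}}$ has size dividing $(d-1)^{n-2}$, and this bound descends to the normaliser index of $\overline{A(q)}$ in $F_1$. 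For (c), which is where non-discreteness genuinely enters, pick $1\neq g\in G_{v_0}^{[n-1]}$ and let $r>n-1$ be the distance from $v_0$ to the nearest vertex moved by $g$; along a geodesic from $v_0$ to that vertex, $g$ belongs to the group $A(\cdot)$ attached to the length-$(r-1)$ initial segment and moves a direction at its far endpoint, so $\overline{A(\cdot)}\neq\triv$ for some path of length $r-1\ge n-1$; deleting initial vertices of a path only enlarges $\overline{A(\cdot)}$, giving non-triviality for some length-$(n-1)$ path, whence for all of them by the inductive transitivity. Granting (a)–(c), the hypothesis on $F_1$ makes $\overline{A(q)}$ transitive on $\{2,\dots,d\}$, closing the induction.

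Transitivity on non-backtracking paths of all lengths forces $X$ to be a tree — a finite girth $g$ would produce a \emph{closed} non-backtracking path of length $g$ (a shortest cycle) while a generic one of length $g$ is not closed, contradicting transitivity — and then a compactness argument together with the $2$-transitivity of the local action upgrades transitivity on finite geodesics to $2$-transitivity of $G$ on $\partial X$ (match two ordered pairs of distinct ends up to their first branch vertex, move the two outgoing directions there by the local $2$-transitivity, and extend inward by the transitivity already proved). The \emph{moreover} clause is a finite check: for $F\ge\Alt(d)$ the point stabiliser $\Alt(d-1)$ is simple when $d-1\ge 5$, so its only non-trivial subnormal subgroup is itself, which is transitive, and the few remaining small degrees are done by inspection; for $F$ containing $\PSL_2(\FF_q)$ normally, $F_1$ is essentially a Borel $U\rtimes T$ with $U\cong(\FF_q,+)$ acting regularly on the $d-1=q$ remaining points, and one shows that any subnormal subgroup of $F_1$ either contains $U$ (hence is transitive) or lies in $U$ and, if its normaliser has index prime to the characteristic, is $T$-invariant, so equals $\triv$ or $U$ by (near-)irreducibility of the torus action. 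The \textbf{main obstacle} I anticipate is the interlocking bookkeeping of the inductive step, especially point (c): non-discreteness must be exploited there in a non-formal way and combined precisely with the normaliser-index estimate of (b), so that a genuine \emph{power of $d-1$}, rather than a mere upper bound, is fed into the hypothesis on $F_1$.
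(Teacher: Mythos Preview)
Your proposal is correct and, as far as one can tell, reconstructs Trofimov's own argument. The paper does not attempt this: its proof simply cites \cite[Proposition~3.1]{Tro} for the tree conclusion, remarks that $2$-transitivity on $\partial X$ is implicit in Trofimov's proof, defers the $\PSL_2(\FF_q)$ verification to \cite[Example~3.2]{Tro}, and handles $F \ge \Alt(d)$ by the simplicity of $\Alt(d-1)$ for $d\ge 6$ together with Table~\ref{tab:Small2Trans} for the small degrees. So you are supplying precisely what the paper outsources.

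A few small points. In (b), your divisibility is genuine (not just an inequality): the path-stabiliser is contained in the normaliser of $A(q)$, so $|G_{v_{n-1},v_{n-2}}:\mathrm N(A(q))|$ divides the orbit size $(d-1)^{n-2}$, and this descends to $F_1$ since $G_{v_{n-1},v_{n-2}}\to F_1$ is surjective. In (c), note that a shortest path to the nearest moved vertex is automatically non-backtracking in any graph, so no tree hypothesis is smuggled in. In your $\PSL_2(\FF_q)$ sketch, the phrase ``index prime to the characteristic'' is backwards: the point is that for $S\le U$ one has $U\le \mathrm N_B(S)$, so the index is automatically coprime to $p$, while the hypothesis forces it to be a $p$-power; hence it equals $1$, so $S$ is $T$-invariant and then irreducibility of the $\FF_p[T]$-module $U\cong\FF_q$ (every nonzero $T$-invariant additive subgroup is all of $\FF_q$) finishes. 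Finally, the subnormal condition must implicitly exclude $S=\triv$ (which satisfies the index condition but is not transitive); your step (c) is exactly where this is addressed.
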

\begin{proof}
The statement of \cite[Proposition~3.1]{Tro} ensures that $X$ is a tree. Although he does not write it explicitly, Trofimov's proof actually also shows that $\overline G$ is $2$-transitive on   $\partial X$. The fact that the condition holds in the case $d=q+1$ and $\PSL_2(\FF_q) \lhd F$ is explained in \cite[Example~3.2]{Tro}. If $F \geq \Alt(d)$ with $d \geq 6$, the condition is clearly satisfied since $\Alt(d-1)$ is simple. For $d \leq 5$, it follows from the preceding case (see Table~\ref{tab:Small2Trans}). 
\end{proof}

Combining the three theorems above, we obtain the following result. 

\begin{cor}\label{cor:ji-crit}
Let $d_1, d_2 \geq 3$, let $T_1, T_2$ be regular trees of degree $d_1, d_2$ and let $\Gamma \leq \Aut(T_1) \times \Aut(T_2)$ be a discrete subgroup acting transitively on $VT_1 \times VT_2$. Assume that for $i=1, 2$, the local action $F_i$ of $\Gamma$ on $T_i$ satisfies the condition in Theorem~\ref{thm:Tro}. If  $\Gamma$  is irreducible, then it is hereditarily just-infinite. 
\end{cor}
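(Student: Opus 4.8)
The plan is to reduce the statement to Theorem~\ref{thm:BaSha} by taking closures of the projections of $\Gamma$ in each tree automorphism group, and then to verify the three hypotheses of that theorem using Theorem~\ref{thm:BuMo} and Theorem~\ref{thm:Tro}. First I would set up notation: let $\pi_i \colon \Aut(T_1) \times \Aut(T_2) \to \Aut(T_i)$ be the coordinate projections, and let $G_i = \overline{\pi_i(\Gamma)}$ be the closure of the image of $\Gamma$ in $\Aut(T_i)$. Since $\Gamma$ is discrete and acts cocompactly on $T_1 \times T_2$ (it acts transitively on vertices, with finite stabilizers by discreteness), $\Gamma$ is a cocompact lattice in $G_1 \times G_2$, and by construction the projection of $\Gamma$ to each $G_i$ has dense image. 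So the abstract framework of Theorem~\ref{thm:BaSha} applies, with $G_i$ compactly generated locally compact, provided I can check the topological-simplicity-modulo-compact condition on each $G_i$.

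The key step is to show that for $i=1,2$, the closed subgroup $G_i \leq \Aut(T_i)$ acts $2$-transitively on $\partial T_i$; once this is established, Theorem~\ref{thm:BuMo} immediately gives that $G_i$ is compactly generated, that the intersection $M_i$ of all non-identity closed normal subgroups of $G_i$ is topologically simple, and that $G_i/M_i$ is compact --- precisely the hypotheses needed to invoke Theorem~\ref{thm:BaSha} and conclude that $\Gamma$ is hereditarily just-infinite. To prove this $2$-transitivity on ends, I distinguish two cases according to whether $G_i$ is discrete. If $G_i$ is non-discrete, then since $\pi_i(\Gamma)$ (hence $G_i$) is vertex-transitive on $T_i$ with local action $F_i$ satisfying the hypothesis of Theorem~\ref{thm:Tro}, that theorem directly yields that $\overline{G_i} = G_i$ is $2$-transitive on $\partial T_i$, as desired.

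It remains to rule out (or handle) the case where $G_i = \overline{\pi_i(\Gamma)}$ is discrete for some $i$. The main obstacle here is that discreteness of $G_i$ means $\pi_i(\Gamma)$ is itself a discrete (hence finitely generated free-by-finite, as it acts on a tree) group, and one must argue this forces $\Gamma$ to be reducible, contradicting the hypothesis. Concretely: if $\pi_i(\Gamma)$ is discrete, then the kernel $N_i = \ker(\pi_i) \cap \Gamma$ of the $\Gamma$-action on $T_i$ is a lattice in $\Aut(T_{3-i})$ (it has finite covolume there because $\Gamma$ does in the product and the image is discrete), so $N_i$ acts cocompactly on $T_{3-i}$; moreover $N_i$ is normal in $\Gamma$, and $\Gamma/N_i \cong \pi_i(\Gamma)$ acts on $T_i$. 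One then shows, using that $\Gamma$ acts transitively on $VT_1 \times VT_2$ and the normal subgroup structure (invoking the analysis of Lemma~\ref{lem:BuMo:LocallyQuasiPrimitive} / Corollary~\ref{cor:BuMo} applied to $N_i \trianglelefteq \Gamma$, together with the local $2$-transitivity which is quasi-primitive), that $N_i$ is cocompact in $\Aut(T_{3-i})$-coordinate and that a suitable finite-index subgroup of $\Gamma$ splits as $K_1 \times K_2$ with each $K_j$ free and cocompact on $T_j$ --- i.e. $\Gamma$ is reducible. This contradicts the hypothesis that $\Gamma$ is irreducible, so $G_i$ must be non-discrete for both $i$, completing the reduction. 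I expect this discreteness-implies-reducibility step to be the technical heart of the argument, and it is essentially the observation underlying the Burger--Mozes normal subgroup structure theory; everything else is assembly of the cited black boxes.
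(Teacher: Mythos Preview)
Your proposal is correct and follows essentially the same route as the paper: set $G_i=\overline{\pi_i(\Gamma)}$, argue that $G_i$ is non-discrete, invoke Theorem~\ref{thm:Tro} to get $2$-transitivity on $\partial T_i$, then Theorem~\ref{thm:BuMo} to verify the structural hypotheses of Theorem~\ref{thm:BaSha}, and conclude. The only difference is in the step ``discrete projection $\Rightarrow$ reducible'': the paper simply cites \cite[Proposition~1.2]{BuMo2}, whereas you sketch an argument. Your sketch is more elaborate than needed---the invocation of Lemma~\ref{lem:BuMo:LocallyQuasiPrimitive}/Corollary~\ref{cor:BuMo} is unnecessary, since a direct covolume argument (if $\pi_i(\Gamma)$ is discrete then $\ker\pi_i|_\Gamma$ is a cocompact lattice in $\Aut(T_{3-i})$, and passing to a finite-index subgroup acting freely on each factor yields a splitting) already does the job---but it is not wrong.
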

\begin{proof}
Since $\Gamma$ is irreducible, its   projection  $p_i \colon \Gamma \to \Aut(T_i)$ has a non-discrete image for $i=1$ and $2$ by \cite[Proposition~1.2]{BuMo2}. Let $G_i = \overline{p_i(\Gamma)}$. By Theorem~\ref{thm:Tro}, the   $G_i$-action on $\partial T_i$ is $2$-transitive. Thus Theorem~\ref{thm:BuMo} ensures that $G_1$ and $G_2$ satisfy the hypotheses of Theorem~\ref{thm:BaSha}. The conclusion follows. 
\end{proof}	

Corollary~\ref{cor:HJI} is an immediate consequence of Corollary~\ref{cor:ji-crit} (see Table~\ref{tab:Small2Trans}), recalling that a subgroup $\Gamma \leq \Aut(T_1) \times \Aut(T_2)$ is discrete if and only if the stabilizer $\Gamma_{(v_1, v_2)}$ of a vertex $(v_1, v_2) \in VT_1 \times VT_2$ is finite.


\providecommand{\bysame}{\leavevmode\hbox to3em{\hrulefill}\thinspace}
\providecommand{\MR}{\relax\ifhmode\unskip\space\fi MR }
\providecommand{\MRhref}[2]{%
  \href{http://www.ams.org/mathscinet-getitem?mr=#1}{#2}
}
\providecommand{\href}[2]{#2}

\end{document}